\numberwithin{equation}{section}
\newtheorem{definition}{Definition}
\newtheorem{proposition}{Proposition}
\newtheorem{theorem}{Theorem}
\newtheorem{lemma}{Lemma}
\newtheorem{corollary}{Corollary}
\newtheorem{remark}{Remark}
\newtheorem{example}{Example}
\newtheorem{conjecture}{Conjecture}
\newtheorem*{thma}{Theorem A}
\newtheorem*{thmb}{Theorem B}
\newtheorem*{thmc}{Theorem C}
\newcommand{\g}{\mathfrak{g}}
\newcommand{\ra}{\rightarrow}
\newcommand{\ve}{\varepsilon}
\newcommand{\vp}{\varphi}
\newcommand{\ts}{\otimes}
\newcommand{\ff}{{\rm Fl}}
\newcommand{\Hom}{\operatorname*{Hom}}
\newcommand{\SL}{\operatorname*{SL}}
\newcommand{\GL}{\operatorname*{GL}}
\newcommand{\Gr}{\operatorname*{Gr}}
\newcommand{\lie}{\mathfrak}
\newcommand{\sspan}{\operatorname*{span}}
\newcommand{\id}{\operatorname*{id}}
\newcommand{\pr}{{\operatorname*{pr}}}
\newcommand{\bi}{\mathbf{i}}
\newcommand{\Leq}{{\leq_{\textrm{deg}}}}
\begin{document}

\title{Linear degenerations of flag varieties}
\author{G. Cerulli Irelli}
\address{Giovanni Cerulli Irelli:\newline
Dipartimento  di  Matematica  "G.  Castelnuovo",  Sapienza Universit\'a di Roma, Piazzale Aldo Moro 5, 00185, Roma, Italy}
\email{cerulli.math@googlemail.com} 
\author{X. Fang} 
\address{Xin Fang:\newline
University of Cologne, Mathematical Institute, Weyertal 86--90, 50931 Cologne, Germany}
\email{xinfang.math@gmail.com}
\author{E. Feigin} 
\address{Evgeny Feigin:\newline National Research University Higher School of Economics, Department of Mathematics, 
Vavilova str. 7, 117312, Moscow, Russia, {\it and } Tamm Theory Division, Lebedev Physics Institute}
\email{evgfeig@gmail.com}
\author{G. Fourier} 
\address{Ghislain Fourier:\newline
Leibniz University Hannover, Institute for Algebra, Number Theory and Discrete Mathematics, Welfengarten 1, 30167 Hannover, Germany}
\email{fourier@math.uni-hannover.de}
\author{M. Reineke}
\address{Markus Reineke:\newline
Ruhr-Universit\"at Bochum, Faculty of Mathematics, Universit\"atsstra{\ss}e 150, 44780 Bochum, Germany}
\email{markus.reineke@rub.de}

\begin{abstract}
Linear degenerate flag varieties are degenerations of flag varieties as quiver Grassmannians. For type $A$ flag varieties, we obtain characterizations of flatness, irreducibility and normality of these degenerations via rank tuples. Some of them are shown to be isomorphic to Schubert varieties and 
can be realized as highest weight orbits of partially degenerate Lie algebras, generalizing the corresponding results on degenerate flag varieties. To study normality, cell decompositions of quiver Grassmannians are constructed in a wider context of equioriented quivers of type $A$. 
\end{abstract}
\maketitle

\section{Introduction}
Let $B$ be a Borel subgroup in the group $\SL_{n+1}$. The flag variety $\SL_{n+1}/B$ has an explicit realization in linear algebra terms. 
Namely, let $V$ be an $n+1$-dimensional vector space. Then $\SL_{n+1}/B$ is isomorphic to the variety of collections $V_1,\dots,V_n$ of
subspaces of $V$, such that $V_i\subset V_{i+1}$ and $\dim V_i=i$. One can think of $V_i$ as sitting inside its own copy of $V$.
Let us denote the identity maps ${\rm id}:V\to V$ by $f_i$. Then a point in the flag variety is a collection of subspaces $V_i\subset V$ such that 
$\dim V_i=i$ and $f_iV_i\subset V_{i+1}$.  
This construction can be generalized in a very straightforward way: namely, we allow the $f_i$ to be arbitrary linear maps from $V$ to $V$.     
We denote the resulting variety by  ${\rm Fl}^{f_*}(V)$, where $f_*$ is the collection of maps $f_i$. 
The varieties ${\rm Fl}^{f_*}(V)$ can be naturally seen as degenerations of the classical flag variety $\SL_{n+1}/B$ (which corresponds to $f_i={\rm id}$);
we thus call ${\rm Fl}^{f_*}(V)$ the $f_*$-linear degenerate flag variety. 

Varying $f_*$, one can glue the varieties ${\rm Fl}^{f_*}(V)$ together into a universal object $Y$. By definition, there is a map $\pi$ from $Y$ 
to the parameter space $R$ of all possible $f_*$
(this is nothing but the product of $n-1$ copies of the space of linear endomorphisms from $V$ to $V$). We call $Y$ the universal linear degeneration  
of the flag variety. The main goal of the paper is to study the variety $Y$, the map $\pi:Y\to R$ and the fibers of the map $\pi$.

Our motivation comes from several different sources of representation theory and algebraic geometry. In \cite{Feigin2} the PBW degenerations $(G/B)^a$
of the classical flag varieties $G/B$ were constructed. The construction is of Lie-theoretic nature and works for arbitrary Lie groups. More precisely, one starts with 
an irreducible representation of $G$ and, using the PBW filtration on it, constructs the associated graded space. Then the degenerate flag variety
$(G/B)^a$ is the orbit closure of an abelian additive group acting on the projectivization of the PBW graded representation. Being applied to 
the case of $SL_{n+1}$, the construction produces the variety ${\rm Fl}^{f_*}(V)$ with all $f_i$ being corank one maps whose kernels are 
linear independent. It has been observed in \cite{CFR1} that methods of the theory of quiver Grassmannians can be used in order to study the 
properties of the PBW degenerations. Moreover, in \cite{CFR1} a family of well-behaved quiver Grassmannians was defined; these projective algebraic
varieties share many nice properties with the PBW degenerate flag varieties. Finally, in \cite{CL} the authors have identified the degenerate
flag varieties $(G/B)^a$ in types $A$ and $C$ with certain Schubert varieties (for larger rank groups). So for special values of $f_*$
the varieties  ${\rm Fl}^{f_*}(V)$ have nice geometric properties and many rich connections to structures of Lie theory and representation theory of quivers.
It is thus very reasonable to ask whether one can describe and study the  $f_*$-linear degenerate flag varieties for other $f_*$ and the global
(universal) degeneration $Y$.

We note that the parameter space $R$ is naturally
acted upon by the group $\GL(V)^n$. It is easy to see that the varieties ${\rm Fl}^{f_*}(V)$ and ${\rm Fl}^{g_*}(V)$ are isomorphic if $f_*$ and $g_*$ belong to
the same group orbit. The orbits of $\GL(V)^n$ are parametrized by tuples ${\bf r}=(r_{i,j})_{i<j}$ of ranks of the composite maps $f_{j-1}\circ\dots\circ f_i$.
We denote the orbit corresponding to the rank tuple ${\bf r}$ by  $\mathcal{O}_{{\bf r}}\subset R$. 
For example, the rank tuple ${\bf r}^0$ is defined by $r^0_{i,j}=n+1$; the corresponding $f_*$-degenerate flag varieties are isomorphic to the classical flag variety $\SL_{n+1}/B$.
We note that the orbit  $\mathcal{O}_{{\bf r}^0}$ degenerates to any other orbit $\mathcal{O}_{{\bf r}}$.   
The main result of the paper is the description of the following diagram of subsets of $R$:
\bigskip
\begin{figure}[h]
\begin{picture}(200,200)
\put(100,0){$\line(1,1){100}$}
\put(100,200){$\line(1,-1){100}$}
\put(0,100){$\line(1,1){100}$}
\put(0,100){$\line(1,-1){100}$}
\put(25,125){$\line(1,-1){75}$}
\put(100,50){$\line(1,1){75}$}
\put(50,150){$\line(1,-1){50}$}
\put(100,100){$\line(1,1){50}$}
\put(100,200){$\line(1,-2){25}$}
\put(100,200){$\line(-1,-2){25}$}
\put(75,150){$\line(1,-2){25}$}
\put(125,150){$\line(-1,-2){25}$}
\put(0,30){$\scalebox{1}{$R$}$}
\put(15,40){$\line(1,1){30}$}
\put(0,160){$\scalebox{1}{$U_{\rm flat}$}$}
\put(15,155){$\line(1,-1){30}$}
\put(30,190){$\scalebox{1}{$U_{\rm flat, irr}$}$}
\put(45,185){$\line(1,-1){25}$}
\put(160,170){$\scalebox{1}{$U_{\rm PBW}$}$}
\put(155,170){$\line(-1,0){50}$}
\put(95,202){${\bf r}^0$}
\put(95,40){${\bf r}^2$}
\put(95,90){${\bf r}^1$}
\put(90,-10){${\bf r} = 0$}
\put(160,40){inclusions of subsets:} 
\put(160, 20){$U_{PBW}\subset U_{\rm flat,irr}\subset U_{\rm flat}\subset R$}
\end{picture}
\end{figure}

\bigskip
\par
The sets $U_{\bullet}$ are defined as follows:
\begin{itemize}
\item $U_{\rm flat}\subset R$ is the subset of $R$ over which $\pi$ is flat;
\item $U_{\rm flat, irr}\subset R$ is the subset of $R$ over which $\pi$ is flat with irreducible fibers;
\item $U_{\rm PBW}\subset R$ is the subset of $R$ where the kernels of the maps $f_i$ are at most one--dimensional and linearly independent. 
\end{itemize}
Each subset is a union of several $\GL(V)^n$ orbits; the properties of the $f_*$-degenerate flag variety strongly depend on the set $f_*$ belongs to.  
Let ${\bf r}^2$ be the rank tuple such that ${\bf r}^2_{i,j}=n-j+i$. 
Our first theorem gives a description of the largest set $U_{\rm flat}$. 

\begin{thma}
\begin{itemize}
\item[a)]
$U_{\rm flat}$ is the union of all orbits degenerating to $\mathcal{O}_{{\bf r}^2}$. 
\item[b)]
Over $U_{\rm flat}$ all fibers are reduced locally complete intersection varieties admitting a cellular decomposition.  
\item[c)]
the number of irreducible 
components of the fiber over a point of $\mathcal{O}_{{\bf r}^2}$ is equal to the $n$-th Catalan number. 
\end{itemize}
\end{thma}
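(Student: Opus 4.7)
The plan is to identify the fibers of $\pi$ with quiver Grassmannians for the equioriented type $A_n$ quiver: for $f_* \in \mathcal{O}_{\bf r}$, the fiber ${\rm Fl}^{f_*}(V)$ is isomorphic to $\Gr_{\bf e}(M_{\bf r})$, where ${\bf e} = (1, 2, \ldots, n)$ and $M_{\bf r}$ is the representation of $A_n$ with vertex spaces all equal to $V$ and arrow maps given by $f_*$. The isomorphism class of $M_{\bf r}$ depends only on ${\bf r}$, through the multiplicities of its indecomposable interval summands, and the Ringel form evaluates as $\langle {\bf e}, (n+1,\ldots,n+1) - {\bf e}\rangle = \binom{n+1}{2}$, matching $\dim \SL_{n+1}/B$.

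For Part (a), openness of the flat locus together with projectivity of $\pi$ reduces the statement to the dimension equality $\dim \Gr_{\bf e}(M_{\bf r}) = \binom{n+1}{2}$ holding exactly when ${\bf r} \geq {\bf r}^2$ componentwise. For the forward direction, I would produce, at the minimal orbit $\mathcal{O}_{{\bf r}^2}$, an explicit subrepresentation $N \subset M_{{\bf r}^2}$ of dimension ${\bf e}$ at which $\Ext^1(N, M_{{\bf r}^2}/N)$ vanishes; standard tangent-space theory then gives smoothness at $N$ with tangent dimension equal to the expected value $\binom{n+1}{2}$, and upper semicontinuity of fiber dimension propagates this to all orbits with larger rank tuple. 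For the converse, if $r_{i,j} < n-j+i$ for some pair $i<j$, I would show via a stratification of $\Gr_{\bf e}(M_{\bf r})$ by isomorphism types of subrepresentations that at least one stratum has dimension strictly exceeding $\binom{n+1}{2}$; this relies on identifying the additional indecomposable summands of $M_{\bf r}$ that appear as a consequence of the failed inequality and verifying that they contribute extra moduli on the quiver Grassmannian side.

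For Part (b), the locally complete intersection property is obtained by realizing $\Gr_{\bf e}(M_{\bf r})$ as the zero locus of natural sections of incidence bundles inside $\prod_{i=1}^{n} \Gr_i(V)$, the number of defining equations matching the codimension thanks to the dimension equality of Part (a). Reducedness then follows from the locally complete intersection property combined with the generic smoothness established above. The cellular decomposition is obtained via a one-parameter torus action on $V$ with pairwise distinct eigenvalues, whose induced action on the quiver Grassmannian has fixed points given by coordinate subrepresentations; a Bia\l ynicki--Birula analysis yields affine cells. This invokes the general theorem on cellular decompositions of equioriented $A_n$ quiver Grassmannians that the paper establishes separately.

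For Part (c), the irreducible components of $\Gr_{\bf e}(M_{{\bf r}^2})$ correspond to the isomorphism types of generic subrepresentations of dimension ${\bf e}$ inside $M_{{\bf r}^2}$. Such a subrepresentation is encoded by how many of its generators lie in each indecomposable summand of $M_{{\bf r}^2}$, and the compatibility constraints cut out exactly $C_n$ valid configurations, in bijection with a standard Catalan object such as Dyck paths of length $2n$. The main obstacle of the theorem is the sharp dimension jump in Part (a): pinpointing the boundary at precisely ${\bf r} = {\bf r}^2$ demands careful control of how the indecomposable decomposition of $M_{\bf r}$ interacts with the Hom/Ext data for subrepresentations of fixed dimension ${\bf e}$.
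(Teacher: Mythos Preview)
Your overall framework (quiver Grassmannians, stratification by sub-representation type, semicontinuity) matches the paper's, but there is a genuine gap in the forward direction of Part~(a). Exhibiting a single point $N\in\Gr_{\bf e}(M_{{\bf r}^2})$ with $\Ext^1(N,M_{{\bf r}^2}/N)=0$ shows only that the irreducible component through $N$ has dimension $\binom{n+1}{2}$; it says nothing about the other components, and in fact $\Gr_{\bf e}(M_{{\bf r}^2})$ has $C_n$ of them. So this argument cannot establish the global dimension bound $\dim\Gr_{\bf e}(M_{{\bf r}^2})=\binom{n+1}{2}$, and without that bound neither semicontinuity (for (a)) nor the equation count (for the LCI claim in (b)) gets off the ground. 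The paper instead proves that \emph{every} stratum $\mathcal{S}_{[N]}$ has dimension at most $\binom{n+1}{2}$: writing $M^2=A\oplus X$ with $X=S\oplus A^*/S$ and $N=N_P\oplus\overline{N}$, an explicit calculation reduces $\dim\mathcal{S}_{[N]}-\binom{n+1}{2}$ to $\dim\Hom(\overline{N},X)-\dim\End(\overline{N})-\dim\Hom(\overline{N},A^*)=\dim\Ext^1(\overline{N},S)-\dim\End(\overline{N})$, which is nonpositive since the first term counts the indecomposable summands of $\overline{N}$ while the second is at least the sum of squares of their multiplicities.

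The same gap propagates into your reducedness argument in Part~(b): generic reducedness requires a smooth point on \emph{each} irreducible component, not one smooth point somewhere. The paper provides this by constructing, for every non-crossing arc diagram $A$, an explicit short exact sequence $0\to N_A\to M^2\to Q_{A^*}\to 0$ with $\dim\Hom(N_A,Q_{A^*})=\binom{n+1}{2}$, so every component has a point whose tangent space has the correct dimension. Your converse in (a) and your count in (c) are in the right spirit; for the converse, the paper makes the ``failed inequality forces extra moduli'' step precise by a structural theorem showing that any $M$ not degenerating to $M^2$ is already a degeneration of one of finitely many explicit representations $M({\bf a}^{i,j})$, for which the choice $\overline{N}=S_i\oplus S_j$ violates the above stratum-dimension inequality.
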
  

Let ${\bf r}^1$ be the rank sequence such that ${\bf r}^1_{i,j}=n+1-j+i$. 
The second theorem describes the flat locus with irreducible fibers.
\begin{thmb}
\begin{itemize}
\item[a)]
$U_{\rm flat, irr}$ is the union of all orbits degenerating to $\mathcal{O}_{{\bf r}^1}$. 
\item[b)] A $\pi$--fiber in $U_{\rm flat}$ is irreducible if and only if it is normal.
\item[c)]
The $\pi$-fiber over any point from $\mathcal{O}_{{\bf r}^1}$ is isomorphic to the PBW degenerate flag variety $(\SL_{n+1}/B)^a$. 
\end{itemize}
\end{thmb}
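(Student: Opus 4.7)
My plan is to exploit the quiver Grassmannian realization of the fibers: each $\ff^{f_*}(V)$ is a Grassmannian of subrepresentations of an equioriented type $A_n$ representation $M_{f_*}$, and the isomorphism type of $M_{f_*}$ depends only on the rank tuple ${\bf r}$ of $f_*$. I would treat the three parts in the order (c), (a), (b), since the earlier parts are used in the later ones.

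For (c), the equations $r^1_{i,j}=n+1-j+i$ force each $f_i$ to have corank one with $\ker f_1,\ldots,\ker f_n$ linearly independent in $V$. This is exactly the PBW-type configuration of \cite{Feigin2}, and the associated quiver Grassmannian is identified in \cite{CFR1} with the PBW degenerate flag variety $(\SL_{n+1}/B)^a$. The isomorphism is automatic on the whole $\GL(V)^n$-orbit.

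For (a), the inclusion ``$\supseteq$'' is a semicontinuity argument: Theorem~A(b) gives flatness with reduced fibers over $U_{\rm flat}$, so the number of irreducible components of a fiber is upper semicontinuous; combined with part (c) and the irreducibility of $(\SL_{n+1}/B)^a$ proved in \cite{Feigin2}, the hypothesis $\mathcal{O}_{{\bf r}^1}\subseteq\overline{\mathcal{O}_{\bf r}}$ forces the fiber over $\mathcal{O}_{\bf r}$ to be irreducible. The converse inclusion ``$\subseteq$'' is the principal obstacle: for any orbit $\mathcal{O}_{\bf r}\subseteq U_{\rm flat}$ whose closure does \emph{not} contain $\mathcal{O}_{{\bf r}^1}$, one must exhibit at least two irreducible components in the corresponding fiber. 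My plan is to combine the cellular decomposition of Theorem~A(b) with a combinatorial parametrization of the top-dimensional cells (for instance by successor-closed subquivers of the coefficient quiver of $M_{\bf r}$) and to produce, for each such rank tuple, two distinct top cells that cannot share an irreducible component. The bookkeeping that links the $\GL(V)^n$-orbit closure order on rank tuples to the irreducible-component count of the quiver Grassmannian is the technical core of this direction.

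For (b), each fiber over $U_{\rm flat}$ is a reduced local complete intersection by Theorem~A(b), hence Cohen--Macaulay and in particular $S_2$, and it is connected as a flat proper degeneration of the connected classical flag variety. If the fiber is reducible, connectedness forces two components to meet; at a point of their intersection the local ring has several minimal primes and fails to be a domain, so the fiber is not normal. Conversely, if the fiber is irreducible, Serre's criterion reduces normality to the $R_1$ condition; by part (a) it suffices to consider rank tuples with $\mathcal{O}_{{\bf r}^1}\subseteq\overline{\mathcal{O}_{\bf r}}$, and for these the plan is either to invoke the Schubert variety identification established elsewhere in the paper (Schubert varieties being normal) or to bound the codimension of the singular locus directly using the explicit cell structure.
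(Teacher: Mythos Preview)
Your treatment of (c) and of the inclusion ``$\supseteq$'' in (a) matches the paper. The differences and gaps lie in the other two steps.

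\medskip
\textbf{Part (a), the inclusion ``$\subseteq$''.} Your plan is to analyze \emph{every} orbit in $U_{\rm flat}$ not degenerating to $\mathcal{O}_{{\bf r}^1}$ and to locate two top-dimensional cells in each fiber. The paper avoids this case-by-case work via a structural reduction: it proves (Theorem~\ref{t5}) that any $M$ in the flat locus which does not degenerate to $M^1$ is itself a degeneration of one of the $n-1$ representations $M({\bf a}^i)$. One then only has to exhibit reducibility of ${\rm Gr}_{\bf e}(M({\bf a}^i))$, which is done not via cells but via the isoclass stratification $\mathcal{S}_{[N]}$ and the dimension formula of Theorem~\ref{tc}: both $\overline{N}=0$ and $\overline{N}=S_i$ give top-dimensional strata. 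Reducibility is then pushed down to all degenerations of $M({\bf a}^i)$ by the openness of the irreducible-fiber locus (Theorem~\ref{ag}(2)) together with the fact that $\mathcal{O}_M$ is the unique closed orbit in the set of representations degenerating to $M$. Your direct approach is not wrong in principle, but the combinatorics of top cells for an arbitrary rank tuple in this range is substantially harder than for the finitely many $M({\bf a}^i)$; the reduction lemma is the step you are missing.

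\medskip
\textbf{Part (b).} Your argument for ``reducible $\Rightarrow$ not normal'' is correct and in fact cleaner than what the paper does: the paper establishes the stronger statement that each ${\rm Gr}_{\bf e}(M({\bf a}^i))$ has singularities in codimension~$1$ by finding two components meeting along a codimension-$1$ cell (Theorem~\ref{normallocus}), whereas your connectedness argument already suffices for non-normality.

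For ``irreducible $\Rightarrow$ normal'' there is a genuine gap. The Schubert identification in the paper is proved only on the PBW locus $U_{\rm PBW}$, which is strictly smaller than $U_{\rm flat,irr}$ (the non-regular broken rhyme schemes lie in $U_{\rm flat,irr}\setminus U_{\rm PBW}$), so invoking it does not cover all irreducible fibers. The paper's argument is the one you almost write but stop short of: by (a), every orbit in $U_{\rm flat,irr}$ degenerates to $\mathcal{O}_{{\bf r}^1}$, whose fiber $(\SL_{n+1}/B)^a$ is normal; since normality is open in flat proper families (Theorem~\ref{ag}(2)), all fibers over $U_{\rm flat,irr}$ are normal. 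No direct $R_1$ verification via cells is needed.
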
  

Finally, the third theorem describes the locus of $f_*$, such that the structure of the varieties ${\rm Fl}^{f_*}(V)$ is very similar
to the PBW degenerate flag varieties.
\begin{thmc}
All $\pi$--fibers over points in $U_{\rm PBW}$ are PBW--type degenerations of the complete flag variety.
Each of them is acted upon by a unipotent group with an open dense orbit and it is naturally isomorphic  to a Schubert variety.
\end{thmc}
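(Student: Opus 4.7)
The proof will split into three tasks mirroring the three claims of the statement: realize each fiber over $U_{\rm PBW}$ as a PBW--type degeneration, produce a unipotent group with an open dense orbit, and identify the result with a Schubert variety. The approach extends \cite{CL} and \cite{Feigin2}, which together cover the extremal case $\mathcal{O}_{{\bf r}^1}\subset U_{\rm PBW}$ of the full PBW degeneration.

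I would begin by classifying the $\GL(V)^n$-orbits in $U_{\rm PBW}$. Each such orbit is determined by the subset $I\subseteq\{1,\ldots,n\}$ of indices where $f_i$ has one-dimensional kernel, together with the configuration of the linearly independent lines $\ker f_i$ for $i\in I$. By choosing a basis $v_1,\ldots,v_{n+1}$ of $V$ adapted to these kernels, I may assume within each orbit that $f_i=\mathrm{id}$ for $i\notin I$ while $f_i$ is the projection annihilating $v_{\sigma(i)}$ for $i\in I$, for some injection $\sigma\colon I\hookrightarrow\{1,\ldots,n+1\}$. This reduces the theorem to a check on an explicit finite list of normal forms.

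Next I would construct a partial PBW deformation $\lie{g}^{f_*}$ of $\lie{sl}_{n+1}$ by declaring the pairs of root vectors separated by the ``cuts'' determined by $I$ to commute, while retaining the original brackets elsewhere. Its unipotent exponential $N^{f_*}$ acts on $\prod_{i=1}^n \Gr(i,V)$ and one checks directly that it preserves ${\rm Fl}^{f_*}(V)$. The orbit of the standard flag $(\langle v_1\rangle\subset\cdots\subset\langle v_1,\ldots,v_n\rangle)$ has full dimension by a tangent space computation analogous to the one in \cite{Feigin2}, and is dense because the fiber is irreducible by Theorem~B (since $U_{\rm PBW}\subset U_{\rm flat, irr}$).

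Finally, to identify ${\rm Fl}^{f_*}(V)$ with a Schubert variety I would adapt the embedding used in \cite{CL} for the full PBW case. There one embeds $\SL_{n+1}$ into a larger $\SL_N$ and realizes $(\SL_{n+1}/B)^a$ as a Schubert subvariety for a particular Weyl element $w_0^a$. For partial $f_*$, the natural analogue should use a group $\SL_N$ with $N=n+1+|I|$ and a Weyl element $w^{f_*}$ interpolating between the identity (when $I=\emptyset$) and $w_0^a$ (when $I=\{1,\ldots,n\}$). The hardest step will be the combinatorial identification of $w^{f_*}$ and the verification that the corresponding Schubert variety maps isomorphically onto the fiber; the argument of \cite{CL} exploits the full symmetry of the maximally degenerate case, while in the partial setting one must carefully track the interaction between degenerate and non-degenerate root spaces, which is where the bulk of the new work lies.
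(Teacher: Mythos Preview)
Your outline is correct and follows the same three-part strategy as the paper: parametrize orbits in $U_{\rm PBW}$ by subsets $I\subseteq\{1,\ldots,n-1\}$ (the paper's projection sequences $\mathbf{i}\in\mathcal{D}$), build a partial PBW filtration on $\lie n^-$ with associated graded $\lie n^{-,\mathbf{i}}$ and unipotent group $N^{\mathbf{i}}$, and embed into $\SL_{n+1+|I|}$ to realize the fiber as a Schubert variety $X_{w_{\mathbf{i}}}$ for an explicit Weyl element.

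Two differences are worth noting. First, the paper reverses the order of your last two steps: it establishes the Schubert isomorphism ${\rm Fl}^{\mathbf{i}}_{n+1}\cong X_{w_{\mathbf{i}}}$ first, using the catenoid criterion of \cite{CFRSchubert} (a cleaner general tool than adapting the ad hoc embedding of \cite{CL}), and only afterward identifies $X_{w_{\mathbf{i}}}$ with the closure $\overline{N^{\mathbf{i}}\cdot[v_\lambda^{\mathbf{i}}]}$ in $\mathbb{P}(V^{\mathbf{i}}(\lambda))$. In this order the open dense orbit is automatic from the definition of $\ff^{\mathbf{i}}(\lambda)$ and does not need Theorem~B or a separate tangent computation. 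Second, the step you correctly flag as hardest --- matching the PBW-degenerate module with the Demazure side --- is carried out in the paper not by a direct combinatorial identification but by showing that $w_{\mathbf{i}}$ is a \emph{triangular} Weyl group element in the sense of \cite{Fou1}, so that the marked chain polytope results there give $\dim V^{\mathbf{i}}(\lambda)=\dim V_{w_{\mathbf{i}}}(\lambda_{\mathbf{i}})$, after which an explicit map on fundamental representations finishes the job. Your direct plan (make $N^{f_*}$ act on $\prod_i\Gr(i,V)$ and check it preserves the fiber) is viable but implicitly requires this same module comparison, so you would need either the \cite{Fou1} input or an independent dimension argument.
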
  

The paper is organized as follows. 
\par
In Section \ref{setup}, we introduce the notion of linear degenerate flag varieties and explain the goal of this paper. Quiver Grassmannians are recalled in Section \ref{quiver}, and results on dimension estimation are obtained.
\par
Statements in Section \ref{setup} are translated into the language of quiver Grassmannians in Section \ref{Sec:Loci}; orbits and transversal slices in the flat irreducible locus are studied therein. We study the PBW locus in Section \ref{PBW}, where Theorem C is proved. 
In Section \ref{flat}, we prove Theorems A and B and study the desingularization of irreducible components of the fibers over $\mathcal{O}_{{\bf r}^2}$.
\bigskip
\\
\noindent
\textbf{Acknowledgements.} X.F. is supported by the Alexander von Humboldt Foundation. X.F would like to thank G.C-I. for invitation to Sapienza-Universit\`a di Roma where part of this work is carried out. G.C-I., G.F. and M.R. were partially supported by the DFG priority program 1388 ``Representation Theory``, in whose context this project has been initiated. E.F. was supported 
by the RSF--DFG grant 16-41-01013. 
G.C.I. was also supported by the italian FIRB program ``Perspectives in Lie Theory" RBFR12RA9W.

\section{Setup}\label{setup}
We fix the field of complex numbers $\mathbb{C}$ to be the base field.
\par 
Fix $n\geq 1$, and denote by $V$ an $(n+1)$-dimensional $\mathbb{C}$-vector space with basis $v_1,\ldots,v_{n+1}$. We consider sequences of linear maps 
$$
\xymatrix{
V\ar^-{f_1}[r]&V\ar^-{f_2}[r]&\ar[r]\cdots\ar^-{f_{n-1}}[r]&V,
}
$$
denoted as $f_*=(f_1,\ldots,f_{n-1})$.
These can be viewed as closed points of the variety $$R={\rm Hom}(V,V)^{n-1},$$
on which the group $G={\rm GL}(V)^n$ with elements $g_*=(g_1,\ldots,g_n)$  acts via base change:
$$g_*\cdot f_*=(g_2f_1g_1^{-1},g_3f_2g_2^{-1},\ldots,g_nf_{n-1}g_{n-1}^{-1}).$$
This action admits an open orbit $$U_{\rm iso}=G\cdot({\rm id}_V,\ldots,{\rm id}_V)$$ consisting of tuples of isomorphisms.

Let $U_*=(U_1,\ldots,U_n)$ be a tuple of subspaces in $V$ such that $\dim U_i=i$,  for all $i=1,\ldots,n$. Such tuples are viewed as the closed points of the product of Grassmannians
$$Z={\rm Gr}_1(V)\times\ldots\times{\rm Gr}_n(V),$$
which is a homogeneous space for the natural translation action of $G$ given by
$$g_*\cdot U_*=(g_1U_1,\ldots,g_nU_n).$$

We call $f_*$ and $U_*$ compatible if
$$f_i(U_i)\subset U_{i+1}\mbox{ for all }i=1,\ldots,n-1.$$
Let $Y$ be the variety of compatible pairs of sequences of maps and sequences of subspaces, that is,
$$Y=\{(f_*,U_*),\, f_i(U_i)\subset U_{i+1}\mbox{ for all }i=1,\ldots,n-1\}\subset R\times Z.$$
The actions of $G$ on $R$ and $Z$ induce an action of $G$ on $Y$. The projection $p:Y\rightarrow Z$ is $G$-equivariant, turning $Y$ into a homogeneous fibration over $Z$. The $p$-fiber over a tuple $(U_1,\ldots,U_n)$ can be identified, fixing complements $V_i$ to $U_i$ in $V$ for all $i$, with
$$\prod_{i<n}\left({\rm Hom}(U_i,U_{i+1})\oplus{\rm Hom}(V_i,V)\right),$$
thus $p$ is a homogeneous vector bundle over $Z$. In particular, the variety $Y$ is smooth and irreducible. To summarize the setup so far, we have the following diagram of $G$-equivariant varieties and maps:
$$
\xymatrix{
&&Y\ar_{\pi}[ddl]\ar^p[ddr]&&\ar@{..}[ll]\textrm{a smooth irreducible $G$--variety}\\
\textrm{projective $G$--map}\ar@{..}[r]&&&&\ar@{..}[l]\textrm{$G$--homogeneous vector bundle}\\
\textrm{affine $G$--variety}\ar@{..}[r]&R&&Z&\ar@{..}[l]\textrm{$G$--homogeneous space}
}
$$

The projection $\pi:Y\rightarrow R$ is projective, and the fiber over $({\rm id}_V,\ldots,{\rm id}_V)$ can be identified with the complete flag variety 
${\rm Fl}(V)$. Every fiber $\pi^{-1}(f_*)$ of $\pi$ can thus be viewed as a degenerate version of the complete flag variety.
\begin{definition} 
For $f_*\in R$, we call $${\rm Fl}^{f_*}(V)=\pi^{-1}(f_*)$$ the $f_*$-linear degenerate flag variety. We call the map $\pi:Y\rightarrow R$ the {\it universal linear degeneration} of ${\rm Fl}(V)$. We define $U_{\rm flat}\subset R$ as the subset of $R$ over which $\pi$ is flat, and $U_{\rm flat, irr}\subset R$ as the subset of $R$ over which $\pi$ is flat with irreducible fibers.
\end{definition}

By definition, we have
$$U_{\rm iso}\subset U_{\rm flat,irr}\subset U_{\rm flat}\subset R.$$

Our aim is to describe these loci and to study the geometry of the corresponding linear degenerate flag varieties.

\section{Methods from the representation theory of quivers}\label{quiver}

\subsection{Quiver representations}

For all basic definitions and facts on the representation theory of (Dynkin) quivers, we refer to \cite{ASS}.

Let $Q$ be a finite quiver with the set of vertices $Q_0$ and arrows written $\alpha:i\rightarrow j$ for $i,j\in Q_0$. We  assume that $Q$ is a Dynkin quiver, that is, 
its underlying unoriented graph $|Q|$ is a disjoint union of simply-laced Dynkin diagrams. 
\par
We consider (finite-dimensional) $\mathbb{C}$-representations of $Q$. Such a representation is given by a tuple $$V=((V_i)_{i\in Q_0},(f_\alpha)_{\alpha:i\rightarrow j}),$$
where $V_i$ is a finite-dimensional $\mathbb{C}$-vector space for every vertex $i$ of $Q$, and $f_\alpha:V_i\rightarrow V_j$ is a $\mathbb{C}$-linear map for every arrow $\alpha:i\rightarrow j$ in $Q$. A morphism between representations $V$ and $W=((W_i)_i,(g_\alpha)_\alpha)$ is a tuple of $\mathbb{C}$-linear maps $(\varphi_i:V_i\rightarrow W_i)_{i\in Q_0}$ such that $\varphi_jf_\alpha=g_\alpha\varphi_i$ for all $\alpha:i\rightarrow j$ in $Q$. Composition of morphisms is defined componentwise, resulting in a $\mathbb{C}$-linear category ${\rm rep}_\mathbb{C}Q$ of $Q$-representations of $Q$.
\par
This category is $\mathbb{C}$-linearly equivalent to the category $\bmod A$ of finite-dimensional left modules over the path algebra $A=\mathbb{C} Q$ of $Q$, in particular, it is a $\mathbb{C}$-linear abelian category.
\par
For a vertex $i\in Q_0$, we denote by $S_i$ the simple representation associated to $i$, namely, $(S_i)_i=\mathbb{C}$ and $(S_i)_j=0$ for all $j\not=i$, and all maps being identically zero; every simple representation is of this form. We let $P_i$ be a projective cover of $S_i$, and $I_i$ an injective hull of $S_i$. Considering $A$ as a left module over itself and using the above identification between representations of $Q$ and modules over $A$, we have $A=\bigoplus_{i\in Q_0}P_i$ and $A^*=\bigoplus_{i\in Q_0}I_i$, where $A^*$ denotes the $\mathbb{C}$-linear dual of $A$, viewed as a left module over $A$ with the aid of the right module structure of $A$ over itself.
\par
The Grothendieck group $K_0({\rm rep}_\mathbb{C}Q)$ is isomorphic to the free abelian group ${\mathbb Z}Q_0$ in $Q_0$ via the map attaching to the class of a representation $V$ its dimension vector 
${\rm\bf dim} V=(\dim V_i)_{i\in Q_0}\in{\mathbb Z}Q_0$. The category ${\rm rep}_\mathbb{C}Q$ is hereditary, that is, ${\rm Ext}^{\geq 2}(\_,\_)$ vanishes identically, and its homological Euler form
$$\dim {\rm Hom}(V,W)-\dim {\rm Ext}^1(V,W)=\langle{\rm\bf dim} V,{\rm\bf dim} W\rangle$$
is given by
$$\langle{\bf d},{\bf e}\rangle=\sum_{i\in Q_0}d_ie_i-\sum_{\alpha:i\rightarrow j}d_ie_j.$$

By Gabriel's theorem, the isomorphism classes $[U_\alpha]$ of indecomposable representations $U_\alpha$ of $Q$ correspond bijectively to the positive roots $\alpha$ of the root system 
$\Phi$ of type $|Q|$; more concretely, we realize $\Phi$ as the set of vectors $\alpha\in{\mathbb Z}Q_0$ such that $\langle\alpha,\alpha\rangle=1$; then there exists a unique (up to isomorphism) indecomposable representation $U_\alpha$ such that ${\rm\bf dim} U_\alpha=\alpha$ for every $\alpha\in\Phi^+=\Phi\cap{\mathbb N}Q_0$.

We make our discussion of the representation theory of a Dynkin quiver so far explicit in the case of the equioriented type $A_n$ quiver $Q$ given as
$$
\xymatrix{
1\ar[r]&2\ar[r]&\ar[r]\cdots\ar[r]&n
}
$$We identify ${\mathbb Z}Q_0$ with ${\mathbb Z}^n$, and the Euler form is then given by
$$\langle{\bf d},{\bf e}\rangle=\sum_{i=1}^nd_ie_i-\sum_{i=1}^{n-1}d_ie_{i+1}.$$
We denote the indecomposable representations by $U_{i,j}$ for $1\leq i\leq j\leq n$, where $U_{i,j}$ is given as
$$0\rightarrow\ldots\rightarrow 0\rightarrow \mathbb{C}\stackrel{{\rm id}}{\rightarrow}\ldots\stackrel{{\rm id}}{\rightarrow}\mathbb{C}\rightarrow 0\rightarrow\ldots\rightarrow 0,$$
supported on the vertices $i,\ldots,j$. In particular, we have $S_i=U_{i,i}$, $P_i=U_{i,n}$, $I_i=U_{1,i}$ for all $i$.

We have
$$\dim{\rm Hom}(U_{i,j},U_{k,l})=1\mbox{ if and only if }k\leq i\leq l\leq j$$
and zero otherwise, and we have
$$\dim{\rm Ext}^1(U_{k,l},U_{i,j})=1\mbox{ if and only if }k+1\leq i\leq l+1\leq j,$$
and zero otherwise, where the extension group, in case it is non-zero, is generated by the class of the exact sequence
$$0\rightarrow U_{i,j}\rightarrow U_{i,l}\oplus U_{k,j}\rightarrow U_{k,l}\rightarrow 0,$$ 
where we formally set $U_{i,j}=0$ if $i<1$ or $j>n$ or $j<i$.

\subsection{Varieties of representations of quivers}

Given a dimension vector ${\bf d}\in{\mathbb Z}Q_0$ and $\mathbb{C}$-vector spaces $V_i$ of dimension $d_i$ for $i\in Q_0$, let $R_{\bf d}$ be the affine $\mathbb{C}$-variety
$$R_{\bf d}=\bigoplus_{\alpha:i\rightarrow j}{\rm Hom}_\mathbb{C}(V_i,V_j),$$
on which the group
$$G_{\bf d}=\prod_{i\in Q_0}{\rm GL}(V_i)$$
acts via base change
$$(g_i)_i\cdot (f_\alpha)_\alpha=(g_jf_\alpha g_i^{-1})_{\alpha:i\rightarrow j}.$$
By definition, the $G_{\bf d}$-orbits $\mathcal{O}_M$ in $R_{\bf d}$ correspond bijectively to the isomorphism classes of representations $M$ of $Q$ of dimension vector ${\bf d}$. Note that, as a consequence of Gabriel's theorem, there are only finitely many $G_{\bf d}$-orbits in $R_{\bf d}$.

The orbit of $M$ degenerates to the orbit of $N$ if $N$ (or $\mathcal{O}_N$) is contained in the closure of $\mathcal{O}_M$. By \cite{Bo}, this holds if and only if $\dim{\rm Hom}(U,M)\leq\dim{\rm Hom}(U,N)$ for all indecomposable representations $U$ of $Q$.

\subsection{Dimension estimates for certain quiver Grassmannians}\label{Sec:QG}

Let $M$ be an arbitrary representation of the equioriented type $A_n$ quiver $Q$ of dimension vector ${\bf d}=(n+1,\ldots,n+1)$.
Let ${\bf e}$ be a dimension vector, ${\bf e}\le {\bf d}$ componentwise. As in Section \ref{setup}, let 
$Z_{\bf e}={\rm Gr}_1(V)\times\ldots\times{\rm Gr}_n(V)$ and let $Y_{\bf e}\subset R\times Z_{\bf e}$ be the variety of compatible pairs of
sequences $(f_*,U_*)$, $f_iU_i\subset U_{i+1}$. Then $Y_e={\rm Gr}_{\bf e}^Q({\bf d})$ is called the universal quiver Grassmannian. 
Let $\pi:Y_e\to R$ be the natural projection map. Then the quiver Grasmannian for a $Q$ representation $M\in R$ is defined as ${\rm Gr}_{\bf e}(M)=\pi^{-1}(M)$.
We would like to estimate the dimension of ${\rm Gr}_{\bf e}(M)$. A general representation $M^0$ of dimension vector ${\bf d}$ is isomorphic to 
$U_{1,n}^{\oplus(n+1)}$, thus all its arrows are represented by the identity maps. Since ${\rm Gr}_{\bf e}(M^0)$ is the $\SL_{n+1}$-flag variety, we know from \cite{CFR1} that every irreducible component of ${\rm Gr}_{\bf e}(M)$ has dimension at least $n(n+1)/2$. We would like to know when 
$\dim{\rm Gr}_{\bf e}(M)=n(n+1)/2$, and in case the equality holds, how many irreducible components (necessarily of this dimension) does the quiver Grassmannian have.

To this aim, we utilize a stratification of ${\rm Gr}_{\bf e}(M)$ introduced in \cite{CFR1}. Namely, for a representation $N$ of dimension vector ${\bf e}$, let $\mathcal{S}_{[N]}$ be the subset of ${\rm Gr}_{\bf e}(M)$ consisting of all sub-representations $U\subset M$ which are isomorphic to $N$. Then $\mathcal{S}_{[N]}$ is known to be an irreducible locally closed subset of ${\rm Gr}_{\bf e}(M)$ of dimension $\dim{\rm Hom}(N,M)-\dim{\rm End}(N)$. Since this gives a stratification of ${\rm Gr}_{\bf e}(M)$ into finitely many irreducible locally closed subsets, the irreducible components of ${\rm Gr}_{\bf e}(M)$ are necessarily of the form $\overline{\mathcal{S}_{[N]}}$ for certain $N$.

In the following, we decompose a representation $N$ as $N=N_P\oplus\overline{N}$, where $N_P$ is projective, and $\overline{N}$ has no projective direct summands. We decompose $M=P\oplus X$ where $P$ is projective. We first note a result which is very special to the linearly oriented type $A$ quiver $Q$ used here:

\begin{proposition}  Let $M$ and $N$ be as before and let $\mathbf{e}:=\mathbf{dim}\,N$ be a dimension vector. Then $N$ admits an embedding into $M$ if and only if
\begin{enumerate}
\item $\overline{N}$ embeds into $X$ and 
\item ${\bf e}-{\rm\bf dim}\,\overline{N}\leq{\rm\bf dim}\,P.$
\end{enumerate}
The isomorphism types of subrepresentations of $M$ of dimension vector ${\bf e}$ are para\-metrized by the isomorphism classes of representations $\overline{N}$ satisfying these two properties.
\end{proposition}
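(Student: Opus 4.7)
For the forward direction, exploit that the non-projective indecomposables of $Q$ are the $U_{k,l}$ with $l<n$, so $X_n=0$ and, since $\overline{N}$ has no projective summands, also $\overline{N}_n=0$. For each vertex $j$ consider the composition
$\phi^N_j:=f^N_{n-1}\circ\cdots\circ f^N_j\colon N_j\to N_n=(N_P)_n$
and its analogue $\phi^M_j\colon M_j\to M_n=P_n$. Projectivity of $N_P$ amounts to injectivity of all its structure maps, so $\phi^N_j$ is injective on $(N_P)_j$ and vanishes on $\overline{N}_j$, giving $\ker\phi^N_j=\overline{N}_j$; the same argument in $M$ gives $\ker\phi^M_j=X_j$. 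The commutative square $\iota_n\phi^N_j=\phi^M_j\iota_j$ then forces $\iota(\overline{N}_j)\subseteq X_j$ for every $j$, producing an injective morphism $\overline{N}\hookrightarrow X$, which is condition (1). For condition (2), $\phi^M_j\iota_j$ is injective on $(N_P)_j$ and takes values in $\phi^M_j(M_j)=\phi^M_j(P_j)$, whence $\dim(N_P)_j\leq\dim P_j$ and therefore $\mathbf{dim}\,N_P\leq\mathbf{dim}\,P$.

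For the converse, since $\overline{N}\hookrightarrow X$ is given, it suffices to build an embedding $N_P\hookrightarrow P$. Writing $N_P=\bigoplus_i U_{i,n}^{a_i}$ and $P=\bigoplus_i U_{i,n}^{b_i}$, the inequality $\mathbf{e}-\mathbf{dim}\,\overline{N}\leq\mathbf{dim}\,P$ reads $\sum_{i\leq j}a_i\leq\sum_{i\leq j}b_i$ for every $j$. This is precisely Hall's marriage condition for the bipartite graph with an edge $U_{i,n}\to U_{k,n}$ whenever $k\leq i$, the binding subsets being the initial segments $\{U_{1,n},\dots,U_{j,n}\}$; a matching therefore exists and realises $N_P\hookrightarrow P$ via summandwise inclusions $U_{i,n}\hookrightarrow U_{k,n}$. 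Direct-summing with $\overline{N}\hookrightarrow X$ gives $N\hookrightarrow M$.

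The parametrization is then immediate from Krull--Schmidt: the isomorphism type of $N$ is determined by those of $N_P$ and $\overline{N}$, and $N_P=\bigoplus_i U_{i,n}^{a_i}$ is recovered from $\mathbf{dim}\,N_P=\mathbf{e}-\mathbf{dim}\,\overline{N}$ via $a_j=(\mathbf{dim}\,N_P)_j-(\mathbf{dim}\,N_P)_{j-1}$. Hence isomorphism classes of subrepresentations of $M$ of dimension vector $\mathbf{e}$ correspond bijectively to isomorphism classes of representations $\overline{N}$ without projective summands satisfying (1) and (2). The expected main obstacle is the forward direction; its resolution rests on the observation that in equioriented type $A_n$ projectivity is detected by injectivity of all structure maps, which together with $X_n=\overline{N}_n=0$ turns the separation of projective and non-projective parts into a clean kernel chase rather than a combinatorial matching problem.
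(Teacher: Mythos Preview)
Your proof is correct and follows the same overall strategy as the paper---separate $N$ and $M$ into their projective and non-projective parts and handle each separately---but it is more self-contained. The paper outsources the key facts to the embedding criterion of M\"ollenhoff--Reineke \cite{MoeR}: it deduces condition~(1) from the observation $\Hom(\overline{N},P)=0$, and both the forward direction of~(2) and the construction of $N_P\hookrightarrow P$ in the converse are read off from the ``special form of the inequalities'' in that reference. You replace these citations with explicit arguments: the kernel chase using $\ker\phi^M_j=X_j$ and $\ker\phi^N_j=\overline{N}_j$ (which is exactly the vanishing $\Hom(\overline{N},P)=0$ made concrete), a direct rank comparison for~(2), and Hall's marriage theorem for the matching $N_P\hookrightarrow P$. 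The payoff of your route is that nothing is hidden in an external reference and the special feature of equioriented type~$A_n$ being used (projectivity $\Leftrightarrow$ all structure maps injective $\Leftrightarrow$ support reaches vertex~$n$) is made fully visible; the paper's route is shorter on the page but less transparent.
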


\begin{proof} This follows using the exact criterion given in \cite[Section 3]{MoeR} for existence of embeddings between representations of $Q$:

Suppose that $\overline{N}$ embeds into $X$ and that ${\bf e}-{\rm\bf dim}\overline{N}\leq{\rm\bf dim}P$. Then ${\rm\bf dim}N_P\leq {\rm\bf dim}P$, thus there exists an embedding of $N_P$ into $P$. This yields an embedding of $N$ into $M$. Conversely, suppose $N$ embeds into $M$. Then $\overline{N}$ embeds into $X$ since there are no non-zero maps from $\overline{N}$ to $P$. Now the special form of the inequalities in \cite[Section 3]{MoeR} characterizing embeddings also shows that $N_P$ embeds into $P$, which translates to ${\rm\bf dim}N_P\leq{\rm \bf dim}P$, yielding the second condition. Now given $\overline{N}$ satisfying both conditions, we define $N_P$ as the unique projective representation of dimension vector ${\bf e}-{\rm\bf dim}N_P$, which again embeds into $P$, thus determining the representation $N$.
\end{proof}

\begin{theorem}\label{tc} Let $M$ be a representation of $Q$ of dimension vector ${\bf d}$, written as $M=P\oplus X$, where $P$ is a projective representation. Let $\mathbf{e}:=\mathbf{dim}\,A=(1,2,\cdots, n)$.
\begin{enumerate}
\item The quiver Grassmannian ${\rm Gr}_{\bf e}(M)$ has dimension $n(n+1)/2$ if and only if, for all subrepresentations $\overline{N}$ of $X$ such that ${\bf e}-{\rm\bf dim}\overline{N}$ is the dimension vector of a projective representation embedding into $P$, we have
$$\dim{\rm End}(\overline{N})\geq \dim{\rm Hom}(\overline{N},X)-\dim{\rm Hom}(\overline{N},A^*).$$
\item In this case, the irreducible components of ${\rm Gr}_{\bf e}(M)$ are of the form $\overline{\mathcal{S}_{[N]}}$ for representations $N=N_P\oplus\overline{N}$ as above such that, in the previous inequality for $\overline{N}$, equality holds.
\end{enumerate}
\end{theorem}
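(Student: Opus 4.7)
The plan is to derive the theorem from the stratification $\Gr_{\bf e}(M) = \bigsqcup_{[N]} \mathcal{S}_{[N]}$ recalled in Section~\ref{Sec:QG}. Since every irreducible component of $\Gr_{\bf e}(M)$ has dimension at least $n(n+1)/2$ by the bound from \cite{CFR1}, while the ambient dimension is the maximum of the stratum dimensions $\dim \mathcal{S}_{[N]} = \dim \Hom(N,M) - \dim \End(N)$, the equality $\dim \Gr_{\bf e}(M) = n(n+1)/2$ is equivalent to the upper bound $\dim \mathcal{S}_{[N]} \leq n(n+1)/2$ holding for every subrepresentation type $[N]$ that actually occurs; and in that case, the irreducible components are precisely the closures of those strata attaining this bound. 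Thus the whole theorem reduces to rewriting the stratum-dimension formula as a condition on $\overline{N}$.

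To do so, I parametrize $N$ via the previous proposition as $N = N_P \oplus \overline{N}$, with $N_P$ projective of dimension vector ${\bf e} - \mathbf{dim}\,\overline{N}$ and $\overline{N} \hookrightarrow X$. A feature genuinely specific to the equioriented linear $A_n$ quiver is that every indecomposable summand of $\overline{N}$ is some $U_{k,l}$ with $l<n$, while the indecomposable projectives are exactly $P_i = U_{i,n}$; the explicit $\Hom$-formula of Section~\ref{quiver} then forces $\Hom(\overline{N}, P) = 0$ and $\Hom(\overline{N}, N_P) = 0$. Hence all cross-terms between $\overline{N}$ and the projective parts drop out of both $\Hom(N,M)$ and $\End(N)$, and the $N_P$-block can be handled on its own.

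The next step is to collapse the $N_P$-block using projectivity in the form $\dim\Hom(P_i, Y)=\dim Y_i$. Writing $N_P = \bigoplus_i P_i^{m_i}$, the bracket telescopes to $\sum_i m_i(\dim M_i - \dim N_i) = \sum_i m_i(n+1-i)$, which since $\dim P_i = n+1-i$ equals the total dimension $\dim N_P = \tfrac{n(n+1)}{2} - |\overline{N}|$, where $|\overline{N}| = \sum_i \dim \overline{N}_i$. The last ingredient is to recognize $|\overline{N}|$ as $\dim \Hom(\overline{N}, A^*)$, since $A^* = \bigoplus_i I_i = \bigoplus_i U_{1,i}$ and $\dim\Hom(Y, I_i) = \dim Y_i$ by the same $\Hom$-formula. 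Substituting everything produces
$$\dim \mathcal{S}_{[N]} = \frac{n(n+1)}{2} + \dim \Hom(\overline{N}, X) - \dim \End(\overline{N}) - \dim \Hom(\overline{N}, A^*),$$
from which both parts follow immediately: part~(1) by demanding $\dim\mathcal{S}_{[N]}\le n(n+1)/2$ for every admissible $\overline{N}$, and part~(2) by singling out the strata where equality is realized.

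The proof is in the end a careful bookkeeping of $\Hom$- and $\End$-contributions; the single conceptual input, specific to the linear orientation, is the double vanishing $\Hom(\overline{N}, P) = \Hom(\overline{N}, N_P) = 0$, which decouples the projective part of any subrepresentation from its complement and makes the clean final formula possible. Once this vanishing is in place, all remaining identities are immediate from the explicit $\Hom$-formulas of Section~\ref{quiver}.
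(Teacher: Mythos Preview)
Your argument is correct and follows essentially the same route as the paper's proof: both split $N=N_P\oplus\overline{N}$, use the vanishing $\Hom(\overline{N},\text{projective})=0$ to decouple the two blocks, and reduce the projective block to an Euler-form/dimension-count that collapses to $n(n+1)/2-[\overline{N},A^*]$. The only cosmetic difference is that the paper phrases the collapse via the Euler form identities $[N_P,M]-[N_P,N]=\langle\dim N_P,\dim A^*\rangle$ and $[A,A^*]=\langle\dim A,\dim A^*\rangle$ (using projectivity of $N_P,A$ and injectivity of $A^*$), whereas you compute the same quantity directly from $[P_i,Y]=\dim Y_i$ and $[Y,I_i]=\dim Y_i$; the content is identical.
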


\begin{proof}
Let $N$ be a subrepresentation of $M$ written as $N=N_P\oplus\overline{N}$ as above.

We use the shorthand notation $\dim{\rm Hom}(V,W)=[V,W]$ and the fact that there are no homomorphisms from representations without projective direct summands to projective representations. Then we can calculate:  
\begin{eqnarray*}
& &\dim\mathcal{S}_{[N]}-n(n+1)/2\\
&=&[N,M]-[N,N]-[A,A^*]\\
&=&[\overline{N},M]-[\overline{N},N]+[N_P,M]-[N_P,N]-[A,A^*]\\
&=&[\overline{N},X]-[\overline{N},\overline{N}]+\langle{\rm\bf dim} N_P,{\rm\bf dim}M-{\rm\bf dim}N\rangle-\langle{\rm\bf dim}A,{\rm\bf dim}A^*\rangle\\
&=&[\overline{N},X]-[\overline{N},\overline{N}]+\langle{\rm\bf dim} N_P,{\rm\bf dim}A^*\rangle-\langle{\rm\bf dim}A,{\rm\bf dim}A^*\rangle\\
&=&[\overline{N},X]-[\overline{N},\overline{N}]-\langle{\rm\bf dim} \overline{N},{\rm\bf dim}A^*\rangle\\
&=&[\overline{N},X]-[\overline{N},\overline{N}]-[\overline{N},A^*].
\end{eqnarray*}

All claims of the theorem follow.
\end{proof}

\subsection{Some local properties of schemes}

We collect some facts on local properties of schemes and morphisms which will be used in the following.

\begin{theorem}\label{ag} The following holds:
\begin{enumerate}
\item Let $f:X\rightarrow Y$ be a morphism of varieties, where $X$ is Cohen-Macaulay and $Y$ is regular. Then $f$ is flat if and only if its fibers are equidimensional.
\item Let $f:X\rightarrow Y$ be a flat proper morphism of varieties. Then the locus of all $y\in Y$ for which $f^{-1}(y)$ is reduced (resp.~irreducible, resp.~normal) is open in $Y$.
\item A scheme is reduced if it is generically reduced and Cohen-Macaulay.
\end{enumerate}
\end{theorem}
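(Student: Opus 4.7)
The plan is not to reprove Theorem~\ref{ag} from scratch, since all three statements are standard facts from commutative algebra and algebraic geometry; rather, I would identify the correct citations and check that the hypotheses fit. Since $X$ and $Y$ are $\mathbb{C}$-varieties, the distinction between ``reduced/irreducible/normal'' and the geometric variants of these conditions disappears, which simplifies matters.

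For part~(1), the forward direction is the standard dimension formula for flat morphisms: if $f$ is flat at $x$ with $y=f(x)$, then $\dim_x f^{-1}(y)=\dim_x X-\dim_y Y$, giving equidimensionality of the fibers at once. For the converse I would invoke Hironaka's ``miracle flatness'' theorem, in the form found in Matsumura's \emph{Commutative Ring Theory} (Theorem~23.1), or equivalently EGA~IV, 6.1.5: a local homomorphism $\mathcal{O}_{Y,y}\to\mathcal{O}_{X,x}$ with $\mathcal{O}_{Y,y}$ regular and $\mathcal{O}_{X,x}$ Cohen--Macaulay is flat as soon as $\dim\mathcal{O}_{X,x}=\dim\mathcal{O}_{Y,y}+\dim(\mathcal{O}_{X,x}/\mathfrak{m}_y\mathcal{O}_{X,x})$. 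One checks this pointwise on $X$; the equidimensionality of all fibers, combined with irreducibility of the relevant components of $X$ and $Y$, pins down the required dimension equality.

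For part~(2), each of the three openness claims is a direct application of results in EGA~IV, Section~12. The relevant statements are Th\'eor\`eme~12.2.1 (openness of the reduced and the geometrically irreducible fiber loci) and Th\'eor\`eme~12.2.4 (openness of the normal fiber locus). All of these require precisely the hypotheses already assumed: $f$ flat and proper, and of finite presentation (automatic for morphisms of varieties). The common underlying mechanism is that these loci are constructible for morphisms of finite presentation, and flatness plus properness upgrade constructibility to openness via specialization and semicontinuity.

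Part~(3) is Serre's classical criterion for reducedness: a Noetherian scheme is reduced if and only if it satisfies the conditions $R_0$ and $S_1$. A Cohen--Macaulay scheme satisfies $S_k$ for every $k$, so $S_1$ is automatic; ``generically reduced'' is precisely the condition $R_0$. Combining the two yields the conclusion. The only genuine obstacle in the whole theorem is therefore nothing more than locating the precise forms of the openness results in EGA~IV and verifying that the absolute notions used here agree with the geometric variants appearing there, which is automatic over the algebraically closed field $\mathbb{C}$.
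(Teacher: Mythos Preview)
Your proposal is correct and matches the paper's own proof essentially line for line: the paper cites Matsumura, Theorem~23.1 for (1), EGA~IV, Th\'eor\`eme~12.2.4 for (2), and invokes Serre's criterion $R_0+S_1$ together with Cohen--Macaulay $\Rightarrow S_k$ for (3). You have simply fleshed out these citations with a sentence or two of context each, which is fine.
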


\begin{proof} The first statement is \cite[Theorem 23.1]{Mat}, the second is \cite[Theoreme 12.2.4]{EGAIV}, the third follows from reducedness being 
equivalent to generic reducedness plus Serre's condition $S_1$, and Cohen-Macaulay being equivalent to $S_k$ for all $k$.
\end{proof}

\section{Loci in \texorpdfstring{$R$}{the base space}}\label{Sec:Loci}

We translate the objects introduced in Section \ref{setup} to the language of quiver Grassmannians. Let $Q$ be the equioriented type $A_n$ quiver, 
$A$ be its path algebra, ${\bf d}=(n+1,\ldots,n+1)$ and ${\bf e}=(1,2,\ldots,n)$. Let $M\in R$ be a ${\bf d}$-dimensional $Q$-representation with the maps
$f_i:M_i\to M_{i+1}$. Then ${\rm Gr}_{\bf e}(M)$ is isomorphic to ${\rm Fl}^{f_*}(V)$ and the isomorphism is induced via the identification
of $R$ with the variety $R_{\bf d}(Q)$ of ${\bf d}$-dimensional representation of $Q$.
\par
The orbits of $G$ in $R$ are parametrized by rank tuples $${\bf r}=(r_{i,j})_{1\leq i<j\leq n},$$
where
$$r_{i,j}={\rm rank}(f_{j-1}\circ\ldots\circ f_i).$$
Denote by $\mathcal{O}_{\bf r}$ the set of all sequences of maps with the given ranks. This is non-empty if and only if a set of natural inequalities 
in the ranks is fulfilled, namely if
$$r_{i,j}+r_{i-1,j+1}\geq r_{i,j+1}+r_{i-1,j}$$
for all $1\leq i\leq j\leq n$, where we formally set $r_{i,j}=0$ if $i=0$ or $j=n+1$ and $r_{i,i}=n+1$. If non-empty, $\mathcal{O}_{\bf r}$ is a single $G$-orbit, and every orbit arises in this way. In particular, we have $U_{\rm iso}=O_{{\bf r}^0}$, where $r^0_{i,j}=n+1$ for all $i<j$.
\par
Moreover, it is known that $\mathcal{O}_{\bf r}$ degenerates to $\mathcal{O}_{{\bf r}'}$, that is, $\mathcal{O}_{{\bf r}'}$ is contained in the closure of $\mathcal{O}_{\bf r}$, if and only if $r_{i,j}\geq r_{i,j}'$ for all $i<j$.
\par
Denote by ${\rm Fl}^{\bf r}(V)$ the $\pi$-fiber over a point in $\mathcal{O}_{\bf r}$, which is well-defined up to isomorphism since $\pi$ is $G$-equivariant. We call ${\rm Fl}^{\bf r}(V)$ the ${\bf r}$-degenerate flag variety.

\begin{definition}\label{Def:R1R2}
We denote by ${\bf r}^1$ the rank tuple defined by ${\bf r}^1_{i,j}=n+1-j+i$ for all $i<j$, and by ${\bf r}^2$ the rank tuple defined by ${\bf r}^2_{i,j}=n-j+i$ for all $i<j$,
\end{definition}

\begin{theorem} \label{t2} We have the following description of the flat, respectively flat and irreducible, locus of $R$:
\begin{enumerate}
\item $U_{\rm flat}$ is the union of all orbits degenerating to $\mathcal{O}_{{\bf r}^2}$.
\item $U_{\rm flat, irr}$ is the union of all orbits degenerating to $\mathcal{O}_{{\bf r}^1}$. 
\end{enumerate}
\end{theorem}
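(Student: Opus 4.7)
The plan is to deduce both statements from Theorem~\ref{tc} together with the flatness criterion Theorem~\ref{ag}(1) and the lower bound $\dim{\rm Gr}_{\bf e}(M)\geq n(n+1)/2$ recalled in Section~\ref{Sec:QG}.

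For part (1), since $Y$ is smooth and $R$ is affine space, Theorem~\ref{ag}(1) says that $\pi$ is flat at $f_*$ if and only if the fibers are equidimensional in a neighborhood of $f_*$. As the generic fiber is the classical flag variety, the common dimension in the flat locus must be $n(n+1)/2$; conversely, thanks to the lower bound just mentioned, any fiber of dimension $n(n+1)/2$ is automatically equidimensional. Hence
\[
U_{\rm flat}=\{f_*\in R:\dim{\rm Fl}^{f_*}(V)=n(n+1)/2\}.
\]
Theorem~\ref{tc}(1) expresses this dimensional equality by the inequality
\[
[\overline{N},\overline{N}]\;\geq\;[\overline{N},X]-[\overline{N},A^*]
\]
for every admissible subrepresentation $\overline{N}$ of $X$ with no projective direct summand, where $M=P\oplus X$ with $P$ the maximal projective summand. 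Decomposing $M$ into indecomposables $U_{k,l}$ and using the explicit ${\rm Hom}$-dimensions recalled in Section~3.1, I would expand both sides as sums in the multiplicities $m_{k,l}$ and show that the entire family of inequalities is equivalent to the single rank condition $r_{i,j}(M)\geq n-j+i = r^2_{i,j}$. In one direction, the rank bound forces each $\overline{N}$-inequality by a termwise comparison; in the other direction, whenever $r_{i,j}(M)<r^2_{i,j}$ one exhibits a specific $\overline{N}$ built from indecomposables whose supports cross the interval $[i,j]$ and which violates the inequality at that spot.

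For part (2), Theorem~\ref{tc}(2) identifies the irreducible components of ${\rm Fl}^{\bf r}(V)$ inside the flat locus with the isomorphism classes of $\overline{N}$ achieving equality in the above inequality, so irreducibility of the fiber amounts to uniqueness of such an $\overline{N}$. I would first verify this uniqueness at $M$ of orbit type ${\bf r}^1$, either through a direct combinatorial computation or by invoking the identification of the fiber with the irreducible PBW-degenerate flag variety from Theorem C and \cite{Feigin2}. Openness of the irreducibility locus inside $U_{\rm flat}$ (Theorem~\ref{ag}(2)) then propagates uniqueness to every orbit $\mathcal{O}_{\bf r}$ with $r_{i,j}\geq r^1_{i,j}$ for all $i<j$. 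Conversely, for each orbit $\mathcal{O}_{\bf r}\subset U_{\rm flat}$ satisfying $r_{i,j}<r^1_{i,j}$ at some $(i,j)$, I would produce two distinct equality-achieving $\overline{N}$, corresponding to two non-isomorphic ways of absorbing the rank shortfall into the indecomposable decomposition; by openness it suffices to do this for the orbits that are immediate specialisations of $\mathcal{O}_{{\bf r}^1}$ in the degeneration order.

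The main obstacle is the combinatorial step within each part: converting the homological inequality of Theorem~\ref{tc} into the explicit rank condition $r_{i,j}\geq r^2_{i,j}$, and carrying out the corresponding equality analysis controlling the threshold ${\bf r}^1$. Only the Euler form and the explicit ${\rm Hom}$-dimensions between the indecomposables $U_{k,l}$ of the equioriented type $A_n$ quiver are involved, but the bookkeeping over all multiplicity patterns is delicate, as is the explicit construction of a single offending or second-component-witnessing subrepresentation localised at a prescribed index $(i,j)$.
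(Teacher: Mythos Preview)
Your overall framework is right---Theorem~\ref{tc} for the dimension/component count and Theorem~\ref{ag} for flatness and openness---but the proof in the paper is organised quite differently, and the difference is exactly where you locate your ``main obstacle''.

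The paper never attempts the direct equivalence between the family of $\overline{N}$-inequalities and the rank condition $r\geq{\bf r}^2$. Instead it uses the structural Theorem~\ref{t5} together with semicontinuity. For part~(1): the inequality of Theorem~\ref{tc} is verified \emph{only} at $M^2$, where the right-hand side simplifies via the sequence $0\to S\to A^*\to A^*/S\to 0$ to $\dim{\rm Ext}^1(\overline{N},S)=\sum n_{i,j}$, and $\dim{\rm End}(\overline{N})\geq\sum n_{i,j}^2\geq\sum n_{i,j}$ is immediate. Upper semicontinuity of fiber dimension then gives $\dim{\rm Gr}_{\bf e}(M)=n(n+1)/2$ for every $M$ degenerating to $M^2$. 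For the converse, Theorem~\ref{t5} says that any $M$ not degenerating to $M^2$ is a degeneration of some $M({\bf a}^{i,j})$; at that single representation the witness $\overline{N}=S_i\oplus S_j$ violates the inequality, and semicontinuity pushes the dimension excess down to $M$. Part~(2) is parallel: irreducibility is checked only at $M^1$ (the PBW degenerate flag variety), openness propagates it upward, and Theorem~\ref{t5} reduces the converse to showing reducibility at each $M({\bf a}^i)$, where the two witnesses $\overline{N}=0$ and $\overline{N}=S_i$ both achieve equality; closedness of the reducible locus then forces reducibility at any degeneration.

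So the ``delicate bookkeeping'' you anticipate is exactly what the paper avoids. Your proposed termwise comparison for arbitrary $M$ with $r\geq{\bf r}^2$ would be hard to execute, because the clean ${\rm Ext}^1$-identity is special to $M^2$. Likewise, your ``immediate specialisations of $\mathcal{O}_{{\bf r}^1}$'' are not identified; Theorem~\ref{t5} is precisely the statement that the maximal orbits in $U_{\rm flat}$ not degenerating to $M^1$ are the $\mathcal{O}_{M({\bf a}^i)}$, and without it you cannot reduce the reducibility check to finitely many explicit cases. The missing ingredient in your plan is therefore Theorem~\ref{t5}, plus the observation that semicontinuity lets you verify Theorem~\ref{tc} only at the extreme orbits $M^2$, $M({\bf a}^i)$, $M({\bf a}^{i,j})$ rather than uniformly.
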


The proof of Theorem~\ref{t2} will be given in Section~\ref{Sec:Proofs}.

\subsection{Complements of certain open loci in \texorpdfstring{$R$}{the base space}}

In light of the above interpretation of $R$ as $R_{\bf d}(Q)$, the $G$-orbits in $R$ are naturally parametrized by isomorphism classes of representations of $Q$ of dimension vector ${\bf d}$. 
By the Krull-Schmidt theorem, a $Q$-representation $M$ is, up to isomorphism, determined by the multiplicities of the $U_{i,j}$, that is,
$$M=\bigoplus_{i\leq j}U_{i,j}^{m_{i,j}}.$$
Then ${\rm\bf dim} M={\bf d}$ is equivalent to
$$\sum_{k\leq i\leq l}m_{k,l}=n+1\mbox{ for all }i.$$
We define
$$r_{i,j}(M)=\sum_{k\leq i\leq j\leq l}m_{k,l}$$
for $i\leq j$. Viewing $M$ as a tuple of maps $(f_1,\ldots,f_{n-1})$ as before, $r_{i,j}$ is thus the rank of $f_{j-1}\circ\ldots\circ f_i$ and, trivially, we have $r_{i,i}=n+1$. We can recover $m_{i,j}$ from $(r_{k,l})_{k,l}$ via
$$m_{i,j}=r_{i,j}-r_{i,j+1}-r_{i-1,j}+r_{i-1,j+1},$$
which explains the natural rank inequalities above. More generally, we easily derive the inequality
\begin{equation}\label{recin}r_{i,l}+r_{j,k}\geq r_{i,k}+r_{j,l}\end{equation}
for all four-tuples $i<j\leq k<l$.

We introduce some special representations: for a tuple ${\bf a}=(a_1,\ldots,a_{n-1})$ of non-negative integers $a_i$ such that $\sum_{i<n}a_i\leq n+1$, we define $M({\bf a})$ by the multiplicities:
$$m_{1,n}=n+1-\sum_ia_i,\; m_{1,i}=a_i\mbox{ for }i<n,\; m_{i,n}=a_{i-1}\mbox{ for }i>1,$$
and $ m_{j,k}=0\mbox{ for all other }j<k.$
In particular, we define
$$M^0=M(0,\ldots,0),\;
M^1=M(1,\ldots,1).$$

We also define $M^2$ by the multiplicities $$m_{1,1}=m_{n,n}=2,\; m_{1,i}=1\mbox{ for all }i>1,\; m_{i,n}=1\mbox{ for all }i<n,$$
$$ m_{i,i}=1\mbox{ for all }1<i<n,$$
and $m_{j,k}=0$ for all other $j<k$.

A direct calculation then shows that
$${\bf r}(M^0)={\bf r}^0,\;
{\bf r}(M^1)={\bf r}^1,\;
{\bf r}(M^2)={\bf r}^2.$$

In more invariant terms, we can write $M^1=A\oplus A^*$, where $A$ is the path algebra viewed as a (bi-)module over itself, and $A^*$ is the linear dual of $A$. There exists a short exact sequence
$$0\rightarrow A\rightarrow M^0\rightarrow A^*\rightarrow 0.$$
We have canonical maps
$$A\rightarrow A/{\rm rad}(A)=:S\simeq{\rm soc}(A^*)\rightarrow A^*,$$
and $M^2$ can be written as
\begin{equation}\label{Eq:DefM2}
M^2\simeq A\oplus S\oplus (A^*/S)\simeq {\rm rad}(A)\oplus S\oplus A^*.
\end{equation}

Now we turn to degenerations of representations. Again we write $M\leq N$ if the closure of the $G_{\bf d}$-orbit of $M$ contains $N$; the numerical characterization \cite{Bo} of degenerations mentioned above then reads
$$M\leq N\mbox{ if and only if } r_{i,j}(M)\geq r_{i,j}(N)\mbox{ for all }i<j.$$
The representation $M^0=U_{1,n}^{n+1}$ is generic in the sense that $M^0\leq M$ for all $M$ in $R$.
It is proven in \cite{CFR1} that a representation $M$ degenerates to $M^1$ if and only if it fits into a short exact sequence $0\rightarrow A\rightarrow M\rightarrow A^*\rightarrow 0$.

We are now interested in the complement of the locus of representations degenerating into $M^1$ resp. $M^2$. For this, we introduce the following tuples:
\begin{itemize}
\item for $1\leq i<n$, define $${\bf a}^i=(0,\ldots,0,2,0,\ldots,0),$$ with the $2$ placed at the $i$-th entry;
\item  for $1\leq i\leq j<n$, define $${\bf a}^{i,j}=(0,\ldots,0,2,1,\ldots,1,2,0,\ldots,0),$$
with the $2$'s placed at the $i$-th and $j$-th entry, except in the case $j=i$, where we define   
$${\bf a}^{i,i}=(0,\ldots,0,3,0,\ldots,0),$$ with the $3$ placed at the $i$-th entry.
\end{itemize}

Now we can formulate:

\begin{theorem}\label{t5} Let $M$ be a representation in $R$.
\begin{enumerate}
\item If $M$ degenerates to $M^2$ but not to $M^1$, then $M$ is a degeneration of $M({\bf a}^i)$ for some $i$.
\item If $M$ does not degenerate to $M^2$, then $M$ is a degeneration of $M({\bf a}^{i,j})$ for some $i\leq j$.
\end{enumerate}
\end{theorem}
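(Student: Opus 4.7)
Work entirely with rank tuples via the degeneration order $M \le N \Leftrightarrow r_{k,l}(M) \ge r_{k,l}(N)$, and introduce the ``codimension'' $d_{k,l}(M) := r^1_{k,l} - r_{k,l}(M) = n+1-(l-k)-r_{k,l}(M)$; thus $M \le M^1$ iff $d_{k,l}(M) \le 0$ everywhere, and $M \le M^2$ iff $d_{k,l}(M) \le 1$ everywhere. Directly from the definition of $M({\bf a})$ one computes $r_{k,l}(M({\bf a})) = n+1 - \sum_{m=k}^{l-1} a_m$, and tabulates the $d$-values for the candidate modules: $d_{k,l}(M({\bf a}^i))$ attains its maximum $1$ precisely at $(i, i+1)$, so $M({\bf a}^i) \le M$ is equivalent to $r_{i,i+1}(M) \le n-1$; for $i < j$, $d_{k,l}(M({\bf a}^{i,j}))$ equals $2$ precisely at $(i, j+1)$ and equals $1$ precisely at the pairs $(i, l)$ for $l \in (i, j]$, $(k, j+1)$ for $k \in (i, j]$, together with $(i-1, j+1)$ and $(i, j+2)$ when they lie in range; the degenerate case $i = j$ uses ${\bf a}^{i,i}$, which has $d = 2$ at $(i, i+1)$ and $d = 1$ only at $(i-1, i+1)$ and $(i, i+2)$.

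Two general facts about rank tuples drive the argument. \emph{Monotonicity}: $r_{k', l'}(M) \le r_{k, l}(M)$ whenever $[k, l] \subseteq [k', l']$, which rearranges to $d_{k', l'}(M) \ge d_{k, l}(M) - ((l' - k') - (l - k))$. \emph{Subadditivity of $d$}: the rank composition inequality $r_{k,m}(M) + r_{m,l}(M) \le r_{k,l}(M) + (n+1)$ for $k \le m \le l$ rearranges to $d_{k,l}(M) \le d_{k,m}(M) + d_{m,l}(M)$.

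For part (1), assume $M \le M^2$ (so $d_{k,l}(M) \le 1$ throughout) but $M \not\le M^1$, giving some pair with $d_{k_0, l_0}(M) \ge 1$. Applying subadditivity $d_{k_0, l_0}(M) \le d_{k_0, l_0 - 1}(M) + d_{l_0 - 1, l_0}(M)$, and using that both summands are $\le 1$, at least one of them must be $\ge 1$. Iterating on the width, we find some $i$ with $d_{i, i+1}(M) \ge 1$, i.e., $r_{i, i+1}(M) \le n-1$; by the tabulation above this gives $M({\bf a}^i) \le M$.

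For part (2), choose $(k_0, l_0)$ with $d_{k_0, l_0}(M) \ge 2$ of \emph{minimal width} $l_0 - k_0$, and set $(i, j) := (k_0, l_0 - 1)$ (so $i = j$ precisely when $l_0 - k_0 = 1$). The condition $d_{i, j+1}(M) \ge 2$ holds by the choice of $(k_0, l_0)$, and the outer conditions at $(i-1, j+1)$ and $(i, j+2)$ follow from monotonicity, since each such $d$ is $\ge d_{i, j+1}(M) - 1 \ge 1$. The main obstacle is the interior conditions, where minimality is essential: for each $l \in (k_0, l_0)$, subadditivity gives $d_{k_0, l_0}(M) \le d_{k_0, l}(M) + d_{l, l_0}(M)$; minimality forces $d_{l, l_0}(M) \le 1$ (else the pair $(l, l_0)$, of strictly smaller width, would contradict the choice of $(k_0, l_0)$), and hence $d_{k_0, l}(M) \ge 2 - 1 = 1$. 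The symmetric argument handles the conditions at the pairs $(k, j+1)$ for $k \in (k_0, l_0)$, completing the verification that $M({\bf a}^{i,j}) \le M$.
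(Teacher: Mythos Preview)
Your approach is essentially the paper's, repackaged neatly via the function $d_{k,l}$: both proofs reduce to the rank inequality $r_{k,l}+r_{m,m}\ge r_{k,m}+r_{m,l}$ (your ``subadditivity''), and both pivot on choosing a violating pair of minimal width. Part (1) is fine and matches the paper exactly.

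There is, however, a genuine gap in part (2). To verify $M({\bf a}^{i,j})\le M$ you must check $d_{k,l}(M)\ge d_{k,l}(M({\bf a}^{i,j}))$ for \emph{all} pairs $(k,l)$, not only those where the right-hand side is $\ge 1$. Using your formula $d_{k,l}(M({\bf a}^{i,j}))=\sum_{m=k}^{l-1}(a^{i,j}_m-1)$, one sees that for every ``deep interior'' pair with $i<k<l\le j$ the right-hand side equals $0$. So you still need $d_{k,l}(M)\ge 0$ there, and this is \emph{not} a consequence of monotonicity alone (which only bounds $d$ from below when you \emph{enlarge} the interval, whereas here you are shrinking it). The remaining inequalities with $d_{k,l}(M({\bf a}^{i,j}))<0$ do follow from monotonicity applied to the conditions you have already established, but the $d=0$ interior case does not.

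The fix is immediate with your own tools. For $i<k<l\le j$, iterate subadditivity:
\[
2\le d_{i,j+1}(M)\le d_{i,k}(M)+d_{k,l}(M)+d_{l,j+1}(M).
\]
Both outer summands have width strictly smaller than $j+1-i$, so minimality gives $d_{i,k}(M)\le 1$ and $d_{l,j+1}(M)\le 1$, whence $d_{k,l}(M)\ge 0$. This is precisely the content of the paper's inductive computation showing $r_{k,l}=n-l+k+1$ for $i<k<l<j$; your packaging makes it a two-line argument once stated, but it does need to be stated.
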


\begin{proof} To prove the first part, let $M$ degenerate to $M^2$ but not to $M^1$ and consider the corresponding rank system ${\bf r}={\bf r}(M)$. 
Degeneration of $M$ to $M^2$ is equivalent to ${\bf r}\geq{\bf r}^2$ componentwise, thus $r_{i,j}\geq n-j+i$ for all $i<j$. Non-degeneration of $M$ to $M^1$ is equivalent to ${\bf r}\not\geq{\bf r}^1$, thus there exists a pair $i<j$ such that $r_{i,j}<n-j+i+1$, which implies $r_{i,j}=n-j+i$. We claim that this equality already holds for a pair $i<j$ such that $j=i+1$. Suppose, to the contrary, that $r_{i,j}=n-j+i$ for some pair $i<j$ such that $j-i\geq 2$, and that $r_{k,l}\geq n-l+k+1$ for all $k<l$ such that $l-k<j-i$. In particular, we can choose an index $k$ such that $i<k<j$, and the previous estimate holds for $r_{i,k}$ and $r_{k,j}$. But then, the inequality (\ref{recin}), applied to the quadruple $i<k=k<j$ yields
$$2n+1-j+i=r_{i,j}+r_{k,k}\geq r_{i,k}+r_{k,j}=2n+2-j+i,$$
a contradiction. We thus find an index $i$ such that $r_{i,i+1}=n-1$, and thus $r_{k,l}\leq n-1$ for all $k\leq i<i+1\leq l$ trivially. On the other hand, it is easy to compute the rank tuple of $M({\bf a}^i)$ as
$$r_{j,k}(M({\bf a}^i))=n-1\mbox{ for }j\leq i<k,$$
and $r_{j,k}(M({\bf a}^i))=n+1$ otherwise. This proves that ${\bf r}\leq{\bf r}(M({\bf a}^i))$ as claimed.
\par
Now suppose that $M$ does not degenerate to $M^2$, and again consider the rank system ${\bf r}={\bf r}(M)\not\geq {\bf r}^2$. We thus find a pair $i<j$  such that $$r_{i,j}\leq n-j+i-1.$$
We assume this pair to be chosen such that $j-i$ is minimal with this property; thus
$$r_{k,l}\geq n-l+k\mbox{ for all }k<l\mbox{ such that }l-k<j-i.$$ For every $i<k<j$, application of the inequality (\ref{recin}) to the quadruple $i<k=k<j$ yields
$$2n-j+i=(n-j+i-1)+(n+1)\geq r_{i,j}+r_{k,k}\geq$$
$$\geq r_{i,k}+r_{k,j}\geq (n-k+i)+(n-j+k)=2n-j+i,$$
from which we conclude
$$r_{i,k}=n-k+i,\, r_{k,j}=n-k+j\mbox{ for all }i<k<j$$
and
$$r_{i,j}=n-j+i-1.$$
Now we claim that
$$r_{k,l}=n-l+k+1\mbox{ for all }i<k<l<j.$$
This condition is empty if $j-i=1$, thus we can assume $j-i\geq 2$.
We prove this by induction over $k$, starting with $k=i+1$. For every $i+1<l<j$, application of (\ref{recin}) to $i<l-1<l<l$ yields
$$r_{i+1,l-1}=r_{i+1,l-1}+r_{i,l}-r_{i,l-1}+1\geq r_{i+1,l}+1.$$
This, together with (\ref{recin}) for $i<i+1\leq j-1<j$, yields the estimate
$$n+1=r_{i+1,i+1}\geq r_{i+1,i+2}+1\geq r_{i+1,i+3}+2\geq\ldots$$
$$\ldots\geq r_{i+1,j-1}+(j-i-2)\geq r_{i+1,j}+r_{i,j-1}-r_{i,j}+(j-i-2)=n+1,$$
thus equality everywhere. Now assume that $k>i+1$, and that the claim holds for all relevant $r_{k-1,l}$. Similarly to the previous argument, we arrive at an estimate
$$n+1=r_{k,k}\geq r_{k,k+1}+1\geq r_{k,k+2}\geq\ldots$$
$$\ldots\geq r_{k,j-1}+j-k-1\geq r_{k,j}+r_{k-1,j-1}-r_{k-1,j}+j-k-1=n+1,$$
and this again yields equality everywhere. This proves the claim.

Finally, we have the trivial estimates
\begin{itemize}
\item $r_{k,l}\leq r_{i,j}=n-j+i-1$ if $k\leq i\leq j\leq l$,
\item $r_{k,l}\leq r_{i,l}=n-l+i$ if $k<i<l<j$,
\item $r_{k,l}\leq r_{k,j}=n-j+k$ if $i<k<j<l$, and trivially
\item $r_{k,l}\leq n+1$ otherwise, that is, if $k<l\leq i<j$ or $i<j\leq k<l$.
\end{itemize}
A long but elementary calculation of ${\bf r}(M({\bf a}^{i,j}))$ shows that all these estimates together prove that
$${\bf r}\leq{\bf r}(M({\bf a}^{i,j})).$$ The theorem is proved.
\end{proof}

\subsection{Orbits in the flat irreducible locus}

We introduce a combinatorial object, generalizing the rhyme schemes of \cite{Rior}, to parametrize the orbits in the flat irreducible locus.

\begin{definition} A {\it broken rhyme scheme} of length $n-1$ is a sequence $(b_1,\ldots,b_{n-1})$ of non-negative integers such that $b_1\in\{0,1\}$ and $b_{i+1}\leq\max(b_1,\ldots,b_i)+1$ for all $i\leq n-2$. It is called {\it regular} if $b_i\not=b_j$ whenever $i\not=j$ and $b_i,b_j\not=0$.
\end{definition}

For example, the broken rhyme schemes of length $3$ (the regular ones being underlined) are:
$$\underline{000},\,\underline{001},\, \underline{010},\, 011,\, \underline{012},$$
$$\underline{100},\, 101,\, \underline{102},\, 110,\, 111,$$
$$112,\, \underline{120},\, 121,\, 122,\, \underline{123}.$$

\begin{proposition}\label{proprhyme} The $G$-orbits 
degenerating to $\mathcal{O}_{{\bf r}^1}$ are parametrized by broken rhyme schemes of length $n-1$. More precisely, to a broken rhyme scheme 
$(b_1,\ldots,b_{n-1})$ we associate the orbit of the sequence $(f_1,\ldots,f_{n-1})$, where $f_i={\rm id}_V$ if $b_i=0$, and 
$f_i={\rm pr}_{b_i}$ if $b_i\not=0$; here ${\pr}_k$ denotes the linear map given by projection along the $k$-th basis vector $v_k$ of $V$.
\end{proposition}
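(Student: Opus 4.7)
The plan has three parts. First, I would check that each broken rhyme scheme is sent into the degeneration closure of $\mathcal{O}_{\mathbf{r}^1}$. For the sequence $(f_1,\ldots,f_{n-1})$ constructed from $(b_1,\ldots,b_{n-1})$, induction on $j-i$ using $\ker(f_{j-1}\circ\cdots\circ f_i)=f_i^{-1}\ker(f_{j-1}\circ\cdots\circ f_{i+1})$ together with the explicit effect of $\id$ or $\pr_k$ on a preimage yields
$$\ker(f_{j-1}\circ\cdots\circ f_i)=\sspan\{v_{b_k}:k\in[i,j-1],\ b_k\ne 0\}.$$
The right-hand side has dimension at most $j-i$, so $r_{i,j}\ge n+1-(j-i)=\mathbf{r}^1_{i,j}$, placing the orbit in the degeneration closure of $\mathcal{O}_{\mathbf{r}^1}$.

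Second, injectivity on orbits. Set $d_{i,j}:=n+1-r_{i,j}$, which by the first step equals the number of distinct nonzero entries of $(b_i,\ldots,b_{j-1})$. I would reconstruct $(b_i)$ from $d$ by a left-to-right sweep: $b_i=0$ iff $d_{i,i+1}=0$; when $b_i\ne 0$, the label is new iff $d_{1,i+1}=d_{1,i}+1$, in which case the rhyme rule forces $b_i=\max(b_1,\ldots,b_{i-1})+1$, and otherwise $b_i$ equals the unique previously-used label detected by comparing $d_{k,i+1}$ with $d_{k,i}$ on shorter suffix intervals. Hence distinct broken rhyme schemes yield distinct rank tuples, and so distinct orbits.

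Third, and most delicate, surjectivity: given any orbit with $\mathbf{r}\ge\mathbf{r}^1$, I must produce a representative in rhyme-scheme form. Since $r_{i,i+1}\ge n$ forces each $f_i$ to have corank at most one, Krull--Schmidt applied to a representative $M$ combined with the inequalities $\mathbf{r}\ge\mathbf{r}^1$ (which translate to: at most one indecomposable ends at each vertex $i<n$ and at most one begins at each $i>1$) shows that $M$ splits into $n+1$ \emph{chains}, each chain being a direct sum $U_{1,e_1}\oplus U_{e_1+1,e_2}\oplus\cdots\oplus U_{e_{r-1}+1,n}$ of total dimension vector $(1,\ldots,1)$, whose break-point sets $\{e_j\}$ together partition $\{i:f_i\text{ has corank one}\}$. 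I would then read off the rhyme scheme from the chain data: label the chains by $1,2,\ldots,n+1$ so that the chain whose first break point is leftmost receives the smallest label, and set $b_i$ to the label of the chain breaking at $i$ (or $0$ if $f_i$ is iso); the greedy choice of chain labels forces $b_{i+1}\le\max(b_1,\ldots,b_i)+1$.

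The main obstacle is making surjectivity rigorous, specifically verifying the chain decomposition from $\mathbf{r}\ge\mathbf{r}^1$ and checking that the greedy relabeling always produces a valid broken rhyme scheme. As a clean fallback that bypasses this bookkeeping, I would argue by counting: broken rhyme schemes of length $n-1$ biject with set partitions of $\{1,\ldots,n\}$ (send a partition to the scheme where $i<n$ receives label $b_i$ equal to the index of its block, with the block containing $n$ indexed by $0$ and remaining blocks indexed $1,2,\ldots$ in order of their smallest elements), so are counted by the $n$-th Bell number $B(n)$; a parallel enumeration of orbits in the degeneration closure of $\mathcal{O}_{\mathbf{r}^1}$, using the chain decomposition above, gives $\sum_{k=0}^{n-1}\binom{n-1}{k}B(k)=B(n)$, and combined with the first two parts this forces the bijection.
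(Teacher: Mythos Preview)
Your plan is correct, but it is considerably more elaborate than the paper's argument. The paper's proof pivots on a single reduction you do not use: the condition $\mathbf{r}\ge\mathbf{r}^1$ is equivalent to the much simpler condition $r_{i,i+1}\ge n$ for all $i$ (one direction is trivial; the other is immediate from subadditivity of coranks under composition). Once each $f_i$ has corank at most one, the paper normalizes in two steps: first act by $G=\GL(V)^n$ to put every $f_i$ into the form $\id_V$ or $\pr_{b_i}$ for some $b_i\in\{1,\ldots,n+1\}$; then act by the diagonal $S_{n+1}$ (permuting the fixed basis simultaneously) to bring the sequence $(b_1,\ldots,b_{n-1})$ to its unique broken-rhyme-scheme representative. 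That is the whole proof.

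Your approach, by contrast, establishes well-definedness, injectivity and surjectivity separately, using explicit kernel computations, a reconstruction of $(b_i)$ from the full rank tuple, and a Krull--Schmidt chain decomposition (with a Bell-number counting fallback). All of this is sound: the chain decomposition is exactly the module-theoretic content underlying the paper's normal-form step, and your Bell-number identity $\sum_{k=0}^{n-1}\binom{n-1}{k}B(k)=B(n)$ genuinely counts the orbits once you know the chain description. What your route buys is a more explicit grip on the invariants $d_{i,j}=n+1-r_{i,j}$ and on the Krull--Schmidt structure; what it costs is the bookkeeping you flagged as the obstacle. Note, however, that your ``fallback'' does not fully bypass that obstacle: counting the orbits still presupposes the chain decomposition, so you only save the greedy-relabeling verification, not the structural step. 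The paper avoids both by working directly with normal forms.
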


\begin{proof} An orbit degenerating to $\mathcal{O}_{{\bf r}^1}$ is uniquely determined by its rank tuple 
$(r_{i,j})_{i,j}$ satisfying $r_{i,j}\geq n+1-j+i$ for all 
$i$ and $j$. These conditions are fulfilled if and only if $r_{i,i+1}\geq n$ for all $i$, in other words if and only if 
$r_{i,i+1}\in\{n,n+1\}$ for all $i$. Thus, using the base change action, we find a point in this orbit given by linear maps $f_i={\rm id}_V$  
if $r_{i,i+1}=n+1$ (in which case we formally define $b_i=0$), and $f_i={\rm pr}_{b_i}$ for some $b_i\in\{1,\ldots,n+1\}$ if $r_{i,i+1}=n$. 
Using the natural $S_{n+1}$-action on each $V$, the resulting sequence of integers $(b_1,\ldots,b_{n-1})$ can be transformed into a broken rhyme 
scheme in a unique way.
\end{proof}

\begin{remark}
According to Theorem \ref{t2}, broken rhyme schemes parametrize the $G$-orbits in $U_{\rm flat,irr}$. 
\end{remark}

\begin{definition}\label{defpbw} 
We define the PBW locus $U_{\rm PBW}\subset U_{\rm flat,irr}$ as the union of the orbits corresponding to regular broken rhyme schemes.
\end{definition}

\begin{remark} The parametrization of orbits in the flat locus $U_{\rm flat}$ is less explicit; we mention without proof the following combinatorial description:

Consider the set $P$ of sequences $(I_1,\ldots,I_{n-1})$ of subsets of $\{1,\ldots,n+1\}$ with the following properties:
\begin{enumerate}
\item $|I_i|\leq 2$ for all $i$,
\item $|I_i\cup I_{i+1}|\leq 3$ for all $i$.
\end{enumerate}
The symmetric group $\mathfrak{S}_{n+1}$ acts on $P$ by permutation in each $I_i$. Then the $G$-orbits in $U_{\rm flat}$ are parametrized by $P/\mathfrak{S}_{n+1}$. Namely, to a sequence in $P$, we associate the sequence of linear maps $({\rm pr}_{I_1},\ldots,{\rm pr}_{I_{n-1}})$, where ${\rm pr}_I$ denotes projection along all basis vectors $v_i$ such that $i\in I$.
\end{remark}

\subsection{Transversal slice}\label{Ts}

We are interested in constructing transversal slices, that is, an affine subspace $T_{PBW}$ (resp.~$T$)  
of $R$ which is contained in $U_{\rm PBW}$ 
(resp.~$U_{\rm flat,irr}$), meets every $G$-orbit in $U_{\rm PBW}$ (resp.~$U_{\rm flat,irr}$), and intersects the minimal orbit $\mathcal{O}_{{\bf r}^1}$ 
in a single point. The construction is elementary for $T_{\rm PBW}$. Since the slice $T$ will not be needed in the rest of the paper, we 
just state the result -- the method for its construction is contained in \cite[Theorem 6.2]{Bo}.

As before, let $v_1,\dots,v_{n+1}$ be a basis of the space $V$.  
\begin{definition} Define $T\subset R$ as the subset of all tuples of linear maps $(f_1,\ldots,f_{n-1})$ such that
$$(f_i)(v_q)=\left\{\begin{array}{ccc}v_p&,& p=q\not=i+1,\\ \lambda_{p-1,q-1}v_p&,& 2\leq p\leq i+1\leq q\leq n,\\
0&,& \mbox{ otherwise }\end{array}\right.$$
for certain $(\lambda_{i,j})_{1\leq i\leq j\leq n-1}$.

Define $T_{\rm PBW}$ as the subspace of $T$ for which all $\lambda_{i,j}$ for $i<j$ are zero.
\end{definition}

For example, for $n=4$, the matrices representing the triples $(f_1,f_2,f_3)$ have the following form:

$$\left(\begin{array}{rrrrr}1&0&0&0&0\\ 0&\lambda_{11}&\lambda_{12}&\lambda_{13}&0\\ 0&0&1&0&0\\ 0&0&0&1&0\\ 0&0&0&0&1\end{array}\right)\, 
\left(\begin{array}{rrrrr}1&0&0&0&0\\ 0&1&\lambda_{12}&\lambda_{13}&0\\ 0&0&\lambda_{22}&\lambda_{23}&0\\ 0&0&0&1&0\\ 0&0&0&0&1\end{array}\right)\, 
\left(\begin{array}{rrrrr}1&0&0&0&0\\ 0&1&0&\lambda_{13}&0\\ 0&0&1&\lambda_{23}&0\\ 0&0&0&\lambda_{33}&0\\ 0&0&0&0&1\end{array}\right)$$

\begin{proposition} $T$ is a transversal slice in $U_{\rm flat,irr}$ in the above sense, and $T_{\rm PBW}$ is a transversal slice in $U_{\rm PBW}$.
\end{proposition}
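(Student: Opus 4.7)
The plan is to verify directly the three defining conditions for $T_{\rm PBW}$ to be a transversal slice in $U_{\rm PBW}$, and, following the text, to invoke the construction of \cite[Theorem 6.2]{Bo} for the parallel statement about $T\subset U_{\rm flat,irr}$.

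Two of the three conditions for $T_{\rm PBW}$ are essentially formal from the matrix description: setting all off-diagonal $\lambda_{i,j}$ to zero leaves $f_i=\diag(1,\ldots,1,\lambda_{i,i},1,\ldots,1)$ with $\lambda_{i,i}$ in position $i+1$, so $\ker f_i\in\{0,\mathbb{C} v_{i+1}\}$. The non-zero kernels are spanned by distinct standard basis vectors and are therefore linearly independent, which gives $T_{\rm PBW}\subset U_{\rm PBW}$. On the minimal orbit $\mathcal{O}_{{\bf r}^1}$ every map has rank $n$, which forces every $\lambda_{i,i}=0$ and leaves the unique tuple $(\pr_2,\pr_3,\ldots,\pr_n)$, settling the intersection condition with $\mathcal{O}_{{\bf r}^1}$.

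The main content is to exhibit, for every $G$-orbit in $U_{\rm PBW}$, a point of $T_{\rm PBW}$ in that orbit. By Proposition~\ref{proprhyme} and Definition~\ref{defpbw}, such an orbit is labelled by a regular broken rhyme scheme $(b_1,\ldots,b_{n-1})$. I would first observe that a regular scheme is determined by its support $S:=\{k:b_k\neq 0\}\subseteq\{1,\ldots,n-1\}$: an easy induction on $k$ using $b_{k+1}\leq\max(b_1,\ldots,b_k)+1$ together with the distinctness condition forces $b_k=|S\cap\{1,\ldots,k\}|$ for each $k\in S$. Given such a scheme I would propose the element of $T_{\rm PBW}$ with $\lambda_{k,k}=0$ for $k\in S$ and $\lambda_{k,k}=1$ for $k\notin S$; explicitly this is the tuple $(\tilde f_1,\ldots,\tilde f_{n-1})$ with $\tilde f_k=\pr_{k+1}$ for $k\in S$ and $\tilde f_k=\id_V$ otherwise. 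To confirm that this lies in the orbit of the canonical representative supplied by Proposition~\ref{proprhyme} I would compute the indecomposable decomposition of both associated $Q$-representations. Regularity guarantees that each basis vector $v_j$ either belongs to no kernel (contributing a summand $U_{1,n}$) or is the kernel direction at exactly one arrow $k_j\in S$ and reappears as a cokernel basis vector at $V_{k_j+1}$ (contributing $U_{1,k_j}\oplus U_{k_j+1,n}$). Both tuples therefore produce
\[
M\cong U_{1,n}^{\,n+1-|S|}\oplus\bigoplus_{k\in S}\bigl(U_{1,k}\oplus U_{k+1,n}\bigr),
\]
and so lie in the same $G$-orbit.

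The main obstacle is this orbit-matching step, whose content is precisely that the indecomposable decomposition depends only on the set $S$ and not on the bijection $S\to\{b_k:k\in S\}$; regularity is what eliminates this dependence, since without distinctness a basis index could die at several arrows and the resulting summand pattern would depend on the specific killing sequence. Everything else reduces either to routine matrix manipulation or to already-recorded facts about indecomposables of equioriented $A_n$ quivers. For the transversal slice $T$ in $U_{\rm flat,irr}$, the text explicitly defers to the general construction of \cite[Theorem 6.2]{Bo}, and I would do the same.
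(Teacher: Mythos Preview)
Your proof is correct and follows essentially the same approach as the paper: the paper's own argument for $T_{\rm PBW}$ is the single sentence ``follows immediately from the parametrization of orbits in $U_{\rm PBW}$ as stated in Definition~\ref{defpbw}, Proposition~\ref{proprhyme},'' and what you have written is precisely the unpacking of that sentence. Your observation that a regular broken rhyme scheme is determined by its support, together with the resulting indecomposable decomposition $U_{1,n}^{\,n+1-|S|}\oplus\bigoplus_{k\in S}(U_{1,k}\oplus U_{k+1,n})$, makes explicit why the orbit depends only on $S$ and hence is hit by the diagonal point of $T_{\rm PBW}$; and for $T$ you correctly defer, as the paper does, to Bongartz's construction applied to $A\oplus A^*$.
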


\begin{proof} The second claim 
follows immediately from the parametrization of orbits in $U_{\rm PBW}$ 
as stated in Definition \ref{defpbw}, Proposition \ref{proprhyme}. To prove that $T$ is a transversal slice in $U_{\rm flat,irr}$, 
one applies the construction of \cite[Theorem 6.2]{Bo} to the representation $A\oplus A^*$.
\end{proof}

The utility of this transversal slice is that we can localize the universal degenerate flag variety, that is, we can consider the restriction of $\pi:\pi^{-1}(U_{\rm PBW})\rightarrow U_{\rm PBW}$ to $\pi:\pi^{-1}(T)\rightarrow T$ without losing any information; the base is now a much smaller affine space with an obvious stratification into strata over which $\pi$ is locally trivial. 

\section{Geometry of linear degenerations - the PBW locus}\label{PBW}

In this section we study  the geometry of linear degenerations in the PBW locus.
We prove that any degeneration from the PBW locus is isomorphic to a Schubert variety; we realize each such 
Schubert variety as the closure of a highest weight orbit.

\begin{theorem} 
All linear degenerations of flag varieties in the PBW locus are orbit closures of highest weight line in the projectivized PBW degenerations 
of irreducible representations. Moreover, they are isomorphic to Schubert varieties.
\end{theorem}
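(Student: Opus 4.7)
\medskip
\noindent
\textbf{Proof plan.}

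The plan is to use the transversal slice $T_{\rm PBW}$ from Section~\ref{Ts} together with the orbit parametrization of Proposition~\ref{proprhyme} to reduce the theorem to a finite family of distinguished representatives: for each regular broken rhyme scheme $\bi=(b_1,\dots,b_{n-1})$, I work with the sequence $f_*^\bi$ where $f_i=\id_V$ if $b_i=0$ and $f_i=\pr_{b_i}$ otherwise. The two extremes $\bi=(0,\dots,0)$ (the classical flag variety $\SL_{n+1}/B$) and $\bi=(1,2,\dots,n-1)$ (Feigin's PBW degeneration $(\SL_{n+1}/B)^a$ identified with a Schubert variety in \cite{CL}) are known; the task is to treat the mixed cases uniformly.

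For the first assertion I would attach to each regular broken rhyme scheme $\bi$ a partially abelianized nilpotent Lie algebra $\n^\bi$ built from $\n^-\subset\lie{sl}_{n+1}$ as follows. Using the broken rhyme scheme, assign a $\{0,1\}$-grading to the root vectors $E_{k,l}$ ($k>l$) by declaring $\deg E_{k,l}=1$ precisely when the interval $[l,k]$ contains an index $i$ with $b_i\neq 0$ that is \emph{active} (meaning the associated projection interferes with $E_{k,l}$ in the Plücker picture). Passing to the associated graded yields a Lie algebra $\n^\bi$ which is partly classical and partly abelian; analogously, the fundamental representations $V_{\omega_j}$ acquire compatible filtrations whose associated graded modules $V_{\omega_j}^\bi$ carry an action of the corresponding unipotent group $N^\bi$. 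Denoting by $v_{\omega_j}$ the image of a highest weight vector, I would then verify that the closure of the $N^\bi$-orbit of the line $([v_{\omega_1}],\dots,[v_{\omega_n}])$ inside $\prod_j \mathbb{P}(V_{\omega_j}^\bi)$ is isomorphic to $\ff^{f_*^\bi}(V)$, by comparing the defining incidence relations $f_i^\bi U_i\subset U_{i+1}$ with the Plücker relations satisfied by points of the orbit closure; this is a direct extension of the arguments used in \cite{Feigin2} for the fully abelian case.

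For the Schubert variety statement I would follow the strategy of \cite{CL}: each projection $\pr_{b_i}$ in the sequence $f_*^\bi$ corresponds combinatorially to ``splitting'' a position in the ambient vector space, producing an embedding $\ff^{f_*^\bi}(V)\hookrightarrow \SL_{N(\bi)}/P(\bi)$ into a partial flag variety of a larger group, where both $N(\bi)$ and the parabolic $P(\bi)$ are read off from $\bi$. The image is then identified with the closure of a Schubert cell by writing an explicit Weyl group element $w(\bi)$ and checking that the two cellular decompositions match; the building blocks of this comparison are exactly the ones in \cite{CL}, but reindexed according to which $b_i$ vanish.

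The main obstacle I anticipate is the first step of the middle paragraph: the uniform construction of $\n^\bi$ and $V_{\omega_j}^\bi$, and the proof that the resulting orbit closure coincides with the quiver Grassmannian $\ff^{f_*^\bi}(V)$, rather than with some further (or lesser) degeneration. Once this identification is in place, the Schubert realization follows by transcribing the arguments of \cite{CL} with the combinatorial data dictated by the regular broken rhyme scheme.
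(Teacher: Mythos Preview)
Your overall plan --- reduce to distinguished representatives indexed by regular broken rhyme schemes, build a partially abelianized $\n^\bi$, and realize the linear degenerate flag variety both as a highest weight orbit closure and as a Schubert variety in a larger partial flag manifold --- is in the same spirit as the paper's approach, but the execution differs in an essential way, and one ingredient of your plan does not work as stated.

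\medskip
\noindent\textbf{On the grading.} Your proposed $\{0,1\}$-grading on root vectors is too coarse to produce the correct associated graded Lie algebra. Concretely, take $n\geq 3$ and $\bi$ with $b_1\neq 0$, $b_2\neq 0$ (both projections nontrivial). Under your rule $\deg f_{1,2}=1$, $\deg f_{3,3}=0$, and $\deg f_{1,3}=1$, so the bracket $[f_{1,2},f_{3,3}]=f_{1,3}$ survives in the associated graded; but in the correct degeneration this bracket must vanish because the index $2$ carries a projection. The paper instead uses an \emph{integer}-valued degree, namely
\[
\deg_\bi(f_{p,q})=q-p+1-\#\{\,i_j\in\bi\mid p\le i_j<q\,\},
\]
which is convex along root strings and gives exactly the relations $[f_{p,q},f_{q+1,r}]=0$ iff $q\in\bi$.

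\medskip
\noindent\textbf{On the logical route.} You propose to prove $\ff^{f_*^\bi}(V)\cong\overline{N^\bi\cdot[v_\lambda]}$ directly by comparing Pl\"ucker relations, and separately to prove the Schubert realization \`a la \cite{CL}. You correctly flag the first of these as the main obstacle. The paper avoids this obstacle entirely by reversing the order: it first proves the Schubert identification $\ff_{n+1}^\bi\cong X_{w_\bi}$ by an elementary geometric embedding into $\SL_{n+1+d(\bi)}/P_{\rho_\bi}$ (an explicit $w_\bi$ and a short check that the incidence conditions on $U_j$ translate into Schubert conditions), and then proves that the orbit closure $\ff^\bi(\lambda)$ is \emph{also} isomorphic to $X_{w_\bi}$ by showing $V^\bi(\lambda)\cong V_{w_\bi}(\lambda_\bi)$ as $\n^{-,\bi}$-modules. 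The latter uses two independent tools you do not mention: (i) an explicit Lie algebra isomorphism $\n^{-,\bi}\cong \n^-_{w_\bi}$ via $f_{p,q}\mapsto f_{\ell_p,\ell_q}$, showing $w_\bi$ is a \emph{triangular} element in the sense of \cite{Fou1}; and (ii) the resulting dimension equality $\dim V^\bi(\lambda)=\dim V_{w_\bi}(\lambda_\bi)$ coming from marked chain polytopes. Combining these with an explicit module map for fundamental weights yields the identification without ever touching Pl\"ucker ideals.

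In short: fix the grading to be integer-valued as above, and consider routing the orbit-closure claim through the Demazure module $V_{w_\bi}(\lambda_\bi)$ rather than through a direct ideal comparison; this is what turns the ``main obstacle'' into a dimension count plus an explicit isomorphism.
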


The goal of this section is to make this theorem explicit and to provide a proof.

\subsection{Projection sequences}
We start with another parametrization of the regular broken rhyme scheme via projection sequences. Let 
$$\mathcal{D}=\{\mathbf{i}=(i_1,i_2,\cdots,i_k)\in\mathbb{N}^k\,|\ 1\leq i_1<i_2<\cdots<i_k\leq n-1\}$$
be the set of sequences of numbers between $1$ and $n-1$ (the empty sequence $\emptyset$ is included).

\begin{lemma}
There exists a bijection between the set of regular broken rhyme schemes and $\mathcal{D}$.
\end{lemma}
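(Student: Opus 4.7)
The plan is to exhibit an explicit bijection by sending a regular broken rhyme scheme to the strictly increasing sequence of positions at which it is nonzero, and conversely sending $(i_1,\ldots,i_k)\in\mathcal{D}$ to the scheme $(b_1,\ldots,b_{n-1})$ defined by $b_{i_\ell}=\ell$ for $\ell=1,\ldots,k$ and $b_j=0$ otherwise. Both maps are clearly mutual inverses \emph{once} we verify that these are well-defined on the indicated sets; this reduces the lemma to two assertions.

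The key step, which I expect to carry almost all the content, is the following rigidity statement: in any regular broken rhyme scheme, if $j_1<j_2<\cdots<j_k$ are the positions of its nonzero entries, then necessarily $b_{j_\ell}=\ell$ for all $\ell$. I would prove this by induction on $\ell$. For $\ell=1$, the entries $b_1,\ldots,b_{j_1-1}$ all vanish, so the defining inequality $b_{j_1}\le\max(b_1,\ldots,b_{j_1-1})+1=1$ together with $b_{j_1}\neq 0$ forces $b_{j_1}=1$ (this also uses $b_1\in\{0,1\}$ in the boundary case $j_1=1$). For the inductive step, the maximum of $b_1,\ldots,b_{j_\ell-1}$ equals $\max(b_{j_1},\ldots,b_{j_{\ell-1}})=\ell-1$, hence $b_{j_\ell}\le\ell$; regularity rules out $b_{j_\ell}\in\{1,\ldots,\ell-1\}$ (already attained) and $b_{j_\ell}\neq 0$ by construction, so $b_{j_\ell}=\ell$. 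This shows that the first map is injective and that its image is entirely determined by the position set.

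For the reverse direction, given $(i_1,\ldots,i_k)\in\mathcal{D}$ I would check directly that the sequence with $b_{i_\ell}=\ell$ and zeros elsewhere is a regular broken rhyme scheme. The condition $b_1\in\{0,1\}$ holds since $b_1$ is either $0$ or (when $i_1=1$) equal to $1$. For the recursive inequality, on indices where $b_{i+1}=0$ it is trivial, while on indices $i+1=i_\ell$ the maximum of the earlier entries is $\ell-1$ (or $0$ if $\ell=1$), and $b_{i+1}=\ell\le(\ell-1)+1$. Regularity is immediate since the nonzero values are the distinct integers $1,2,\ldots,k$.

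There is no serious obstacle here; the only mild subtlety is keeping track of the boundary case $\ell=1$ (where the inductive hypothesis is vacuous and one must invoke the axiom $b_1\in\{0,1\}$ or the fact that earlier entries all vanish). Combining the two verifications yields the claimed bijection between regular broken rhyme schemes of length $n-1$ and $\mathcal{D}$.
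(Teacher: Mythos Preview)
Your proof is correct and uses exactly the same bijection as the paper, namely sending $(i_1,\ldots,i_k)\in\mathcal{D}$ to the sequence with $b_{i_s}=s$ and zeros elsewhere. The paper simply states that this map is a bijection without further justification, whereas you supply the details (in particular the rigidity statement that the nonzero entries of a regular broken rhyme scheme are forced to be $1,2,\ldots,k$ in order), so your argument is a fleshed-out version of the paper's.
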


\begin{proof}
For $\bi=(i_1,\cdots,i_k)\in\mathcal{D}$, we define a sequence $\mathbf{b}=(b_1,\cdots,b_{n-1})$ by:
\[ b_{i}= \begin{cases}
       s & \text{if } i=i_s\ \text{for some }s=1,\cdots,k;\\
       0 & \text{ otherwise}.
     \end{cases} \]
Then $\mathbf{b}$ is a regular broken rhyme scheme and it is clear that the above map is a bijection.
\end{proof}

For $\mathbf{i}=(i_1,i_2,\cdots,i_k)\in\mathcal{D}$ we denote the projection sequence $\text{pr}_{\bi}$ by:
$$\text{pr}_{\bi}=(\underbrace{\id,\cdots,\id}_{i_1-1\ \text{copies}},\text{pr}_{i_1+1},\underbrace{\id,\cdots,\id}_{i_2-i_1-1\ \text{copies}},\text{pr}_{i_2+1},\cdots,\text{pr}_{i_k+1},\underbrace{\id,\cdots,\id}_{n-1-i_k\ \text{copies}}).$$
We write $\text{pr}_{\bi}=(f_1,f_2,\cdots,f_{n-1})$ where $f_i$ is either the projection along a line or identity.
Then for any sequence $\bi\in\mathcal{D}$, the corresponding linear degenerate flag variety $\ff_{n+1}^{\,\bi}$ is the $f_*$-linear
degenerate flag variety ${\rm Fl}^{\text{pr}_{\bi}}(V)$.
\par
Recall that $A=\mathbb{C}Q$ is the path algebra of the quiver $Q$. We consider the quiver Grassmannian ${\Gr}_{\mathbf{e}}(M^\bi)$, where 
$\mathbf{e}={\bf dim}(A)=(1,2,\cdots,n)$ and
$$M^\bi=P_1^{\oplus n+1-k}\oplus \left(\bigoplus_{m=1}^k I_{i_m}\oplus P_{i_m+1}\right).$$

The following proposition holds by rephrasing the definition (see for example \cite[Proposition 2.7]{CFR1}).

\begin{proposition}
We have an isomorphism of projective varieties $$\ff_{n+1}^{\,\bi}\stackrel{\sim}{\longrightarrow}{\Gr}_{\mathbf{e}}(M^\bi).$$
\end{proposition}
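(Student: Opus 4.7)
The statement is essentially an unpacking of the identification between linear degenerate flag varieties and quiver Grassmannians, so the strategy is to check that the module $M^{\bi}$ and the representation of $Q$ encoded by the tuple $\text{pr}_{\bi}$ have the same isomorphism class; once this is established, the isomorphism of projective varieties is the one already described in Section \ref{Sec:Loci}.

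\textbf{Plan of proof.} First I would invoke the setup of Section \ref{Sec:Loci}: under the identification of $R$ with $R_{\mathbf{d}}(Q)$ for $\mathbf{d}=(n+1,\ldots,n+1)$, a tuple $f_*=(f_1,\ldots,f_{n-1})$ corresponds to a $Q$-representation $M_{f_*}$, and the $\pi$-fiber ${\rm Fl}^{f_*}(V)$ is literally the quiver Grassmannian ${\rm Gr}_{\mathbf{e}}(M_{f_*})$ for $\mathbf{e}=(1,2,\ldots,n)$. Thus to prove the proposition it suffices to exhibit an isomorphism of $Q$-representations $M_{\text{pr}_{\bi}} \cong M^{\bi}$, since quiver Grassmannians of isomorphic modules are isomorphic projective varieties.

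\textbf{Comparing representations via rank tuples.} By Gabriel's theorem, two representations of the equioriented type $A_n$ quiver are isomorphic if and only if they have the same rank tuple $(r_{i,j})_{i<j}$ (or equivalently the same Krull-Schmidt multiplicities $m_{i,j}$, which are recovered from the $r_{i,j}$ by the formula already displayed in Section \ref{Sec:Loci}). I would therefore compute both rank tuples. On the $\text{pr}_{\bi}$ side, the composite $f_{j-1}\circ\cdots\circ f_i$ is the product of those projections $\text{pr}_{i_m+1}$ with $i\leq i_m<j$; since these project along distinct basis vectors $v_{i_m+1}$, the rank drop equals the number of such indices, giving
\[
r_{i,j}(\text{pr}_{\bi}) \;=\; n+1 - \#\{m \,:\, i\leq i_m <j\}.
\]
On the $M^{\bi}$ side, using that $\dim\text{Hom}$ criterion (or just the explicit description of $U_{a,b}$), a summand $U_{a,b}$ contributes $1$ to $r_{i,j}$ precisely when $a\leq i\leq j\leq b$. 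Summing the contributions of $P_1^{\oplus n+1-k}$, the $I_{i_m}=U_{1,i_m}$ (which contribute when $i_m\geq j$), and the $P_{i_m+1}=U_{i_m+1,n}$ (which contribute when $i_m<i$), one obtains
\[
r_{i,j}(M^{\bi}) \;=\; (n+1-k) + \#\{m: i_m\geq j\} + \#\{m: i_m<i\} \;=\; n+1-\#\{m: i\leq i_m<j\},
\]
which matches.

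\textbf{Assembly.} Equality of rank tuples gives $M_{\text{pr}_{\bi}}\cong M^{\bi}$, and choosing any such isomorphism induces the claimed isomorphism of projective varieties $\ff_{n+1}^{\,\bi} = {\rm Gr}_{\mathbf{e}}(M_{\text{pr}_{\bi}}) \xrightarrow{\sim} {\rm Gr}_{\mathbf{e}}(M^{\bi})$. I do not foresee a genuine obstacle here: the identification ${\rm Fl}^{f_*}(V)\cong{\rm Gr}_{\mathbf{e}}(M_{f_*})$ is built into the definitions of Section \ref{Sec:Loci}, and the only computational content is the parallel rank-tuple calculation above, which is completely elementary. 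The sole point that requires care is keeping the indexing convention of $\text{pr}_{\bi}$ straight, so that the projection at position $i_m$ of the sequence is $\text{pr}_{i_m+1}$ and not $\text{pr}_{i_m}$, which is precisely why the summand $P_{i_m+1}$ (rather than $P_{i_m}$) appears in the Krull-Schmidt decomposition of $M^{\bi}$.
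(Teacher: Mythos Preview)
Your proposal is correct and is precisely the detailed unpacking of what the paper means when it says the proposition ``holds by rephrasing the definition'' (with a reference to \cite[Proposition 2.7]{CFR1}): the identification $\ff_{n+1}^{\,\bi}={\rm Gr}_{\mathbf e}(M_{\text{pr}_{\bi}})$ is built into Section~\ref{Sec:Loci}, and your rank-tuple computation verifies $M_{\text{pr}_{\bi}}\cong M^{\bi}$. There is nothing to add.
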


\begin{example}
\begin{enumerate}
\item For $\bi=\emptyset\in\mathcal{D}$, $\ff_{n+1}^{\,\bi}\stackrel{\sim}{\longrightarrow}\SL_{n+1}/B$ is the complete flag variety.
\item For $\bi=(1,2,\cdots,n-1)\in\mathcal{D}$, $\ff_{n+1}^{\,\bi}\stackrel{\sim}{\longrightarrow}(\SL_{n+1}/B)^a$ is the degenerate flag variety \cite{Feigin2}.
\end{enumerate}
\end{example}

For $\bi=(i_1,i_2,\cdots,i_k)\in\mathcal{D}$, we denote $d(\bi):=k$. 
\begin{remark}\label{Rem:Schubert}
As shown in \cite{CFRSchubert}, every quiver Grassmannian associated with a representation M of the quiver Q, which is equioriented of type A, can be naturally embedded into a flag manifold. As shown in loc. cit. the image of such an embedding is stable under the action of a Borel subgroup if and only if M is a catenoid. 
In this case the irreducible components are Schubert varieties. A quiver Grassmannian associated with a catenoid is called a Schubert quiver Grassmannian. 
Since $M^{\bi}$ is the direct sum of a projective and an injective Q--representations, it is a catenoid. Since the quiver Grassmannian $\Gr_\mathbf{e}(M^\bi)$ is irreducible, it is a Schubert variety. In the following section, we will describe these Schubert varieties explicitly. 
\end{remark}

\subsection{Realization as Schubert varieties}
We fix $\bi=(i_1,i_2,\cdots,i_k)\in\mathcal{D}$ as before. Define $\mathbf{h}_{\bi}=(h_1,h_2,\cdots,h_n)$ by: 
$h_1=0$ and for any $s=2,3,\cdots,n$, 
$$h_s=\#\{t\,|\,1\leq t\leq k\text{ and }i_t <s\}.$$
We consider $\mathfrak{sl}_{n+1+d(\bi)}$ with Weyl group $W_{\bi}$ generated by the reflections $s_i$ 
with respect to the simple roots $\alpha_i$ of $\mathfrak{sl}_{n+1+d(\bi)}$.
We define $w_{\bi}=w_nw_{n-1}\cdots w_1\in W_{\bi}$ as follows:
$$w_k=s_{h_k+1}s_{h_k+2}\cdots s_{h_k+k}.$$
We denote $\ell_j := h_j + j$. 
\par
Let $\mathfrak{h}_{\bi}$ be the Cartan subalgebra of $\mathfrak{sl}_{n+1+d(\bi)}$ consisting of diagonal matrices and let  $\mathfrak{h}:=\mathfrak{h}_{\emptyset}$ be the Cartan subalgebra of $\mathfrak{sl}_{n+1}$. We define a map $\Psi^{\bi}:\mathfrak{h}^*\ra \mathfrak{h}_{\bi}^*$ by:
\begin{equation}
{\Psi^{\bi}}(\varpi_j)=\varpi_{\ell_j}.
\end{equation}
Let $\mathcal{P}^+$ ($\mathcal{P}_{\,\bi}^+$) be the set of dominant integral weights for $\mathfrak{sl}_{n+1}$ ($\mathfrak{sl}_{n+1+d(\bi)}$). 
For any $\lambda\in\mathcal{P}^+$, we define 
$$\lambda_{\bi}:={\Psi^{\bi}}(\lambda)\in \mathcal{P}_{\,\bi}^+\subset \mathfrak{h}_{\bi}^*.$$
Let $\rho=\varpi_1+\varpi_2+\cdots+\varpi_n$ and $\rho_{\bi}:=\Psi^\bi(\rho)\in\mathfrak{h}_{\bi}^*$.

We let $X_{w_\bi}$ denote the Schubert variety in ${\SL}_{n+1+d(\bi)}/P_{\rho_{\bi}}$ associated to $w_{\bi}$ where $P_{\rho_{\bi}}$ is the parabolic subgroup of ${\SL}_{n+1+d(\bi)}$ 
stabilizing the weight $\rho_{\bi}$.

\begin{theorem}\label{Thm:flagSchubert}
We have an isomorphism of projective varieties
$$\ff_{n+1}^\bi\stackrel{\sim}{\longrightarrow} X_{w_\bi}.$$
\end{theorem}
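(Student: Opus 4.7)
The plan is to invoke the Schubert-variety identification of quiver Grassmannians for catenoids from \cite{CFRSchubert}, and then to pin down the resulting ambient flag variety and Weyl group element explicitly. By Remark~\ref{Rem:Schubert}, $M^\bi$ is a catenoid and $\ff_{n+1}^\bi\cong\Gr_\mathbf{e}(M^\bi)$ is irreducible, so \cite{CFRSchubert} already produces a closed embedding of $\ff_{n+1}^\bi$ into some partial flag variety with image an irreducible Schubert variety; what remains is to show that this flag variety is $\SL_{n+1+d(\bi)}/P_{\rho_\bi}$ and that the Schubert element is $w_\bi$.

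For the ambient flag variety, I would count the indecomposable summands of $M^\bi$ with multiplicity: $(n+1-k)$ copies of $P_1=I_n$, $k$ copies of $I_{i_m}$, and $k$ copies of $P_{i_m+1}$, for a total of $n+1+k=n+1+d(\bi)$. In the construction of \cite{CFRSchubert} this provides the dimension of the universal ambient space $W$, equipped with a distinguished basis indexed by these summands. A subrepresentation $(U_1,\ldots,U_n)$ of $M^\bi$ is then sent to the flag $F_\bullet$ in $W$ whose $s$-th term is $U_s$ enlarged by the basis vectors of $W$ indexed by the $P_{i_m+1}$-summands with $i_m<s$, yielding $\dim F_s=s+h_s=\ell_s$. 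Hence the image lies in $\SL_{n+1+d(\bi)}/P_{\rho_\bi}$.

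The Schubert element would then be identified by locating the distinguished $T$-fixed point of the image corresponding to the dense cell of $X_{w_\bi}$. This point corresponds to a $T$-stable subrepresentation $(U^\circ_1,\ldots,U^\circ_n)$ of $M^\bi$, where each $U^\circ_s$ is the span of a canonical choice of $s$ basis vectors among the indecomposable summands present at vertex $s$, selected to maximize Bruhat position. Translating the resulting flag in $W$ into a permutation of $\{1,\ldots,n+1+d(\bi)\}$ produces $w_\bi$. The decomposition $w_\bi=w_n\cdots w_1$ with $w_s=s_{h_s+1}\cdots s_{h_s+s}$ emerges from computing $w_s$ as the permutation that inserts the $s$-th layer into the $(s-1)$-th layer: the $s$ newly introduced basis vectors must each step past the $h_s$ basis vectors of the already-``born'' $P_{i_m+1}$-summands, producing $s$ consecutive simple transpositions starting at position $h_s+1$. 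Finally, verifying that $w_n\cdots w_1$ is a reduced word of length $\sum_{s=1}^n s=n(n+1)/2$ matches $\dim X_{w_\bi}$ with $\dim\ff_{n+1}^\bi$ from Theorem~\ref{tc}, so the irreducible closed image contained in $X_{w_\bi}$ of this dimension is necessarily $X_{w_\bi}$.

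The main obstacle is the combinatorial content of the third paragraph: describing the CFRSchubert embedding explicitly for the specific catenoid $M^\bi$, identifying the distinguished $T$-fixed subrepresentation, and matching the resulting permutation against the prescribed $w_\bi=w_n\cdots w_1$. The interplay between the $k$ pairs $(I_{i_m},P_{i_m+1})$ (each responsible for a ``gap'' in the $Q$-module structure at position $i_m$) and the positions $h_s+1,\ldots,h_s+s$ of the simple reflections appearing in $w_s$ is where the genuine calculation is performed, and where the reducedness of the expression $w_n\cdots w_1$ must be checked.
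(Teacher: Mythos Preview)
Your approach is essentially the paper's: both invoke the Schubert identification from \cite{CFRSchubert} (the paper states the needed special case as a stand-alone proposition, building an explicit isomorphism $\zeta'$ rather than an inclusion plus dimension count), and both then reduce the theorem to a combinatorial verification that the Schubert datum is $w_\bi$.

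The difference lies in how that combinatorial verification is packaged. Rather than locating a distinguished $T$-fixed point and reading off its permutation, the paper formulates the criterion as a set condition: the quiver Grassmannian is $X_w$ precisely when
\[
w(\{1,\ldots,\ell_j\})=\{1,\ldots,h_j\}\cup\{n+1+h_j,\ldots,n+h_j+j\}\qquad(j=1,\ldots,n),
\]
which encodes exactly the containments $\mathrm{span}\{v_1,\ldots,v_{h_j}\}\subset U_j\subset\mathrm{span}\{v_1,\ldots,v_{n+1+h_j}\}$ defining the image. The paper then checks this for $w_\bi$ via a short inductive computation (Proposition~\ref{prop:action}) of $w_\bi(\ell_j)$ and, when $\ell_j=\ell_{j-1}+2$, of $w_\bi(\ell_j-1)$, distinguishing the two cases according to whether $j-1\in\bi$. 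This is precisely the ``combinatorial content'' you flag as the main obstacle, and it turns out to be a two-case induction of a few lines each; reducedness of $w_n\cdots w_1$ falls out because each $w_s$ contributes $s$ reflections with strictly increasing indices starting at $h_s+1$, and the $h_s$ are weakly increasing. Your dimension argument at the end is not needed once the set condition is verified, since the paper's proposition already delivers an isomorphism, not merely an inclusion.
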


Before giving the proof of the theorem, we examine it in several known examples.

\begin{example}
\begin{enumerate}
\item For $\bi=\emptyset\in\mathcal{D}$, 
$$w_{\bi}=s_1s_2\cdots s_ns_1s_2\cdots s_{n-1}\cdots s_1s_2s_1=w_0$$
is the longest element in the Weyl group of $\mathfrak{sl}_{n+1}$. In this case, $X_{w_\bi}\subset {\SL}_{n+1}/\text{P}_{\rho}={\SL}_{n+1}/\text{B}$ is the complete flag variety.
\item For $\bi=(1,2,\cdots,n-1)\in\mathcal{D}$, 
$$w_\bi=(s_{n}s_{n+1}\cdots s_{2n-1})(s_{n-1}s_{n}\cdots s_{2n-3})\cdots (s_3s_4s_5)(s_2s_3)s_1,$$ 
and $X_{w_\bi}\subset {\SL}_{2n}/\text{P}_{\varpi_1+\varpi_3+\cdots+\varpi_{2n-1}}$ is isomorphic to the degenerate flag variety, as shown in \cite{CL}.
\end{enumerate}
\end{example}

The following example illustrates the above construction.

\begin{example}
Let $\g=\mathfrak{sl}_5$ be the simple Lie algebra of type $A_4$. In the following table we list all PBW linear degenerations of the complete flag variety of $\g$.
{\begin{tiny}
\begin{center}
    \begin{tabular}{ | c | c | c | c | c |}
    \hline
     $\bi\in\mathcal{D}$ & Projection & $w_\bi$ & $\mathbf{h}_\bi$ & $M$ \\ \hline
   $\emptyset$ & $(\id,\id,\id)$ & $s_1s_2s_3s_4s_1s_2s_3s_1s_2s_1$ & $(0,0,0,0)$ & $P_1^{\oplus 5}$  \\ \hline
  $\{1\}$ & $(\pr_2,\id,\id)$ & $s_2s_3s_4s_5s_2s_3s_4s_2s_3s_1$ & $(0,1,1,1)$ & $P_1^{\oplus 4}\oplus I_1\oplus P_2$  \\ \hline
  $\{2\}$ & $(\id,\pr_3,\id)$ & $s_2s_3s_4s_5s_2s_3s_4s_1s_2s_1$ & $(0,0,1,1)$ & $P_1^{\oplus 4}\oplus I_2\oplus P_3$  \\ \hline
  $\{3\}$ & $(\id,\id,\pr_4)$ & $s_2s_3s_4s_5s_1s_2s_3s_1s_2s_1$ & $(0,0,0,1)$ & $P_1^{\oplus 4}\oplus I_3\oplus P_4$  \\ \hline
  $\{1,2\}$ & $(\pr_2,\pr_3,\id)$ & $s_3s_4s_5s_6s_3s_4s_5s_2s_3s_1$ & $(0,1,2,2)$ & $P_1^{\oplus 3}\oplus I_1\oplus P_2\oplus I_2\oplus P_3$ \\ \hline
  $\{1,3\}$ & $(\pr_2,\id,\pr_4)$ &  $s_3s_4s_5s_6s_2s_3s_4s_2s_3s_1$ & $(0,1,1,2)$ & $P_1^{\oplus 3}\oplus I_1\oplus P_2\oplus I_3\oplus P_4$ \\ \hline
  $\{2,3\}$ & $(\id,\pr_3,\pr_4)$ & $s_3s_4s_5s_6s_3s_4s_5s_1s_2s_1$ & $(0,0,1,2)$ & $P_1^{\oplus 3}\oplus I_2\oplus P_3\oplus I_3\oplus P_4$ \\ \hline
   $\{1,2,3\}$ & $(\pr_2,\pr_3,\pr_4)$ & $s_4s_5s_6s_7s_3s_4s_5s_2s_3s_1$ & $(0,1,2,3)$ & $P_1^{\oplus 2}\oplus I_1\oplus P_2\oplus I_2\oplus P_3\oplus I_3\oplus P_4$ \\ \hline
    \end{tabular}
\end{center}
\end{tiny}}
\end{example}

For the proof of Theorem \ref{Thm:flagSchubert}, we will need the following general result.
\par
Let $\ff_{(\ell_1,\dots,\ell_n)}$ be the partial flag variety for ${\SL}_{n+1+d(\bi)}$ consisting of collections of subspaces 
$(U_i)_{i=1}^n$ of dimensions $\ell_1,\dots,\ell_n$.
\begin{proposition}
Let $w\in W_{\bi}$ be an element satisfying the following condition for all $j=1,\dots, n$:
\[
w(1,\dots,\ell_j)=\{1,\dots,\ell_j-j\}\cup \{n+1+\ell_j-j,n+1+\ell_j-(j-1),\dots, n+1+\ell_j-1\}.
\]
Then ${\Gr}_{\mathbf{e}}(M^\bi)$ is isomorphic to the Schubert variety $X_w$ attached to $w$
in the partial flag variety $\ff_{(\ell_1,\dots,\ell_n)}$.
\end{proposition}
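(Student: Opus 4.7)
The plan is to construct an explicit closed embedding $\phi:\Gr_\mathbf{e}(M^\bi)\hookrightarrow\ff_{(\ell_1,\ldots,\ell_n)}$ and identify its image with $X_w$. Set $\tilde V=\mathbb{C}^{n+1+k}$ with a distinguished basis indexed by the indecomposable summands of $M^\bi$, ordered so that positions $1,\ldots,k$ correspond to $I_{i_1},\ldots,I_{i_k}$, positions $k+1,\ldots,n+1$ to the $n+1-k$ copies of $P_1$, and positions $n+2,\ldots,n+1+k$ to $P_{i_1+1},\ldots,P_{i_k+1}$. For each vertex $j$, the summands living at $j$ occupy exactly the contiguous block of positions $h_j+1,\ldots,n+1+h_j$, so $M^\bi_j$ is identified with the subspace $W_j\subset\tilde V$ spanned by this block, and the structure map $f_j:W_j\to W_{j+1}$ acts by projection along the basis vectors of $I$-summands dying at $j$. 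Let $D_j\subset\tilde V$ be spanned by the $I$-summands already dead at vertex $j$; by construction $D_j=F_{h_j}$ in the standard flag. For $U_\bullet\in\Gr_\mathbf{e}(M^\bi)$ I define $\phi(U_\bullet)=(\tilde U_j)$ by $\tilde U_j:=U_j+D_j$. Since $W_j\cap D_j=0$, the sum is direct and $\dim\tilde U_j=j+h_j=\ell_j$.

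The flag condition $\tilde U_j\subset\tilde U_{j+1}$ follows from $D_j\subset D_{j+1}$ together with the decomposition $v=v_{\rm live}+v_{\rm dies}$ of any $v\in U_j\subset W_j$ into its components living at $j+1$ and dying at $j$: $v_{\rm live}=f_j(v)\in U_{j+1}$ by the subrepresentation condition, while $v_{\rm dies}$ lies in the span of $I$-summands dying at $j$, hence in $D_{j+1}$. Injectivity of $\phi$ is immediate from the reconstruction $U_j=\tilde U_j\cap W_j$, so $\phi$ is a closed embedding into $\ff_{(\ell_1,\ldots,\ell_n)}$. To show $\phi(\Gr_\mathbf{e}(M^\bi))\subset X_w$, one verifies the Schubert inequalities $\dim(\tilde U_j\cap F_i)\geq\#\{a\leq\ell_j:w(a)\leq i\}$. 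Since $F_{h_j}\subset\tilde U_j\subset F_{n+1+h_j}$, for $i\geq h_j$ we have $\tilde U_j\cap F_i=(U_j\cap F_i)+D_j$, giving $\dim(\tilde U_j\cap F_i)=\dim(U_j\cap F_i)+h_j$. Conditions outside the ``middle range'' of $i$ are immediate from these inclusions; within the middle range, the required lower bound on $\dim(U_j\cap F_i)$ coincides exactly with the Grassmann inequality $\dim U_j+\dim(W_j\cap F_i)-\dim W_j$, using $\dim(W_j\cap F_i)=i-h_j$, so it is automatic for every $U_j\subset W_j$.

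To upgrade inclusion to equality, I compare dimensions. Since $M^\bi$ is a catenoid, $\Gr_\mathbf{e}(M^\bi)$ is irreducible by Remark~\ref{Rem:Schubert}, and its dimension is $n(n+1)/2$ by Theorem~\ref{tc}. On the other hand, the decomposition $w=w_n w_{n-1}\cdots w_1$ with $w_j=s_{h_j+1}\cdots s_{h_j+j}$ is a reduced expression yielding the minimal representative in $W_{\bi}/W_{P_{\rho_\bi}}$, with total length $\sum_{j=1}^n j=n(n+1)/2=\dim X_w$. Being a closed embedding of an irreducible projective variety into an irreducible projective variety of the same dimension, $\phi$ identifies $\Gr_\mathbf{e}(M^\bi)$ with $X_w$. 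The principal technical obstacle is the middle-range combinatorial verification: one must check precisely that $\#\{a\leq\ell_j:w(a)\leq i\}-h_j$ agrees with the Grassmann bound $\dim U_j+\dim(W_j\cap F_i)-\dim W_j$ for all relevant $(j,i)$, which requires a delicate parsing of the explicit formula describing $w(\{1,\ldots,\ell_j\})$; a secondary point is confirming that $w_n\cdots w_1$ is indeed reduced and defines the minimal coset representative for the parabolic $W_{P_{\rho_\bi}}$.
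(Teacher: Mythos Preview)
Your embedding $\phi(U_\bullet)=(U_j+D_j)$ is exactly the paper's map (the paper writes it as $U'_j\mapsto\pi_j^{-1}(U'_j)$, meaning the preimage inside $F_{n+1+h_j}$), so the core construction coincides. The difference is in how equality with $X_w$ is established.

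The paper is more direct: it observes that the hypothesis on $w$ says precisely that $X_w$ is the ``sandwich'' variety
\[
\{(U_j)_j \mid F_{h_j}\subset U_j\subset F_{n+1+h_j},\ \dim U_j=\ell_j,\ U_j\subset U_{j+1}\},
\]
and then exhibits the explicit inverse $\tilde U_j\mapsto \tilde U_j\cap W_j$. This gives the isomorphism immediately, with no Schubert-inequality bookkeeping and no dimension count.

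Your route---inclusion via Schubert inequalities, then equality by dimension---works, but two points deserve correction. First, your assertion that in the middle range the Schubert bound ``coincides exactly'' with the Grassmann bound is not accurate: from the hypothesis, for $h_j<i\le n+h_j$ one has $\#\{a\le\ell_j:w(a)\le i\}=h_j$, so the condition is simply $\dim(\tilde U_j\cap F_i)\ge h_j$, which is immediate from $D_j=F_{h_j}\subset\tilde U_j$. The Grassmann bound is generally \emph{stronger}, not equal, so your flagged ``principal technical obstacle'' dissolves. Second, your dimension argument invokes the reduced decomposition $w_\bi=w_n\cdots w_1$ and the irreducibility/dimension of $\Gr_{\mathbf e}(M^\bi)$; these are established elsewhere in the paper (and the reduced expression is for the specific element $w_\bi$, while the proposition is stated for any $w$ in the given coset). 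This is not wrong, but it makes the argument depend on external facts that the paper's direct bijection avoids entirely.
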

\begin{proof}
This is a slight generalization of \cite[proof of Theorem~1.2]{CL} and a particular case of \cite[Theorem~3.4 and Proposition~3.8]{CFRSchubert}. We briefly recall the proof, for convenience of the reader. 
Let $v_1,\dots,v_{n+1+d(\bi)}$ be the standard basis of $\mathbb{C}^{n+1+d(\bi)}$.
Assume that the conditions on $w$ are fulfilled. Then the variety $X_w$ consists of collections of subspaces
\[
U_1\subset U_2\subset\dots\subset U_n\subset \mathbb{C}^{n+1+d(\bi)} 
\]
such that $\dim U_j=\ell_j$ and $\mathrm{span}_{\mathbb{C}}\{v_1,\dots,v_{n+1+\ell_j-j}\}\supset U_j\supset \mathrm{span}_{\mathbb{C}}\{v_1,\dots,v_{\ell_j-j}\}$. For every $j=1,\cdots, n$, we consider the space $M'_j=\mathrm{span}_{\mathbb{C}}\{v_{\ell_j-j+1},\dots,v_{n+1+\ell_j-j}\}$ (in particular,
$\dim M'_j=n+1$ and $U'_j\subset M'_j$) and we define the linear map $f_j:M'_j\to M'_{j+1}$ by $f_jv_a=v_a$, if 
$v_a\in M'_{j+1}$ and $f_jv_a=0$, otherwise. The representation $((M'_j)_{j=1}^n,\ (f_j)_{j=1}^{n-1})$ of $Q$ is isomorphic to $M^\bi$. In particular, the quiver Grassmannian $\Gr_\mathbf{e}(M^\bi)$ is isomorphic to the variety of collections $(U_j')_{j=1}^n$ of subspaces of $\mathbb{C}^{n+1+d(\bi)}$ such that 
\begin{enumerate}
\item $U_j'\subseteq M_j'$;
\item $\dim U_j'=j$; 
\item $f_jU'_j\subset U'_{j+1}$
for all $j$.
\end{enumerate}
We denote by $\pi_j:\mathbb{C}^{n+1+d(\bi)}\rightarrow M'_j$ the canonical projection.
The inclusion map 
$$
\zeta: \prod_{j=1}^n {\Gr}_{j}(M_j)\rightarrow \prod_{j=1}^n {\Gr}_{\ell_j}(\mathbb{C}^{n+1+d(\bi)}):\; (U_j')\mapsto (\pi_j^{-1}(U_j'))
$$
restricts to the required isomorphism $\zeta':{\Gr}_\mathbf{e}(M^\bi)\rightarrow X_w$.
\end{proof}

To prove Theorem \ref{Thm:flagSchubert}, it suffices to apply the following proposition.

\begin{proposition}\label{prop:action} 
The action of $w_{\bi}$ on $\{1, \cdots, \ell_n\}$ is given by:
\begin{enumerate}
\item If $\ell_j = \ell_{j-1} + 1$, then $w_{\bi} (\ell_j) = h_j + (n-j+2)$.
\item If $\ell_j = \ell_{j-1} + 2$, then  $w_{\bi} (\ell_j-1) = h_j$ and  $w_{\bi} (\ell_j) = h_j + n+1$ .
\end{enumerate}
\end{proposition}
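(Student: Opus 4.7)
The plan is to compute $w_{\bi}(\ell_j)$, and in case (2) also $w_{\bi}(\ell_j - 1)$, by tracking the image under each successive factor of $w_{\bi} = w_n w_{n-1} \cdots w_1$. The first step is the elementary observation that, as a permutation of $\{1,\ldots,n+1+d(\bi)\}$, the factor $w_m = s_{h_m+1} s_{h_m+2} \cdots s_{h_m+m}$ equals the cyclic shift on the $(m+1)$-element interval $\{h_m+1, h_m+2, \ldots, \ell_m+1\}$: it sends $x \mapsto x+1$ for $h_m+1 \leq x \leq \ell_m$, sends $\ell_m+1 \mapsto h_m+1$, and fixes everything else. Writing $v_0$ for the initial point and $v_m = w_m(v_{m-1})$, the proposition reduces to computing $v_n$.

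The central claim is that each trajectory falls into three regimes in order: a \emph{frozen} phase in which $v_{m-1}$ lies outside the range of $w_m$, a single wrap or kickoff step around $m = j-1$ or $m = j$, and a final \emph{linear} phase in which $v_m = v_{m-1}+1$. I would handle the sub-cases in turn. In case (1), where $\ell_j = \ell_{j-1}+1$ and $h_j = h_{j-1}$: the bound $\ell_m + 1 \leq \ell_{j-1} < \ell_j$ for $m \leq j-2$ keeps $\ell_j$ frozen; $w_{j-1}$ wraps $\ell_{j-1}+1$ to $h_{j-1}+1 = h_j+1$; $w_j$ steps it to $h_j+2$; and the linear phase gives $v_n = h_j + (n-j+2)$. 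For $w_{\bi}(\ell_j - 1)$ in case (2), the value $\ell_j - 1 = \ell_{j-1}+1$ is frozen through $w_{j-2}$, wrapped by $w_{j-1}$ to $h_{j-1}+1 = h_j$, and then fixed by every $w_m$ with $m \geq j$ because $h_j < h_m + 1$. For $w_{\bi}(\ell_j)$ in case (2), the value $\ell_j$ is frozen through $w_{j-1}$ (since $\ell_m + 1 \leq \ell_{j-1}+1 = \ell_j - 1$), stepped by $w_j$ to $\ell_j + 1$, and reaches $\ell_j + (n-j+1) = h_j + n + 1$ via the linear phase.

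The main technical step, common to the linear phases of both cases, is to verify that the running value $v_{m-1}$ lies strictly inside $[h_m+1, \ell_m]$, so that $w_m$ genuinely increments it rather than fixing it or wrapping. This reduces to the elementary bound
\[
h_m - h_j \leq m - j \qquad \text{for } m \geq j,
\]
together with the monotonicity $h_m \geq h_j$ and the crude inequality $h_j \leq j - 1$. All three are immediate from the definition $h_s = \#\{t : i_t < s\}$ and the constraint $i_1 \geq 1$: the first estimate holds because $h_m - h_j = \#\{t : j \leq i_t < m\}$ counts a subset of the $m-j$ integers in $[j,m)$. I expect the main obstacle to be organizational: keeping the three regimes and the four separate trajectories straight while confirming the range conditions in each. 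Once these estimates are in place, a one-line induction on $m$ closes the linear phase and yields both formulas simultaneously.
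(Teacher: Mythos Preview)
Your approach is essentially the same as the paper's: both track the image of $\ell_j$ (and $\ell_j-1$) through the factors $w_1,\ldots,w_n$ using the cyclic-shift description of $w_m$, identifying the initial fixed phase, the wrap at $w_{j-1}$, and the subsequent incrementing phase. The paper's version is terser---it simply asserts the key transition values (e.g.\ $w_{j-1}(\ell_j)=h_j+1$ and $w_k(h_j+(k-j+1))=h_j+(k-j+2)$ for $k\geq j$) without spelling out the range checks---whereas you make the inequalities $h_m-h_j\leq m-j$ and $h_m\geq h_j$ explicit to confirm that the running value stays inside $[h_m+1,\ell_m]$ during the linear phase. That extra care is not a different method, just a more complete write-up of the same computation.
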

\begin{proof}
In the first case, $\ell_{j-1} = \ell_j - 1$, we have (since the $\ell_i$ are strictly increasing and each $w_{j}$  is a sequence of strictly increasing simple reflections):
$$
w_{\bi}(\ell_j) = w_{n} \cdots w_{j} w_{j-1} \cdots w_{1} (\ell_j)= w_{n} \cdots w_{j} w_{j-1} (\ell_j).
$$
We have $w_{j - 1}(\ell_j) = h_j+1$ and $w_{k}(h_j + (k-j+1)) = h_j + (k - j +2)$ for all $k \geq j$, the claim follows.
\par
In the second case, $\ell_{j-1} = \ell_j - 2$, we have
$$
w_{\bi}(\ell_j-1) = w_{n} \cdots w_{j} w_{j-1} \cdots w_{1} (\ell_j-1)= w_{n} \cdots w_{j} w_{j-1} (\ell_j -1).
$$
But then $w_{j-1}(\ell_j -1) = h_{j-1}+1 = h_j$ and $w_{k}(h_j) = h_j$ for all $k \geq j$. Further we have
$$
w_{\bi}(\ell_j) = w_{n} \cdots w_{j} w_{j-1} \cdots w_{1} (\ell_j)= w_{n} \cdots w_{j} w_{j-1} (\ell_j).
$$
But $w_{j -1}(\ell_j) = \ell_j$ and $w_{j+k}(\ell_j + k) = \ell_j + k+1$ for all $k \geq 0$.
\end{proof}


\subsection{New gradings and filtrations}

Let $\mathfrak{sl}_{n+1}=\lie n^+\oplus \lie h \oplus \lie n^-$ be a fixed triangular decomposition of $\mathfrak{sl}_{n+1}$,
where $\lie h$ consists of diagonal matrices. Let $\alpha_1,\dots,\alpha_n$ be the simple roots of $\mathfrak{sl}_{n+1}$. Then the set of
positive roots is given by $\{\alpha_{p,q}=\alpha_p+\dots +\alpha_q,\ 1\le p\le q\le n\}$. We start with defining a grading on 
$\lie n^-$ associated to a fixed sequence $\mathbf{i}=(i_1,i_2,\cdots,i_k)\in\mathcal{D}$.
\par
For a positive root $\alpha=\alpha_{p,q}$ we write $f_{p,q}:=f_\alpha$. A degree function on $\lie n^-$ can be identified with a 
sequence $(t_{p,q})_{1\leq p\leq q\leq n}$ where $t_{p,q}$ is the degree of $f_{p,q}$.
We set $T_0=(t_{p,q}^0)_{1\leq p\leq q\leq n}$ where $t_{p,q}^0=q-p+1$ is the height of the root $\alpha_{p,q}$. For a number 
$l=1,2,\cdots,n-1$ we define a map $D_l$ sending a degree function to another by:
$$D_l(T)=T',$$
where $T=(t_{p,q})$ and $T'=(t_{p,q}')$ are degree functions satisfying
\begin{equation}
{t_{p,q}'}=\left\{\begin{matrix} t_{p,q}-1,& \text{ if }p\leq l<q;\\
t_{p,q}, & \text{otherwise}.\end{matrix}\right.
\end{equation}
We define $T^{\,\bi}=D_{i_k}\circ D_{i_{k-1}}\circ\cdots\circ D_{i_1}(T_0)$ and denote $T^{\,\bi}=(t_{p,q}^{\,\bi})$.
\par
\par
Consider the following grading on $\lie n^-$ defined by
$$\text{deg}_{\,\bi}(f_{p,q})=t_{p,q}^{\,\bi}.$$
\par
The following proposition is clear by definition.

\begin{proposition}\label{Prop:degree}
  The degree of $f_{p,q}$ is given by
  \[ \mathrm{\deg}_{\bi}( f_{p, q}) = \begin{cases}
       1 & \text{if } p = q;\\
       q - p + 1 - \# \{i_j \mid p \leq i_j < q\} & \text{ if } p\neq q.
     \end{cases} \]
\end{proposition}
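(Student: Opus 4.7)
The plan is to prove the formula by directly unwinding the definition of the composition $D_{i_k}\circ D_{i_{k-1}}\circ\cdots\circ D_{i_1}$ acting on $T_0$. The key observation is that each map $D_l$ acts on the family of coordinates $(t_{p,q})$ entry by entry: it subtracts $1$ from $t_{p,q}$ when $p\le l<q$ and leaves $t_{p,q}$ untouched otherwise. Since each $D_l$ modifies each coordinate independently (and by a fixed amount depending only on whether the indicator condition $p\le l<q$ holds), the operators $D_l$ all commute, and their composition simply subtracts from $t_{p,q}^0$ the number of indices $j\in\{1,\dots,k\}$ such that $D_{i_j}$ decreases $t_{p,q}$, i.e.\ such that $p\le i_j<q$.

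The main steps, in order, will be the following. First, I would record the elementary fact that for any sequence $l_1,\dots,l_m$ of indices in $\{1,\dots,n-1\}$ and any degree function $T=(t_{p,q})$,
\[
\bigl(D_{l_m}\circ\cdots\circ D_{l_1}(T)\bigr)_{p,q}=t_{p,q}-\#\{\,s\in\{1,\dots,m\}\ :\ p\le l_s<q\,\},
\]
which is immediate by induction on $m$ from the definition of a single $D_l$. Second, I would apply this identity to $T=T_0$ and the sequence $l_1=i_1,\dots,l_k=i_k$ from $\bi$, obtaining
\[
t_{p,q}^{\bi}=t_{p,q}^0-\#\{\,j\ :\ p\le i_j<q\,\}=q-p+1-\#\{\,j\ :\ p\le i_j<q\,\}.
\]
Third, I would separate the two cases of the proposition: if $p=q$ then the condition $p\le i_j<q$ is vacuous, so $t_{p,p}^{\bi}=t_{p,p}^0=1$; if $p<q$ then the displayed identity is already the stated formula, after rewriting the set $\{j:p\le i_j<q\}$ as $\{i_j:p\le i_j<q\}$ (the map $j\mapsto i_j$ being injective because $\bi$ has strictly increasing entries).

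There is no serious obstacle here: the statement is essentially a bookkeeping consequence of the definition of $D_l$, and the only thing one needs to be slightly careful about is checking that the $D_l$'s really do commute (so that the order $D_{i_k}\circ\cdots\circ D_{i_1}$ imposed by $i_1<\cdots<i_k$ matches the unordered cardinality count in the final formula). This is evident from the entrywise description above. Hence the whole proof is a one-line observation once the inductive lemma on compositions of $D_l$'s is in hand.
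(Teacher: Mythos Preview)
Your proposal is correct and follows exactly the approach implicit in the paper, which simply declares the proposition ``clear by definition'' without further argument. Your inductive unwinding of the composition $D_{i_k}\circ\cdots\circ D_{i_1}$ is precisely the bookkeeping the paper leaves to the reader.
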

As a direct consequence we have:
\begin{corollary}
  The Lie algebra $\lie n^-$ is filtered with respect to the grading
  $\deg_{ \bi}$.
\end{corollary}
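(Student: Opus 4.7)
Since the bracket on $\lie n^-$ is spanned by brackets of root vectors, it suffices to check that for any pair of negative root vectors $f_{p,q}$ and $f_{r,s}$ whose commutator is nonzero, one has
\[
\deg_{\bi}\bigl([f_{p,q},f_{r,s}]\bigr)\le \deg_{\bi}(f_{p,q})+\deg_{\bi}(f_{r,s}).
\]
In $\mathfrak{sl}_{n+1}$ the only nonzero brackets among negative root vectors come from concatenating adjacent root intervals, so (up to a sign and swapping the two factors) we may assume $r=q+1$ and the bracket equals $\pm f_{p,s}$ with $p\le q<q+1\le s$. The plan is therefore to compute
\[
\Delta := \deg_{\bi}(f_{p,q})+\deg_{\bi}(f_{q+1,s})-\deg_{\bi}(f_{p,s})
\]
directly using the explicit formula from Proposition~\ref{Prop:degree} and to verify $\Delta\ge 0$.

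The main (and essentially only) calculation is the generic case $p<q<q+1<s$, where each of the three degrees is given by the second branch of the formula. Substituting and using the additivity
\[
\#\{j\mid p\le i_j<q\}+\#\{j\mid q+1\le i_j<s\}+\#\{j\mid i_j=q\}=\#\{j\mid p\le i_j<s\},
\]
the length contributions $(q-p+1)+(s-q)-(s-p+1)$ cancel and one obtains the clean identity
\[
\Delta=\#\{j\mid i_j=q\}\ge 0.
\]
This quantity is $0$ or $1$ depending on whether $q$ appears in $\bi$.

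The remaining step is to handle the degenerate cases $p=q$, $q+1=s$, or both, where one (or two) of the three degrees are given by the first branch of the formula (value $1$). In each case the same bookkeeping—treating $\deg_{\bi}(f_{p,p})=1$ as the specialization of $q-p+1-\#\{j:p\le i_j<q\}$ at $q=p$, and similarly for $\deg_{\bi}(f_{q+1,s})$ and for $\deg_{\bi}(f_{p,s})$—yields exactly the same conclusion $\Delta=\#\{j\mid i_j=q\}\ge 0$. This step is essentially a careful verification rather than a genuine obstacle; the only subtlety is that when $p=q$ and $q+1=s$ the target vector $f_{p,s}=f_{q,q+1}$ still satisfies $p\neq s$, so its degree is given by the second branch and one gets $\Delta=2-(2-\#\{j:i_j=q\})\ge 0$ as desired. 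Once $\Delta\ge 0$ is established in all cases, the filtration property follows immediately.
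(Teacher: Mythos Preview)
Your proposal is correct and follows exactly the approach the paper intends: the corollary is stated without proof as a direct consequence of Proposition~\ref{Prop:degree}, and your argument simply makes explicit the one-line computation showing $\deg_{\bi}(f_{p,q})+\deg_{\bi}(f_{q+1,s})-\deg_{\bi}(f_{p,s})=\#\{j\mid i_j=q\}\ge 0$. Note that the case split is in fact unnecessary, since the formula $\deg_{\bi}(f_{p,q})=q-p+1-\#\{j\mid p\le i_j<q\}$ gives the value $1$ when $p=q$ as well; your verification of the degenerate cases therefore just confirms this uniformity.
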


Let $\lie n^{-, \bi}$ be the associated graded Lie algebra: the Lie algebra $\lie n^-$ is partially abelianized in $\lie n^{-, \bi}$. By Proposition \ref{Prop:degree}, we get the defining relations of $\lie n^{-, \bi}$ (for $p \leq s$):
\begin{equation}\label{Eq:Abel}
 [f_{p, q}, f_{s,r}] = \begin{cases}
     0 & \text{if } s \neq q + 1;\\
     0 & \text{if } s = q + 1 \text{ and }  \exists
     \hspace{0.75em} i_j = q;\\
     f_{p, r} & \text{else}.
   \end{cases}
\end{equation}
	
\bigskip

The grading on $\lie n^-$ induces a filtration $F_{\,\bi}$ on $U(\lie n^-)$ by letting
$$U_s(\lie n^-):=\text{span}\{x_1x_2\cdots x_t\,|\, x_j\in\lie n^-,\,\, \sum_{j=1}^t\text{deg}_{\,\bi}(x_j)\leq s\}.$$
We let ${\rm gr}_{F_{\,\bi}}U(\lie n^-)$ denote the associated graded algebra. Then it is clear that 
$${\rm gr}_{F_{\,\bi}}U(\lie n^-)\cong U(\lie n^{-,\bi})$$
is again an enveloping algebra.

Let $V(\lambda)$ be the irreducible representation of $\mathfrak{sl}_{n+1}$ of highest weight 
$\lambda\in\mathcal{P}^+$ and a highest weight vector $v_\lambda$. 
The filtration on $U(\lie n^-)$ induces a filtration on $V(\lambda)$ by defining 
$$V_s(\lambda):=U_s(\lie n^-).v_\lambda.$$
Let $V^{\,\bi}(\lambda)$ denote the associated graded vector space and let $v_\lambda^\bi$ 
be the image of $v_\lambda$ in $V^{\,\bi}(\lambda)$. 
It is clear that $V^{\,\bi}(\lambda)$ is a cyclic $U(\lie n^{-,\bi})$-module generated by $v_\lambda^{\,\bi}$.


\subsection{Realization as highest weight orbits}

Let $N^{\,\bi}$ be the connected linear unipotent algebraic group having $\lie n^{-,\bi}$ as Lie algebra. Then $N^{\,\bi}$ acts on 
$V^\bi(\lambda)$ and we define the closure of the highest weight orbit by
$$\ff^{\,\bi}(\lambda):=\overline{N^{\,\bi}\cdot[v_\lambda^{\,\bi}]}\subset \mathbb{P}(V^{\,\bi}(\lambda)).$$
In fact,
$$N^{\,\bi}\cdot[v_\lambda^{\,\bi}]=\left\{\left.\exp \left(\sum_{\alpha\in\Delta_+}c_\alpha f_\alpha\right)\cdot[v_\lambda^{\,\bi}]\,\right|\,c_\alpha\in\mathbb{C}\right\}.$$

For a projection sequence $\bi$ let $V_{w_{\bi}}(\lambda_{\bi})$ be the Demazure module inside $V(\lambda_{\bi})$, corresponding
to the Weyl group element  $w_{\bi}$. In more details, let
$\mathfrak{sl}_{ \bi} : = \mathfrak{sl}_{n+1+d(\bi)}$ and let 
$$
\mathfrak{sl}_{\bi} = \lie b_{\bi}^+ \oplus \lie n^-_{\bi} = \lie n_{\bi}^+
\oplus \lie h_{\bi} \oplus \lie n_{\bi}^-
$$ 
be the triangular decomposition. Then $V_{w_{\bi}}(\lambda_{\bi})$ is a cyclic $\lie b_{\bi}^+$
module inside $V(\lambda_{\bi})$ with the cyclic vector of weight $w_{\bi}(\lambda_{\bi})$. 
The main result of this subsection is the following theorem.

\begin{theorem}\label{Thm:HighestOrbit}
\begin{enumerate}
\item For any $\lambda\in\mathcal{P}^+$, $V^{\,\bi}(\lambda)\cong V_{w_{\bi}}(\lambda_{\bi})$ as $\lie n^{-,\bi}$-modules. 
\item For a regular weight $\lambda\in\mathcal{P}^+$, there exists an isomorphism of projective varieties
$$X_{w_{\,\bi}}\stackrel{\sim}{\longrightarrow} \ff^{\,\bi}(\lambda).$$
\end{enumerate}
\end{theorem}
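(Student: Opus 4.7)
The plan is to first establish part~(1) via an explicit identification of $V^{\bi}(\lambda)$ with the Demazure module realized through a Lie algebra embedding, and then to deduce part~(2) by transporting the highest weight orbit closure across this identification.

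To prove part~(1), I would construct a Lie algebra embedding $\iota\colon \lie n^{-,\bi}\hookrightarrow \lie n^+_{\bi}$ as follows. For each pair $p\leq q$, Proposition~\ref{prop:action} describes the action of $w_{\bi}$ on $\{\ell_{p-1}+1,\ldots,\ell_q\}$; from this data one can read off a unique positive root $\beta(p,q)$ of $\mathfrak{sl}_{n+1+d(\bi)}$ characterized by the property that $w_{\bi}^{-1}\beta(p,q)<0$ and that the root vector $E_{\beta(p,q)}$ sends $v_{w_{\bi}\lambda_{\bi}}$ into the weight space of weight $w_{\bi}\lambda_{\bi}+w_{\bi}(\alpha_p+\cdots+\alpha_q)$. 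Setting $\iota(f_{p,q}):=E_{\beta(p,q)}$ and sending $v_\lambda^{\bi}\mapsto v_{w_{\bi}\lambda_{\bi}}$ would then extend to a surjection $V^{\bi}(\lambda)\twoheadrightarrow V_{w_{\bi}}(\lambda_{\bi})$ of cyclic $U(\lie n^{-,\bi})$-modules.

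The main obstacle is to verify that $\iota$ respects the partial-abelianization relations~(\ref{Eq:Abel}); that is, for $p\leq q<s\leq r$, the bracket $[\iota(f_{p,q}),\iota(f_{s,r})]$ computed inside $\mathfrak{sl}_{n+1+d(\bi)}$ must act on $V_{w_{\bi}}(\lambda_{\bi})$ as $\iota(f_{p,r})$ when $s=q+1$ and no $i_j$ equals $q$, and must act as zero otherwise. This matching is precisely controlled by the dichotomy of Proposition~\ref{prop:action}: when no $i_j=q$ we have $\ell_{q+1}=\ell_q+1$ and the positive roots $\beta(p,q)$, $\beta(q+1,r)$ compose to $\beta(p,r)$; when some $i_j=q$ we have $\ell_{q+1}=\ell_q+2$, and the extra simple reflection inserted in $w_{\bi}$ forces $\beta(p,q)+\beta(q+1,r)$ either to fail to be a root or to have a negative $w_{\bi}^{-1}$-image, so that the bracket's action factors through a vector annihilating $v_{w_{\bi}\lambda_{\bi}}$ and vanishes on the Demazure quotient. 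A PBW basis argument, combined with the Demazure character formula applied under the embedding $\Psi^{\bi}$, would then yield $\dim V^{\bi}(\lambda)=\dim V(\lambda)=\dim V_{w_{\bi}}(\lambda_{\bi})$, upgrading the surjection to an isomorphism.

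For part~(2), the variety $\ff^{\bi}(\lambda)=\overline{N^{\bi}\cdot[v_\lambda^{\bi}]}$ depends only on the cyclic $U(\lie n^{-,\bi})$-module $V^{\bi}(\lambda)$ together with its highest weight line, so the isomorphism of part~(1) transports it to the closure of $\exp(\iota(\lie n^{-,\bi}))\cdot[v_{w_{\bi}\lambda_{\bi}}]$ inside $\mathbb{P}(V_{w_{\bi}}(\lambda_{\bi}))$. Since $\iota(\lie n^{-,\bi})$ is by construction the span of those root vectors $E_\beta\in\lie n^+_{\bi}$ that do not annihilate $v_{w_{\bi}\lambda_{\bi}}$, this coincides with the closure of the full $\exp(\lie n^+_{\bi})$-orbit of the extremal weight line $[v_{w_{\bi}\lambda_{\bi}}]$. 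Regularity of $\lambda$ ensures that the stabilizer of $[v_{\lambda_{\bi}}]$ in $\SL_{n+1+d(\bi)}$ is exactly $P_{\rho_{\bi}}$, so by the standard Borel-orbit description of Schubert varieties this orbit closure is precisely $X_{w_{\bi}}\subset \SL_{n+1+d(\bi)}/P_{\rho_{\bi}}$, concluding the argument.
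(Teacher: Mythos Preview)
Your outline follows the paper's strategy---embed $\lie n^{-,\bi}$ into the larger Lie algebra, compare dimensions, and transport orbits---but there are three places where real work is missing.

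First, the verification of the bracket relations is both overcomplicated and slightly wrong in mechanism. The paper's map is $\eta\colon f_{p,q}\mapsto f_{\ell_p,\ell_q}$ into $\lie n^-_{\bi}$ (then conjugated by $w_{\bi}$ into $\lie n^+_{\bi}$). With this description the check is a one-line Lie algebra identity: $[f_{\ell_p,\ell_q},f_{\ell_{q+1},\ell_r}]$ is nonzero precisely when $\ell_{q+1}=\ell_q+1$, i.e.\ when $q\notin\bi$. There is no need for a module-level argument (``factors through a vector annihilating $v_{w_{\bi}\lambda_{\bi}}$''); the bracket is literally zero in $\mathfrak{sl}_{n+1+d(\bi)}$ when $q\in\bi$, and your alternative case ``negative $w_{\bi}^{-1}$-image'' does not occur.

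Second, and more seriously, writing ``would then extend to a surjection $V^{\bi}(\lambda)\twoheadrightarrow V_{w_{\bi}}(\lambda_{\bi})$'' hides the entire difficulty. The module $V^{\bi}(\lambda)$ is a \emph{quotient} of $U(\lie n^{-,\bi})$ by relations that are not given explicitly, so you cannot simply define a map out of it by specifying where the cyclic vector goes; you must show those relations annihilate $v_{w_{\bi}\lambda_{\bi}}$. The paper circumvents this by first building explicit linear isomorphisms $\zeta\colon V^{\bi}(\varpi_r)\to w_{\bi}^{-1}V_{w_{\bi}}(\varpi_{\ell_r})w_{\bi}$ for each fundamental weight (via an embedding $\Lambda^r\mathbb{C}^{n+1}\hookrightarrow\Lambda^{\ell_r}\mathbb{C}^{\ell_n+1}$), checking by hand that these intertwine the $\lie n^{-,\bi}$-actions, and only then assembling the general case through the Cartan component of the tensor product. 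This explicit step is the technical heart of the proof and cannot be skipped.

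Third, the dimension equality $\dim V^{\bi}(\lambda)=\dim V_{w_{\bi}}(\lambda_{\bi})$ is not a formal consequence of the Demazure character formula under $\Psi^{\bi}$. The paper invokes a nontrivial external result (Fourier's theorem on PBW-degenerated Demazure modules for \emph{triangular} Weyl group elements): both sides are counted by lattice points of marked chain polytopes over isomorphic posets. You would need either to reproduce that argument or to supply a genuinely different dimension comparison. Your part~(2) is essentially correct and matches the paper once part~(1) is in place, though note that the fact ``$\iota(\lie n^{-,\bi})$ is the span of the $E_\beta$ not annihilating $v_{w_{\bi}\lambda_{\bi}}$'' is itself a lemma (the paper's Proposition on $\lie n^+_{w_{\bi}}=w_{\bi}\lie n^-_{w_{\bi}}w_{\bi}^{-1}$ together with the cyclicity Lemma) and should be stated and proved.
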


Combining Theorem \ref{Thm:HighestOrbit} with Theorem \ref{Thm:flagSchubert}, we conclude that
a PBW linear degenerate flag variety is the closure of the highest weight orbit.

\begin{corollary}
For a regular weight $\lambda\in\mathcal{P}^+$, there exists an isomorphism of projective varieties
$$\ff_{n+1}^\bi\stackrel{\sim}{\longrightarrow} \ff^{\,\bi}(\lambda).$$
\end{corollary}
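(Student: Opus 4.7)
The plan is to deduce this corollary as an immediate consequence of the two main theorems of the section, whose statements provide the bridge between the two geometric realizations. Specifically, Theorem \ref{Thm:flagSchubert} already identifies the linear degenerate flag variety $\ff_{n+1}^{\,\bi}$ with the Schubert variety $X_{w_{\bi}}\subset \SL_{n+1+d(\bi)}/P_{\rho_{\bi}}$, while Theorem \ref{Thm:HighestOrbit}(2) identifies the same Schubert variety $X_{w_{\bi}}$ with the closure $\ff^{\,\bi}(\lambda)$ of the highest weight orbit of $N^{\,\bi}$ in $\mathbb{P}(V^{\,\bi}(\lambda))$ whenever $\lambda$ is regular.

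Concretely, I would just write down the composition
\[
\ff_{n+1}^{\,\bi}\;\xrightarrow{\sim}\;X_{w_{\bi}}\;\xrightarrow{\sim}\;\ff^{\,\bi}(\lambda),
\]
where the first arrow is the isomorphism from Theorem \ref{Thm:flagSchubert} and the second is the one from Theorem \ref{Thm:HighestOrbit}(2). Both arrows are isomorphisms of projective varieties, so their composition is as well, which is exactly the statement to be proved. The regularity hypothesis on $\lambda$ is used only to invoke Theorem \ref{Thm:HighestOrbit}(2); it plays no role in the identification with $X_{w_{\bi}}$.

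There is no real obstacle here since the content of the corollary has already been packaged into the two preceding theorems; the one thing worth flagging is the dependence on $\lambda$. Namely, the left hand side $\ff_{n+1}^{\,\bi}$ does not depend on $\lambda$, while the right hand side a priori does, so the corollary is implicitly asserting that all the orbit closures $\ff^{\,\bi}(\lambda)$ for regular $\lambda$ are mutually isomorphic as projective varieties. This is automatic from the proof strategy, since the intermediate object $X_{w_{\bi}}$ is a fixed Schubert variety in a fixed partial flag manifold that does not involve $\lambda$, so the Plücker-type embedding $\ff^{\,\bi}(\lambda)\hookrightarrow\mathbb{P}(V^{\,\bi}(\lambda))$ merely provides different projective realizations of one and the same abstract variety.
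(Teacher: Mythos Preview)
Your proposal is correct and matches the paper's approach exactly: the paper simply states that the corollary follows by combining Theorem~\ref{Thm:flagSchubert} with Theorem~\ref{Thm:HighestOrbit}, which is precisely the composition $\ff_{n+1}^{\,\bi}\xrightarrow{\sim}X_{w_{\bi}}\xrightarrow{\sim}\ff^{\,\bi}(\lambda)$ you wrote down.
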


The strategy of the proof of Theorem \ref{Thm:HighestOrbit} is the following: the second part is a corollary 
of the first part. To prove the first part of the theorem, we first define the $\lie n^{-,\bi}$-module 
structure on $V_{w_{\bi}}(\lambda_{\bi})$, then we show that both $V^\bi(\lambda)$ and 
$V_{w_{\bi}}(\lambda_{\bi})$ have the same dimension (Section~\ref{dim-subsection}), finally we 
construct an explicit map between the spaces and prove its surjectivity (Section~\ref{map-subsection}).

\subsection{Proof of Theorem~\ref{Thm:HighestOrbit}: Dimension}\label{dim-subsection}
Here we first provide a $\lie n^{-,\bi}$-module structure on $V_{w_{\bi}}(\lambda_{\bi})$.  
We denote the set of positive roots of $\mathfrak{sl}_{ \bi}$ by $\Delta^+_{ \bi}$. Recall that the 
Weyl group $W_\bi$ acts on $\mathfrak{sl}_{\bi}$ by conjugation.
\par
The $\lie n^{-,\bi}$-module structure on $V_{w_{\bi}}(\lambda_{\bi})$ is defined by: for any $1\leq p\leq q\leq n$ and $v\in V_{w_{\bi}}(\lambda_{\bi})$,
$$f_{p,q}\cdot v:=w_\bi^{-1}f_{\ell_p,\ell_q}w_\bi\cdot v.$$
We will see below that $f_{p,q}\mapsto f_{\ell_p,\ell_q}$ is a  morphism of Lie algebras 
and $w_\bi^{-1}f_{\ell_p,\ell_q}w_\bi\in \lie b_{\bi}^+$ hence the module structure is well-defined.

\bigskip

Recall that $\ell_k : = h_k + k$. Let $\lie n^-_{w_\bi}$ denote the Lie subalgebra of $\mathfrak{sl}_{\bi}$ spanned by the root vectors 
$$\{f_{\ell_p,\ell_q}\,|\,1\leq p\leq q\leq n\}.$$

\begin{proposition}\label{Lem:eta}
The linear map $\eta:\lie n^{-, \bi}\ra \lie n_{w_\bi}^-$ given by   
$$f_{p, q} \mapsto f_{\ell_p, \ell_q}$$
is an isomorphism of Lie algebras.
\end{proposition}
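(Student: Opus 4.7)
The plan is to show that $\eta$ is simultaneously a linear bijection and a bracket-preserving map, with both parts ultimately reducing to a short combinatorial identity about the sequence $\ell_p = h_p + p$.

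For the linear isomorphism, note that as a vector space $\lie n^{-,\bi}$ coincides with $\lie n^-$ (only the bracket is deformed via the associated graded construction from the grading $\deg_{\bi}$), so $\{f_{p,q}\}_{1\leq p\leq q\leq n}$ is a basis. On the target side, the elements $f_{\ell_p, \ell_q}$ are root vectors of $\mathfrak{sl}_{\bi}$ attached to pairwise distinct positive roots $\alpha_{\ell_p, \ell_q}$ — pairwise distinct because the sequence $(\ell_p)_{p=1}^n$ is strictly increasing — so they are linearly independent and form a basis of $\lie n^-_{w_\bi}$ by definition. Hence $\eta$ is a linear bijection.

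The bracket compatibility rests on the following observation. For indices with $p\leq s$, one has
\[
\ell_s - \ell_q = (s-q) + (h_s - h_q),
\]
and both summands are non-negative when $s \geq q$. Consequently $\ell_s = \ell_q+1$ if and only if $s = q+1$ and $h_{q+1} = h_q$, which by the definition of $h$ is equivalent to $q \notin \{i_1,\ldots,i_k\}$. In $\mathfrak{sl}_{\bi}$, the standard type-$A$ commutation relation gives that $[f_{\ell_p, \ell_q}, f_{\ell_s, \ell_r}]$ (assuming $\ell_p \leq \ell_s$) is non-zero exactly when $\ell_s = \ell_q + 1$, in which case it equals $\pm f_{\ell_p, \ell_r}$; the other possible matching condition $\ell_p = \ell_r+1$ is ruled out by the chain $p \leq s \leq r$. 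Comparing with equation (\ref{Eq:Abel}), the non-zero cases on both sides coincide, and the result on each side is (up to a sign) $\eta(f_{p,r}) = f_{\ell_p, \ell_r}$. Chevalley signs can be absorbed by a diagonal rescaling of the basis vectors $f_{\ell_p,\ell_q}$, which does not alter the subalgebra $\lie n^-_{w_\bi}$.

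Together these show $\eta$ is a Lie algebra isomorphism. As a byproduct, the span $\lie n^-_{w_\bi}$ is actually closed under the $\mathfrak{sl}_{\bi}$-bracket, hence a genuine Lie subalgebra, which also validates the $\lie n^{-,\bi}$-module structure on $V_{w_{\bi}}(\lambda_{\bi})$ used above (the companion claim $w_\bi^{-1} f_{\ell_p,\ell_q} w_\bi \in \lie b^+_{\bi}$ is a separate check using Proposition \ref{prop:action}, but is orthogonal to the Lie-algebraic statement at hand). The only real obstacle in the argument is bookkeeping with the indices $\ell_p$; everything else is routine.
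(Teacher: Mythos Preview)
Your argument is correct and follows essentially the same route as the paper's proof: both reduce the bracket compatibility to the equivalence $\ell_s=\ell_q+1 \Leftrightarrow s=q+1$ and $q\notin\bi$, then compare with \eqref{Eq:Abel}. You are in fact slightly more careful than the paper, since you explicitly check the linear bijection and rule out the second possible nonvanishing case $\ell_p=\ell_r+1$. The only quibble is your hedge about Chevalley signs: in the conventions already fixed by \eqref{Eq:Abel} (and the standard matrix realization of $\mathfrak{sl}_{\bi}$), the structure constants on both sides are $+1$, so no rescaling is needed and the specific map $f_{p,q}\mapsto f_{\ell_p,\ell_q}$ is the honest isomorphism.
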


\begin{proof}
We have to compare the commutator between $f_{\ell_p,\ell_q}$ and $f_{\ell_r,\ell_s}$ with (\ref{Eq:Abel}). We have for $p \leq s$
$$
[f_{\ell_p, \ell_q}, f_{\ell_s, \ell_r}] = \begin{cases} 0 & \text{ if } \ell_s\neq \ell_q+1 \\
f_{\ell_p, \ell_r} & \text{ else} 
\end{cases}
$$
So this is non-zero if and only if $\ell_q +1 = \ell_s$. Writing $\ell_q = h_k + k$, this implies that $h_{k+1} + (k+1) = h_k + k$, hence $h_{k+1} = h_k$ and therefore $k \notin \bi$. 
Compared to (\ref{Eq:Abel}), these are exactly the same relations as for $\lie n^{-, \bi}$ via the map $f_{p, q} \mapsto f_{\ell_p, \ell_q}$.
\end{proof}

We consider the following subspace of $\lie b_{ \bi}^+$:
$$\lie n^+_{w_\bi} : = {\sspan}_{\mathbb{C}}\{ e_{\alpha}\,|\,\alpha \in \Delta^+_{ \bi}, w_{ \bi}^{-1}(\alpha) < 0 \} \subset \lie b_{ \bi}^+.$$
Our goal is to show that $\lie n_{w_\bi}^+=w_\bi\, \lie n_{w_\bi}^- w_{\bi}^{-1}$.
For the proof we need the following lemma.

\begin{lemma}\label{hq}
Let $1\leq t\leq \ell_q$ be an integer satisfying 
$$h_q+1\leq w_{q-1}\cdots w_1(t)\leq h_q+q.$$
Then there exists $1\leq p\leq q$ such that $t=\ell_p$.
\end{lemma}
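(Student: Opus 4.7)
The plan is to prove the lemma by induction on $q$, using explicit cycle structure of each $w_k$. The key preliminary observation is that $w_k = s_{h_k+1}s_{h_k+2}\cdots s_{h_k+k}$ acts on $\{1,\ldots\}$ as the cycle $(h_k+1,h_k+2,\ldots,h_k+k+1)$, namely $w_k(h_k+j) = h_k+j+1$ for $1\le j\le k$, $w_k(h_k+k+1) = h_k+1$, and $w_k$ fixes everything else. A short check of this by direct computation (start from the rightmost reflection $s_{h_k+k}$) makes the rest of the proof purely combinatorial. Together with this, observe that $w_{q-2}\cdots w_1$ involves only reflections $s_a$ with $a\le h_{q-2}+(q-2) \le h_{q-1}+q-2 < \ell_{q-1}$, so it preserves the interval $[1,\ell_{q-1}]$ and fixes everything strictly larger.

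I would prove by induction on $q$ the following slightly stronger statement: the set of $t\in[1,\ell_q]$ satisfying $h_q+1\le w_{q-1}\cdots w_1(t)\le h_q+q$ is exactly $\{\ell_1,\ldots,\ell_q\}$. The base case $q=1$ is immediate since $\ell_1 = 1$ and $w_0$ is empty. For the inductive step, let $v = w_{q-2}\cdots w_1(t)$ for $t\in[1,\ell_q]$. Split into two cases depending on whether $q-1\in\bi$:
\begin{itemize}
\item If $q-1\notin\bi$, then $h_q = h_{q-1}$ and $\ell_q = \ell_{q-1}+1 = h_{q-1}+q$. Since $w_{q-1}$ cycles the interval $[h_{q-1}+1,h_{q-1}+q] = [h_q+1,h_q+q]$, we have $w_{q-1}(v)\in[h_q+1,h_q+q]$ iff $v\in[h_{q-1}+1,h_{q-1}+q]$. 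By the induction hypothesis, the preimages of $[h_{q-1}+1,h_{q-1}+q-1]$ in $[1,\ell_{q-1}]$ are exactly $\ell_1,\ldots,\ell_{q-1}$, and since $w_{q-2}\cdots w_1$ fixes $\ell_q = h_{q-1}+q$, that point itself contributes the remaining preimage.
\item If $q-1\in\bi$, then $h_q = h_{q-1}+1$ and $\ell_q = \ell_{q-1}+2$. One checks directly that $w_{q-1}(v)\in[h_q+1,h_q+q]=[h_{q-1}+2,h_{q-1}+q+1]$ iff $v\in[h_{q-1}+1,h_{q-1}+q-1]\cup\{\ell_q\}$. By induction these preimages in $[1,\ell_{q-1}]$ are $\ell_1,\ldots,\ell_{q-1}$; the extra element $\ell_q = h_{q-1}+q+1$ is fixed by $w_{q-2}\cdots w_1$, contributing the final preimage. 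Crucially, the ``bad'' point $\ell_{q-1}+1 = h_{q-1}+q$ is excluded because $w_{q-1}$ sends it to $h_{q-1}+1 = h_q\notin[h_q+1,h_q+q]$.
\end{itemize}

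To finish, I would also note that no $t\in[1,\ell_{q-1}]\setminus\{\ell_1,\ldots,\ell_{q-1}\}$ can satisfy the condition: by induction such a $v = w_{q-2}\cdots w_1(t)$ lies in $[1,h_{q-1}]$, which is fixed pointwise by $w_{q-1}$, and is disjoint from $[h_q+1,h_q+q]$. The main obstacle, and the point where one must pay attention, is precisely Case II above: when a jump $\ell_q - \ell_{q-1} = 2$ occurs, one has to see explicitly why the intermediate integer $\ell_{q-1}+1$ is ruled out while $\ell_q$ survives, and this relies on the fact that the cycle $w_{q-1}$ sends the top of its support to the bottom, which falls just outside the target window $[h_q+1,h_q+q]$.
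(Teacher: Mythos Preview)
Your proof is correct and follows essentially the same approach as the paper's: induction on $q$ using the explicit cycle description of $w_{q-1}$ (the paper records this as formula (\ref{Eq:w_k})), reducing to the induction hypothesis by pulling back through $w_{q-1}$ and splitting according to whether $h_q=h_{q-1}$ or $h_q=h_{q-1}+1$. Your organization is slightly cleaner---you case-split first on whether $q-1\in\bi$ and prove the two-sided ``exactly'' statement---whereas the paper case-splits first on whether $w_{q-1}\cdots w_1(t)=\ell_q$; but the substance is the same.
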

\begin{proof}
The proof is executed by induction on $q$. The case $q=1$ is clear since $h_1=0$ and $t=1=\ell_1$.
\par
There are two cases to examine:
\begin{enumerate}
\item Assume that $w_{q-1}\cdots w_1(t)=\ell_q$. By definition, either $h_{q-1}+1=h_q$ or $h_{q-1}=h_q$. 
\par
In the first case, $\ell_{q-1}+1<\ell_q$, hence by (\ref{Eq:w_k}) $w_{q-1}\cdots w_1(\ell_q)=\ell_q$ and $t=\ell_q$. 
\par
In the second situation, $\ell_{q-1}+1=\ell_q$, then $w_{q-1}^{-1}(\ell_q)=w_{q-1}^{-1}(\ell_{q-1}+1)=\ell_{q-1}$. By (\ref{Eq:w_k}), $w_{q-2}\cdots w_1(t)=\ell_{q-1}$. Applying the induction hypothesis gives some $p\leq q-1$ such that $t=\ell_p$.
\item Assume that $w_{q-1}\cdots w_1(t)\neq\ell_q$, i.e., $h_q+1\leq w_{q-1}w_{q-2}\cdots w_1(t)\leq h_q+q-1.$ We separate the proof into two cases as above.
\par
If $h_{q-1}+1=h_q$, the inequality reads $h_{q-1}+2\leq w_{q-1}w_{q-2}\cdots w_1(t)\leq h_{q-1}+q.$ By (\ref{Eq:w_k}), $h_{q-1}+1\leq w_{q-2}\cdots w_1(t)\leq h_{q-1}+q-1$. The induction hypothesis provides some $p\leq q-1$ such that $t=\ell_p$.
\par
If $h_{q-1}=h_q$, the inequality turns out to be $h_{q-1}+1\leq w_{q-1}w_{q-2}\cdots w_1(t)\leq h_{q-1}+q-1.$ Then either $h_{q-1}+1\leq w_{q-2}\cdots w_1(t)\leq h_{q-1}+q-2$ or $w_{q-2}\cdots w_1(t)=h_{q-1}+q$. We can apply the induction hypothesis in the first case to obtain some $p\leq q-1$ such that $t=\ell_p$. For the second case, since $h_{q-1}+q=\ell_{q-1}+1$ and $w_{q-2}\cdots w_1(\ell_{q-1}+1)=\ell_{q-1}+1$, $t=\ell_{q-1}+1=\ell_q$.
\end{enumerate}
\end{proof}

\begin{proposition}\label{Prop:conjugate}
We have $\lie n_{w_\bi}^+=w_\bi\, \lie n_{w_\bi}^- w_{\bi}^{-1}$, hence $\lie n_{w_\bi}^+$ is a Lie subalgebra of $\lie b_{\bi}^+$ isomorphic to $\lie n_{w_\bi}^-$ via $w_{\bi}$-conjugation.
\end{proposition}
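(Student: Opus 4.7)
My plan is to prove the identity $\lie n_{w_\bi}^+ = w_\bi\, \lie n_{w_\bi}^- w_\bi^{-1}$ by establishing the inclusion $w_\bi\, \lie n_{w_\bi}^- w_\bi^{-1}\subseteq \lie n_{w_\bi}^+$ root vector by root vector, and then matching dimensions. The Lie subalgebra assertion for $\lie n_{w_\bi}^+$ then follows automatically, since conjugation by $w_\bi$ is a Lie algebra automorphism of $\mathfrak{sl}_\bi$ and transports the Lie algebra structure from $\lie n_{w_\bi}^-$.

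For the inclusion, note that conjugation by $w_\bi$ sends the root vector $f_{\ell_p,\ell_q}$ of weight $-\alpha_{\ell_p,\ell_q}$ to a root vector of weight $-w_\bi(\alpha_{\ell_p,\ell_q})$. For this image to belong to $\lie n_{w_\bi}^+$, one needs $-w_\bi(\alpha_{\ell_p,\ell_q})$ to be a positive root whose $w_\bi^{-1}$--image is negative. The second requirement is automatic, since $w_\bi^{-1}(-w_\bi(\alpha_{\ell_p,\ell_q})) = -\alpha_{\ell_p,\ell_q} < 0$. So the whole task reduces to checking that $w_\bi(\alpha_{\ell_p,\ell_q}) < 0$ for every $1\le p\le q\le n$, i.e., writing $\alpha_{\ell_p,\ell_q} = e_{\ell_p}-e_{\ell_q+1}$, verifying $w_\bi(\ell_p) > w_\bi(\ell_q+1)$.

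Next I would read off both quantities from Proposition~\ref{prop:action}. That proposition gives $w_\bi(\ell_p) = h_p + n - p + 2$ when $p-1\notin\bi$ (extending by the convention $\ell_0 = h_0 = 0$, $0\notin\bi$) and $w_\bi(\ell_p) = h_p + n + 1$ when $p-1\in\bi$. For $w_\bi(\ell_q+1)$ with $q<n$, one rewrites $\ell_q+1 = \ell_{q+1}$ when $q\notin\bi$ and $\ell_q+1 = \ell_{q+1}-1$ when $q\in\bi$, then applies Proposition~\ref{prop:action} again, obtaining $w_\bi(\ell_q+1) = h_q + n - q + 1$ or $w_\bi(\ell_q+1) = h_q + 1$ respectively. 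For the boundary case $q=n$, the index $\ell_n + 1 = h_n + n + 1$ is the maximum of the permuted set, and each $w_k$ with $k<n$ fixes it because it lies strictly above $h_k + k + 1$; hence $w_\bi(\ell_n+1) = w_n(\ell_n+1) = h_n + 1$. A short case analysis on the four possibilities for whether $p-1\in\bi$ and $q\in\bi$, plus the boundary $q=n$, using only the monotonicity bound $h_q - h_p \le q - p$ (a direct consequence of the strict monotonicity of $\bi$), then confirms $w_\bi(\ell_p) > w_\bi(\ell_q+1)$ in each case.

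To upgrade the inclusion to equality I would compare dimensions. By Proposition~\ref{Lem:eta}, $\dim \lie n_{w_\bi}^- = n(n+1)/2$, and conjugation by $w_\bi$ is injective, so the left-hand side has dimension $n(n+1)/2$. On the other hand, $\dim \lie n_{w_\bi}^+ = \ell(w_\bi)$, and the factorization $w_\bi = w_n \cdots w_1$ with $\ell(w_k) = k$ immediately yields $\ell(w_\bi)\le n(n+1)/2$. Combined with the inclusion just proven, this forces both sides to have dimension $n(n+1)/2$ and the inclusion to be an equality. The main obstacle is the case analysis in the third paragraph: although the arithmetic within each case is elementary, one must keep careful track of the shifts encoded by $\bi$, particularly when $p = 1$ and $q = n$ are at the boundary, to ensure that the bound $h_q - h_p \le q - p$ always leaves enough room for strict positivity.
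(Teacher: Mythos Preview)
Your argument is correct, and it takes a genuinely different route from the paper's. Both proofs reduce to showing that the set of negative roots $\{-\alpha_{\ell_p,\ell_q}:1\le p\le q\le n\}$ coincides with $w_\bi^{-1}(\Delta_\bi^+)\cap\Delta_\bi^-$, and both finish by a counting argument; the difference lies in which inclusion is proved directly. The paper establishes the inclusion $w_\bi^{-1}(\Delta_\bi^+)\cap\Delta_\bi^-\subseteq\{-\alpha_{\ell_p,\ell_q}\}$: starting from a positive root $\alpha_{t,u}$ with $w_\bi(\alpha_{t,u})<0$, it shows that both $t$ and $u$ must lie in $\{\ell_1,\ldots,\ell_n\}$, invoking a separate technical Lemma (Lemma~\ref{hq}) that analyses inductively how the partial products $w_{q-1}\cdots w_1$ act. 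You prove the opposite inclusion $\{-\alpha_{\ell_p,\ell_q}\}\subseteq w_\bi^{-1}(\Delta_\bi^+)\cap\Delta_\bi^-$ by computing $w_\bi(\ell_p)$ and $w_\bi(\ell_q+1)$ directly from Proposition~\ref{prop:action} and comparing them via the elementary bound $h_q-h_p\le q-p$. Your approach avoids Lemma~\ref{hq} entirely and has the pleasant side effect that reducedness of the expression $w_\bi=w_n\cdots w_1$ falls out as a consequence (from $n(n+1)/2\le\ell(w_\bi)\le\sum_k k$) rather than being assumed. The paper's approach, on the other hand, identifies explicitly which positive roots occur in $\lie n_{w_\bi}^+$, which is not immediately visible from your argument.
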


\begin{proof}
By the definition of $\lie n_{w_\bi}^+$, it suffices to show that
$$w_\bi^{-1}(\Delta_\bi^+)\cap\Delta_\bi^-=\{-\alpha_{\ell_p,\ell_q}\,|\,1\leq p\leq q\leq n\}.$$
It is equivalent to show that for $\alpha_{r,s}$, $\alpha_{t,u}\in\Delta_\bi^+$, $w_\bi^{-1}(\alpha_{r,s})=-\alpha_{t,u}$ if and only if there exist $1\leq p\leq q\leq n$ such that $t=\ell_p$ and $u=\ell_q$.
\par
We suppose $w_\bi(\alpha_{t,u})\in\Delta_\bi^-$ and prove that $t,u\in\{\ell_1,\ell_2,\cdots,\ell_n\}$.
\par
Recall the definition of $w_\bi$: $w_\bi=w_nw_{n-1}\cdots w_1$, where
$$w_k=s_{h_k+1}\cdots s_{h_k+k-1}s_{\ell_k}.$$
It is clear that for any $k=1,\cdots,n$,
\begin{equation}\label{Eq:w_k}
w_k(r)=\begin{cases} r+1 & \text{ if } r=h_k+1,\cdots,h_k+k; \\
h_k+1 & \text{ if } r=\ell_k+1; \\
r & \text{ else}.
\end{cases}
\end{equation}
Writing $\alpha_{i,j}=\ve_i-\ve_{j+1}$, it is a consequence of (\ref{Eq:w_k}) that $w_k(\alpha_{i,j})\in\Delta_\bi^-$ if and only if $h_k+1\leq i\leq h_k+k$ and $j=\ell_k$.
We first show that there exists $q$ such that $u=\ell_q$. Since $\alpha_{t,u}\in\Delta_\bi^+$, there exists a smallest integer $q$ such that
$$w_{q-1}\cdots w_1(\alpha_{t,u})\in\Delta_\bi^+\ \ \text{but}\ \ w_qw_{q-1}\cdots w_1(\alpha_{t,u})\in\Delta_\bi^-.$$
It implies that $w_{q-1}\cdots w_1(\alpha_{t,u})=\alpha_{t',\ell_q}$ for some $h_q+1\leq t'\leq h_q+q=\ell_q$.
As $w_{q-1}\cdots w_1(\ell_q+1)=\ell_q+1$, hence $u=\ell_q$.

Using Lemma \ref{hq} we conclude that 
$w_\bi^{-1}(\Delta_\bi^+)\cap\Delta_\bi^-\subset\{-\alpha_{\ell_p,\ell_q}\,|\,1\leq p\leq q\leq n\}$. 
To finish the proof of the proposition, it suffices to show that both sides have the same cardinality: 
since the expression $w_\bi=w_nw_{n-1}\cdots w_1$ of $w_\bi$ is reduced, 
$$\# w_\bi^{-1}(\Delta_\bi^+)\cap\Delta_\bi^-=\ell(w_\bi)=\frac{n(n+1)}{2}=\# \{-\alpha_{\ell_p,\ell_q}\,|\,1\leq p\leq q\leq n\}.$$
\end{proof}

We see immediately

\begin{corollary}
The element $w_{ \bi} \in W_{ \bi}$ is triangular (in the sense of \cite{Fou1}).
\end{corollary}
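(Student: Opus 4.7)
The plan is to verify the triangularity condition from \cite{Fou1} by leveraging the structural results already in place: Proposition \ref{Prop:conjugate}, which exhaustively lists the positive roots of $\mathfrak{sl}_\bi$ flipped to negatives by $w_\bi^{-1}$, and Proposition \ref{Lem:eta}, which identifies $\lie n_{w_\bi}^-$ with the graded Lie algebra $\lie n^{-,\bi}$.

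First I would recall the definition of a triangular Weyl group element from \cite{Fou1}: roughly, $w$ is triangular if there is a grading on the subalgebra $\lie n_w^-$ generated by the root vectors $f_\beta$ for $\beta \in w^{-1}(\Delta^+) \cap \Delta^-$ such that the graded relations are either trivial or carry one flipped root into another of strictly greater degree, and such that this grading is compatible with the PBW degeneration of the Demazure module $V_w(\lambda)$. Next I would transport the grading $\deg_\bi$ on $\lie n^{-,\bi}$ given by Proposition \ref{Prop:degree} across the isomorphism $\eta$ of Proposition \ref{Lem:eta} to obtain a grading on $\lie n_{w_\bi}^- \subset \lie n_\bi^-$. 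The defining relations \eqref{Eq:Abel} then show that whenever $[f_{p,q}, f_{s,r}]$ is nonzero it equals $f_{p,r}$, whose $\deg_\bi$-degree is the sum of the two input degrees; this is precisely the triangular shape of the commutator table required by \cite{Fou1}.

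Finally I would check that the reduced factorization $w_\bi = w_n w_{n-1} \cdots w_1$ enumerates the flipped roots $\{\alpha_{\ell_p,\ell_q}\}$ in an order compatible with this grading, which is essentially the computation already performed in Proposition \ref{prop:action} together with the reducedness fact $\ell(w_\bi) = n(n+1)/2$ used at the end of Proposition \ref{Prop:conjugate}. This exhibits a reduced decomposition and a grading verifying all the axioms of \cite{Fou1}.

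The main (and really only) obstacle is a notational one: reconciling the indexing of flipped roots as $\alpha_{\ell_p,\ell_q}$ used here with the abstract setup in \cite{Fou1}. Because all the relevant data -- the explicit set of flipped roots, the Lie bracket table \eqref{Eq:Abel}, and the degree function $\deg_\bi$ -- are already computed, no further algebraic or geometric input is required, and the verification is essentially a direct translation.
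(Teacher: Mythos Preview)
The paper offers no proof beyond the phrase ``We see immediately'' placed directly after Proposition \ref{Prop:conjugate}. Whatever the precise formulation of triangularity in \cite{Fou1} is, it is evidently read off from the conclusion of that proposition: the set of positive roots sent to negatives by $w_\bi^{-1}$ is exactly $\{\alpha_{\ell_p,\ell_q}\mid 1\le p\le q\le n\}$, and in particular $\lie n_{w_\bi}^+$ is a Lie subalgebra of $\lie b_\bi^+$.

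Your proposal also starts from Proposition \ref{Prop:conjugate}, so the core input is right, but you are layering on machinery that does not belong to the definition. The grading $\deg_\bi$, the commutator table \eqref{Eq:Abel}, the isomorphism $\eta$ of Proposition \ref{Lem:eta}, and the compatibility with a reduced factorization are all used \emph{later} in the paper --- in Proposition \ref{Prop:Triangular} and in Section \ref{map-subsection} --- to compare $V^\bi(\lambda)$ with the Demazure module $V_{w_\bi}(\lambda_\bi)$. They are consequences that flow \emph{from} triangularity (via the results of \cite{Fou1}), not conditions one must verify to establish it. Your hedge ``roughly'' on the definition is the tell: you have conflated the definition with the theorems one proves about triangular elements. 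Before writing this out, look up the actual definition in \cite{Fou1}; you will find the corollary really is a one-line observation once Proposition \ref{Prop:conjugate} is in hand.
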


\begin{proposition}[{\cite{Fou1}}]\label{Prop:Triangular} 
For any dominant weight $\lambda \in   \mathcal{P}^+$, we have $\dim V_{w_{\bi}}(\lambda_{\bi}) = \dim V^{ \bi} (\lambda)$.
\end{proposition}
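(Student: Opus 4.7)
My plan is to read this proposition as an instance of the general dimension theorem for triangular Weyl group elements in \cite{Fou1}. Since the preceding corollary records that $w_\bi$ is triangular, the task reduces to verifying that our concrete setup matches the hypotheses of that theorem: we need to exhibit $V_{w_\bi}(\lambda_\bi)$ and $V^\bi(\lambda)$ as cyclic modules over the same filtered algebra, with cyclic vectors and defining relations that correspond.

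First, I would transport the $\lie b_\bi^+$-module structure on $V_{w_\bi}(\lambda_\bi)$ to a $U(\lie n^{-,\bi})$-module structure. Proposition \ref{Lem:eta} gives a Lie algebra isomorphism $\eta\colon \lie n^{-,\bi} \to \lie n^-_{w_\bi}$, and Proposition \ref{Prop:conjugate} shows that $w_\bi$-conjugation is a further isomorphism $\lie n^-_{w_\bi} \to \lie n^+_{w_\bi} \subset \lie b_\bi^+$. Composing these yields precisely the action $f_{p,q} \cdot v := w_\bi^{-1} f_{\ell_p, \ell_q} w_\bi \cdot v$ defined in the text. The resulting $U(\lie n^{-,\bi})$-module is cyclic, generated by the extremal weight vector $v_{w_\bi \lambda_\bi}$, because $U(\lie n^+_{w_\bi}) \cdot v_{w_\bi \lambda_\bi} = V_{w_\bi}(\lambda_\bi)$ by standard Demazure theory (all positive roots inverted by $w_\bi^{-1}$ are already accounted for in $\lie n^+_{w_\bi}$).

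Having put both modules on an equal footing as cyclic $U(\lie n^{-,\bi})$-modules, the comparison of dimensions reduces to matching their annihilators. For $V^\bi(\lambda)$, the annihilator of $v_\lambda^\bi$ is generated by the $\deg_\bi$-leading terms of the Serre-type relations $f_{p,q}^{\langle \lambda, \alpha_{p,q}^\vee\rangle + 1}$ that kill $v_\lambda$. On the Demazure side, the analogous extremal-weight power relations for $v_{w_\bi \lambda_\bi}$, transported under $\eta$ composed with $w_\bi$-conjugation, become expressions in the same $f_{p,q}$. The combinatorial content that makes this matching work is exactly the triangularity of $w_\bi$, encoded in the sequence $(\ell_1, \dots, \ell_n)$ and already available via Proposition \ref{Prop:conjugate}. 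Counting PBW monomials on either side via the common algebra $U(\lie n^{-,\bi})$ then produces the dimension equality.

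The main obstacle will be this last ideal-matching step. Verifying that extremal weight annihilation in the Demazure module corresponds, under $w_\bi$-conjugation, to exactly the right PBW powers on $V^\bi(\lambda)$ requires a careful weight and commutation-relation calculation: one must track how $w_\bi^{-1} f_{\ell_p,\ell_q} w_\bi$, together with the powers needed to annihilate $v_{w_\bi \lambda_\bi}$, intertwines with the filtration $F_\bi$. This is precisely what \cite{Fou1} handles uniformly for triangular elements, so once triangularity is invoked the dimension equality follows without further work.
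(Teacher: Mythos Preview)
Your approach is valid but takes a different route from the paper. You argue algebraically: transport the Demazure module to a cyclic $U(\lie n^{-,\bi})$-module via Propositions~\ref{Lem:eta} and~\ref{Prop:conjugate}, then match the annihilator ideals of the two cyclic generators, deferring the hard step to the triangular-element theory of \cite{Fou1}. The paper instead bypasses annihilators entirely and argues combinatorially: by \cite[Corollary~2]{Fou1}, the Demazure module $V_{w_\bi}(\lambda_\bi)$ has a basis indexed by lattice points of the marked chain polytope $P_{w_\bi}(\lambda_\bi)$; the underlying poset is isomorphic to the one for $\lie n^-$ from \cite{FeFoLit}, so the two polytopes (and hence their lattice-point counts) coincide. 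Your route has the merit of making the module-theoretic link explicit, and indeed much of the setup you describe reappears in Section~\ref{map-subsection} when the paper constructs the actual isomorphism. But for the bare dimension statement the paper's polytope argument is considerably shorter, and it avoids the delicate ideal-matching step you flag as the main obstacle.
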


\begin{proof}
A basis of $V_{w_{\bi}}(\lambda_{\bi})$ is parametrized by the lattice points of the marked chain polytope 
$P_{w_\bi}(\lambda_{\bi})$ (by {\cite[Corollary 2]{Fou1}}). But the underlying poset is isomorphic 
to the poset induced from $\lie n^-$ (see \cite{FeFoLit}), this implies that the polytopes are isomorphic. 
Hence the numbers of lattice points are equal and so the dimensions coincide.
\end{proof}

\subsection{Proof of Theorem~\ref{Thm:HighestOrbit}: Explicit map}\label{map-subsection}
As a Demazure module, $V_{w_{\bi}}(\lambda_{\bi})$ is a $\lie b_{\bi}^+$-module,
hence an $\lie n_{w_\bi}^+$-module by restriction. Let $v_{w_\bi}$ be a lowest weight vector in $V_{w_{\bi}}(\lambda_{\bi})$ such that 
$$V_{w_{\bi}}(\lambda_{\bi})=U(\lie b_\bi^+)v_{w_\bi}.$$

\begin{lemma}\label{bn}
We have $U(\lie b_\bi^+)v_{w_\bi}=U(\lie n_{w_\bi}^+)v_{w_\bi}$.
\end{lemma}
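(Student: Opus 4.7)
The plan is to reduce the equality to the classical fact that, in a Demazure module, the extremal weight vector $v_{w_\bi}$ is annihilated by all positive root vectors $e_\alpha$ with $w_\bi^{-1}(\alpha)>0$.

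First I would split $\lie b_\bi^+=\lie h_\bi\oplus\lie n_\bi^+$. Since $v_{w_\bi}$ is a weight vector, $U(\lie h_\bi)$ acts on it by scalars, so the statement reduces to proving $U(\lie n_\bi^+)v_{w_\bi}=U(\lie n_{w_\bi}^+)v_{w_\bi}$. Next I would introduce the complementary subspace
\[
\mathfrak{c}:=\sspan_{\mathbb{C}}\{e_\alpha\mid \alpha\in\Delta_\bi^+,\ w_\bi^{-1}(\alpha)>0\},
\]
so that $\lie n_\bi^+=\lie n_{w_\bi}^+\oplus\mathfrak{c}$. A short root check, using linearity of $w_\bi^{-1}$ on root sums, shows that both summands are Lie subalgebras. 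By PBW, every element of $U(\lie n_\bi^+)$ can be written as a sum of products $u\cdot c$ with $u\in U(\lie n_{w_\bi}^+)$ and $c\in U(\mathfrak{c})$.

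The remaining and key step is to show that $\mathfrak{c}$ annihilates $v_{w_\bi}$. For this, I would lift $w_\bi$ to a representative $\dot{w}_\bi\in \SL_{n+1+d(\bi)}$ normalizing the maximal torus of $\lie h_\bi$. Then $v_{w_\bi}$ is a nonzero scalar multiple of $\dot{w}_\bi\cdot v_{\lambda_\bi}$, where $v_{\lambda_\bi}\in V(\lambda_\bi)$ is a highest weight vector. Using the identity $\dot{w}_\bi^{-1}e_\alpha\dot{w}_\bi=c_\alpha e_{w_\bi^{-1}(\alpha)}$ for a nonzero constant $c_\alpha$, one obtains
\[
e_\alpha v_{w_\bi}\ \propto\ \dot{w}_\bi\cdot e_{w_\bi^{-1}(\alpha)}v_{\lambda_\bi},
\]
which vanishes when $e_\alpha\in\mathfrak{c}$, since then $w_\bi^{-1}(\alpha)$ is a positive root and therefore $e_{w_\bi^{-1}(\alpha)}v_{\lambda_\bi}=0$. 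Combined with the PBW splitting above, this forces every contribution of $U(\mathfrak{c})$ applied to $v_{w_\bi}$ to collapse to a scalar multiple of $v_{w_\bi}$, and the desired equality follows. The only substantive step is the annihilation computation, which is a standard feature of extremal weight vectors in Demazure modules.
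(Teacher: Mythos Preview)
Your proposal is correct and follows essentially the same approach as the paper: the core step in both is the conjugation identity $e_\alpha v_{w_\bi}\propto \dot{w}_\bi\cdot e_{w_\bi^{-1}(\alpha)}v_{\lambda_\bi}$, which forces $w_\bi^{-1}(\alpha)<0$ whenever $e_\alpha v_{w_\bi}\neq 0$. The paper's proof records only this key observation and leaves the passage to the enveloping algebra implicit, whereas you make it explicit via the PBW factorization $U(\lie n_\bi^+)=U(\lie n_{w_\bi}^+)\cdot U(\mathfrak{c})$; both subalgebra claims and the factorization are valid in type $A$, so your more detailed write-up is a faithful expansion of the paper's argument rather than a different route.
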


\begin{proof}
We take $e_\alpha\in\lie b_{\bi}^+$ for some $\alpha\in\Delta_\bi^+$ such that $e_\alpha\cdot v_{w_\bi(\lambda_\bi)}\neq 0$. This condition implies that 
$$w_\bi\, e_{w_\bi^{-1}(\alpha)}\cdot v_{\lambda_\bi}\,w_\bi^{-1}\neq 0,$$
i.e. $e_{w_\bi^{-1}(\alpha)}v_{\lambda_\bi}\neq 0$. Since $v_{\lambda_\bi}$ is a highest weight vector, $w_\bi^{-1}(\alpha)<0$.
\end{proof}

By Proposition \ref{Prop:conjugate},
$$U(\lie n_{w_\bi}^+)v_{w_\bi}=w_\bi\,U(\lie n_{w_\bi}^-)w_\bi^{-1}w_\bi\, v_{\lambda_{\bi}}w_\bi^{-1}=w_\bi \,U(\lie n_{w_\bi}^-)v_{\lambda_{\bi}}w_\bi^{-1}$$
where $v_{\lambda_{\bi}}$ is a highest weight vector in $V(\lambda_{\bi})$. Hence as $\lie n_{w_\bi}^-$-modules, we have $V_{w_{\bi}}(\lambda_{\bi})=w_\bi \,U(\lie n_{w_\bi}^-)v_{\lambda_{\bi}}w_\bi^{-1}$.

\bigskip

Now we prove Theorem \ref{Thm:HighestOrbit}. The proof consists of two parts: the first and crucial step is to consider the case of fundamental $\lambda$ 
and the second step is to deduce the general case. We prepare the following lemma.

\begin{lemma}\label{Prop:twist}
We have
$$w_\bi^{-1}V_{w_{\bi}}(\varpi_{\ell_r})w_\bi  = U(\lie n^-_{w_{\bi}} \cap \lie n^-_{\ell_r}).v_{\varpi_{\ell_r}}$$
where $\lie n^-_{\ell_r}$ is the nilpotent radical corresponding to the weight $\varpi_{\ell_r}$ 
(i.e., generated by all $f_{p,q}$ with $p \leq \ell_r \leq q$).
\par
Moreover, if $\{p_1, \cdots, p_s\}$ and $\{q_1, \cdots, q_s\}$ are two sets of pairwise distinct 
indices with $p_i \leq r \leq q_i$ for all $i$, then
$$ \prod_{i} f_{p_i, q_i}\cdot v_1 \wedge \cdots \wedge v_r \neq 0 \in V^{\bi}(\varpi_r).$$
\end{lemma}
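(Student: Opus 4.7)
The plan is to prove the two assertions separately, reusing the parabolic structure of $V(\varpi_{\ell_r})$ for the first part and a direct combinatorial estimate in $\wedge^r V$ for the second.

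For the equality of modules, Lemma~\ref{bn} gives $V_{w_\bi}(\varpi_{\ell_r}) = U(\lie n_{w_\bi}^+)\cdot v_{w_\bi}$, and conjugating by $w_\bi$ (using Proposition~\ref{Prop:conjugate}) rewrites this as $U(\lie n_{w_\bi}^-)\cdot v_{\varpi_{\ell_r}}$. I will then split
\[
\lie n_{w_\bi}^- = (\lie n_{w_\bi}^- \cap \lie n^-_{\ell_r}) \oplus (\lie n_{w_\bi}^- \cap \lie l_{\ell_r}),
\]
where $\lie l_{\ell_r}$ is the Levi of the parabolic stabilising $\varpi_{\ell_r}$; since $\lie n^-_{\ell_r}$ is the nilradical of the opposite parabolic, the first summand is an ideal in $\lie n_{w_\bi}^-$. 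PBW then factors $U(\lie n_{w_\bi}^-) = U(\lie n_{w_\bi}^- \cap \lie n^-_{\ell_r})\cdot U(\lie n_{w_\bi}^- \cap \lie l_{\ell_r})$, and the Levi factor, consisting of lowering operators for roots orthogonal to $\varpi_{\ell_r}$, annihilates the highest weight vector, yielding the asserted identity.

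For the non-vanishing of $\prod_i f_{p_i, q_i} \cdot v_1 \wedge \cdots \wedge v_r$ in $V^\bi(\varpi_r)$, I first compute in $V(\varpi_r) = \wedge^r V$: since $p_i \le r$ and $q_i + 1 > r$, no index $q_j + 1$ equals any $p_i$, so the $f_{p_i, q_i}$ commute pairwise on $v_1 \wedge \cdots \wedge v_r$ and iterated application replaces each $v_{p_i}$ by $v_{q_i + 1}$, producing $\pm v_J$ for $J = (\{1, \dots, r\} \setminus \{p_i\}) \cup \{q_i + 1\}$, which is a non-zero element of $\wedge^r V$. The crucial remaining step is to show that the minimal filtration degree of $v_J$ in $V(\varpi_r)$ equals the degree $d := \sum_i \deg_\bi f_{p_i, q_i}$ of our explicit monomial, for then $v_J \notin V_{d-1}(\varpi_r)$ and the image of our monomial in $V_d / V_{d-1} = V^\bi_d(\varpi_r)$ is non-zero.

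To establish this minimality, I will view any alternative monomial of weight $-\sum_i \alpha_{p_i, q_i}$ as a decomposition of the positive-root sum $\sum_k c_k \alpha_k$ into positive roots $\sum_j \alpha_{p_j', q_j'}$ (summing heights gives a constant, so only the ``hits'' vary). Expanding the formula $\deg_\bi f_{p,q} = (q-p+1) - \#\{k \in \bi \colon p \le k < q\}$ yields the bookkeeping identity
\[
\sum_j \deg_\bi f_{p_j', q_j'} = \sum_{k \notin \bi} c_k + \sum_{k \in \bi} e_k,
\]
where $e_k$ is the number of intervals $[p_j', q_j']$ ending at $k$. Writing $c_k = e_k + T_k$ with $T_k \le \min(c_k, c_{k+1})$ forces $e_k \ge \max(0, c_k - c_{k+1})$; combined with $c_k - c_{k+1} = \#\{i \colon q_i = k\} - \#\{i \colon p_i = k+1\}$ and the constraints $p_i \le r \le q_i$ (which make the $\#\{p_i = k+1\}$ contributions vanish whenever $c_k - c_{k+1}$ could be positive), one obtains $\sum_{k \in \bi} e_k \ge \#\{i \colon q_i \in \bi\}$, exactly the value realised by the direct matching. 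Hence every monomial of the correct weight has total degree at least $d$, so $v_J \notin V_{d-1}(\varpi_r)$. The main obstacle is this combinatorial estimate: one must rule out ``indirect'' routings of sources $p_i$ to targets $q_i + 1$ through intermediate positions that might, \emph{a priori}, provide a cheaper monomial, which is exactly what the endpoint-count above accomplishes.
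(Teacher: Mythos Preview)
Your argument is correct, but it follows a genuinely different route from the paper in both halves.

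For the first assertion, you exploit the parabolic structure of $\mathfrak{sl}_{\bi}$ directly: the decomposition $\lie n^-_{w_\bi}=(\lie n^-_{w_\bi}\cap\lie n^-_{\ell_r})\oplus(\lie n^-_{w_\bi}\cap\lie l_{\ell_r})$ together with PBW and the fact that the Levi part kills $v_{\varpi_{\ell_r}}$ gives the identity immediately. The paper instead derives this from the convexity inequality $\deg_\bi(f_{p,q})\le\deg_\bi(f_{p,s-1})+\deg_\bi(f_{s,q})$: any monomial can be rewritten, without raising its filtration degree, as one built from the nilradical root vectors. Your route is cleaner and more conceptual here; the paper's advantage is that the same degree machinery serves both parts of the lemma.

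For the second assertion, you carry out a direct endpoint-counting estimate: from the identity $\sum_j\deg_\bi f_{p'_j,q'_j}=\sum_{k\notin\bi}c_k+\sum_{k\in\bi}e_k$ and the bound $e_k\ge\max(0,c_k-c_{k+1})$, combined with the crucial observation that $p_i\le r\le q_i$ forbids any coincidence $q_i=k$, $p_j=k+1$, you obtain $\sum_{k\in\bi}e_k\ge\#\{i:q_i\in\bi\}$ and hence the minimality of $d$. The paper's argument is shorter and more structural: it notes that all monomials in nilradical root vectors of the given weight are obtained from one another by permuting the matching $p_i\leftrightarrow q_i$, and the single identity $\deg_\bi(f_{p,q})+\deg_\bi(f_{t,s})=\deg_\bi(f_{p,s})+\deg_\bi(f_{t,q})$ for $p<t\le s<q$ shows that a transposition leaves the total degree unchanged; since $\mathfrak{S}_k$ is generated by transpositions, all such monomials share the same degree $d$, and convexity handles the remaining (non-nilradical) monomials. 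Your argument is self-contained and makes the constraint $p_i\le r\le q_i$ do explicit work, while the paper's transposition trick packages the same content into two easily verified degree identities.
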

\begin{proof} 
We have shown in the argument after Lemma \ref{bn} that 
$$U(\lie n^-_{w_\bi})v_{\varpi_{\ell_r}}=w_\bi^{-1}V_{w_\bi}(\varpi_{\ell_r})w_\bi.$$
It is enough to prove the following two claims. Firstly,
$$\deg_\bi(f_{p,q})+\deg_\bi(f_{t,s})=\deg_\bi(f_{p,s})+\deg_\bi(f_{t,q})$$
for all $p < t \leq s < q$. This follows easily from Proposition~\ref{Prop:degree}. 
\par
Secondly, we need for $p<s<q$, that
$$\deg_\bi(f_{p,q})\leq \deg_\bi(f_{p,s-1})+\deg_\bi(f_{s,q})$$
but this follows since the degree function is convex by definition.
\par
The second claim shows that the module is generated by the nilpotent radical,
while the first claim shows that for a fixed weight, all monomials in root vectors of the nilpotent 
radical have the same degree (they are parametrized by elements of the symmetric group 
$\mathfrak{S}_k$ (for some $k$)). But $\mathfrak{S}_k$ is generated by transpositions and hence 
it suffices to note that the degree is not changed under transpositions.
\end{proof}

We define a linear isomorphism
$$\zeta:V^{\bi}(\varpi_r)\ra w_\bi^{-1}V_{w_{\bi}}(\varpi_{\ell_r})w_\bi.$$
Consider the map $\Lambda^r \mathbb{C}^{n+1} \rightarrow \Lambda^r \mathbb{C}^{\ell_n + 1}$ induced by 
$$
v_i \mapsto \begin{cases} v_{\ell_i } & \text{ if } i \leq r \\ v_{{\ell_{i-1}}+1} & \text{ if } i > r \end{cases}
$$
Let $\{c_1,c_2,\cdots,c_{\ell_r-r}\}=\{1,2,\cdots,\ell_r\}\backslash\{\ell_1,\ell_2,\cdots,\ell_r\}$. 
We consider the map  
$$\Lambda^r \mathbb{C}^{\ell_n + 1}\ra \Lambda^{\ell_r} \mathbb{C}^{\ell_n + 1},\ \ 
u\mapsto u \wedge v_{c_1} \wedge \cdots \wedge v_{c_{\ell_r - r}}.$$
The composition $\Lambda^r \mathbb{C}^{n+1} \rightarrow \Lambda^r \mathbb{C}^{\ell_n + 1}\ra \Lambda^{\ell_r} \mathbb{C}^{\ell_n + 1}$
of these two linear maps will be denoted by $\zeta$.
\bigskip\\ \noindent
\textbf{Claim.} $\zeta$ induces a linear isomorphism 
$$\zeta:V^{\bi}(\varpi_r)\ra w_\bi^{-1}V_{w_{\bi}}(\varpi_{\ell_r})w_\bi.$$
\begin{proof}
By definition $\zeta$ is injective. By Proposition \ref{Prop:Triangular}, it suffices to show that 
its image is contained in $w_\bi^{-1}V_{w_{\bi}}(\varpi_{\ell_r})w_\bi$. By Lemma \ref{Prop:twist}, 
we can easily see that $ w_\bi^{-1}V_{w_{\bi}}(\varpi_{\ell_r})w_\bi$ is spanned by the vectors 
$v_{j_1} \wedge \cdots \wedge v_{j_{\ell_r}} $ such that
$$\{c_1, \cdots, c_{\ell_r - r } \} \subset \{j_1, \cdots, j_{\ell_r}\} 
\text{ and if } j_t > \ell_r \text{ then } j_{t} = \ell_s + 1 \text{ for some } s.$$
So we can write its generators as
\begin{equation}\label{Eq:e}
\underline{v} =  v_{\ell_{t_1}} \wedge \cdots \wedge v_{\ell_{t_s}} \wedge v_{\ell_{t_{s+1}} + 1} 
\wedge \cdots \wedge v_{\ell_{t_r} + 1} \wedge v_{c_1} \wedge \cdots \wedge v_{c_{\ell_r - r}}
\end{equation}
where $1\leq t_1<\cdots<t_r\leq n$ and $\ell_{t_s} \leq \ell_r < \ell_{t_{s+1}}$. 
By definition, $\zeta(v_{i_1}\wedge\cdots\wedge v_{i_r})$ is of this form.
\end{proof}

\begin{lemma}\label{Prop:fund}
For any $r=1,2,\cdots, n$, $V^{\bi}(\varpi_r)\cong V_{w_{\bi}}(\varpi_{\ell_r})$ as $\lie n^{-,\bi}$-modules.
\end{lemma}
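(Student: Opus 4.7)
The claim $V^{\bi}(\varpi_r)\cong V_{w_{\bi}}(\varpi_{\ell_r})$ as $\lie n^{-,\bi}$-modules is equivalent, after the $w_{\bi}$-conjugation and the Lie algebra isomorphism $\eta:\lie n^{-,\bi}\xrightarrow{\sim}\lie n^-_{w_{\bi}}$ of Proposition~\ref{Lem:eta}, to the assertion that the linear isomorphism
\[
\zeta: V^{\bi}(\varpi_r)\;\longrightarrow\; w_{\bi}^{-1}V_{w_{\bi}}(\varpi_{\ell_r})w_{\bi}\; =\; U(\lie n^-_{w_{\bi}}).v_{\varpi_{\ell_r}}
\]
constructed just above intertwines the two actions, i.e.\ $\zeta(f_{p,q}.u)=f_{\ell_p,\ell_q}.\zeta(u)$ for all $1\le p\le q\le n$ and $u\in V^{\bi}(\varpi_r)$. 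Injectivity of $\zeta$ is already proven, and the two spaces have the same dimension by Proposition~\ref{Prop:Triangular}, so it remains purely to check this intertwining identity.

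Both modules are cyclic: $V^{\bi}(\varpi_r)=U(\lie n^{-,\bi}).(v_1\wedge\cdots\wedge v_r)$ by construction, while $w_{\bi}^{-1}V_{w_{\bi}}(\varpi_{\ell_r})w_{\bi}=U(\lie n^-_{w_{\bi}}).(v_1\wedge\cdots\wedge v_{\ell_r})$ by Lemma~\ref{bn} together with Proposition~\ref{Prop:conjugate}. The definition of $\zeta$ together with the disjoint union $\{\ell_1,\ldots,\ell_r\}\sqcup\{c_1,\ldots,c_{\ell_r-r}\}=\{1,\ldots,\ell_r\}$ gives, up to an overall sign,
\[
\zeta(v_1\wedge\cdots\wedge v_r)\;=\;\pm\,v_1\wedge\cdots\wedge v_{\ell_r},
\]
so $\zeta$ matches the chosen cyclic vectors. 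It therefore suffices to compare the two actions on monomials applied to the highest weight vector.

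By Lemma~\ref{Prop:twist}, the weight spaces of $V^{\bi}(\varpi_r)$ are spanned by vectors of the form $\bigl(\prod_{i=1}^s f_{p_i,q_i}\bigr).(v_1\wedge\cdots\wedge v_r)$ with $p_i\le r\le q_i$ and with $\{p_i\}$, $\{q_i\}$ each consisting of pairwise distinct indices. Because $\lie n^{-,\bi}$ is partially abelian (relations~(\ref{Eq:Abel})), applying such a monomial to the wedge $v_1\wedge\cdots\wedge v_r$ produces, up to sign, the wedge obtained by replacing each $v_{p_i}$ by $v_{q_i+1}$. Transported by $\eta$, the corresponding monomial on the Demazure side is $\prod_{i=1}^s f_{\ell_{p_i},\ell_{q_i}}$; by the standard action on the exterior power, this sends $v_1\wedge\cdots\wedge v_{\ell_r}$ to the wedge obtained by replacing each $v_{\ell_{p_i}}$ by $v_{\ell_{q_i}+1}$, while the remaining entries $v_{\ell_j}$ ($j\le r$, $j\notin\{p_i\}$) and the complementary vectors $v_{c_k}$ are untouched. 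Applying $\zeta$ directly to the modified wedge on the left and using that $v_p\mapsto v_{\ell_p}$ for $p\le r$ and $v_{q+1}\mapsto v_{\ell_q+1}$ for $q\ge r$, one obtains exactly the same vector on the right (up to the same sign, once the orderings are aligned). Hence $\zeta$ intertwines the two actions on a generating set of weight vectors, and by cyclicity on all of $V^{\bi}(\varpi_r)$.

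The main subtlety is the bookkeeping in the third step: one needs to know that the monomials with a repeated $p_i$ or a repeated $q_i$ already vanish in $V^{\bi}(\varpi_r)$ (which is ensured by Lemma~\ref{Prop:twist}, giving the abelianized structure in the graded), and correspondingly that they vanish on the Demazure side because they produce wedges with a repeated basis vector. Once this is in place, no further representation-theoretic input is required, and the identification $V^{\bi}(\varpi_r)\cong V_{w_{\bi}}(\varpi_{\ell_r})$ as $\lie n^{-,\bi}$-modules follows.
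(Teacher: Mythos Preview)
Your overall strategy---reduce to showing $\eta(f_{p,q})\cdot\zeta(u)=\zeta(f_{p,q}\cdot u)$ and check this on a spanning set---matches the paper's, and the computation you carry out for monomials $\prod f_{p_i,q_i}$ with $p_i\le r\le q_i$ is correct and corresponds to the paper's case $p\le r\le q$. But your closing appeal to cyclicity hides a genuine gap: you have only verified the intertwining for root vectors $f_{p,q}$ lying in the (abelian) nilpotent radical attached to $\varpi_r$, not for all of $\lie n^{-,\bi}$. Root vectors $f_{p,q}$ with $p\le q<r$ or $r<p\le q$ annihilate the highest weight vector yet act nontrivially on other weight vectors, and the identity $\zeta(f_{p,q}\cdot u)=f_{\ell_p,\ell_q}\cdot\zeta(u)$ must still be checked for them. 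Cyclicity over the radical subalgebra does give you a spanning set of the module, but it cannot tell you how the \emph{remaining} generators of $\lie n^{-,\bi}$ act; knowing $\zeta(M\cdot v_0)=\eta(M)\cdot\zeta(v_0)$ for $M$ in a proper subalgebra is strictly weaker than knowing it for all $M\in U(\lie n^{-,\bi})$.

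The paper handles these two extra cases by direct inspection of the wedge action, and the crucial bookkeeping is different from the radical case. For $p\le q<r$, one has to see that $f_{p,q}\cdot(v_{i_1}\wedge\cdots\wedge v_{i_r})$ vanishes in $V^{\bi}(\varpi_r)$ exactly when $v_p$ is absent, or $v_{q+1}$ is absent, \emph{or} $q\in\bi$ (the last condition coming from the degree jump in the associated graded). On the Demazure side, $f_{\ell_p,\ell_q}$ sends $v_{\ell_p}\mapsto v_{\ell_q+1}$, and this index lands among the complementary indices $c_k$ (so the wedge with the $v_{c_k}$'s vanishes) precisely when $\ell_q+1\ne\ell_{q+1}$, i.e.\ when $q\in\bi$. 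Matching the nonvanishing cases then uses $\ell_q+1=\ell_{q+1}$ whenever $q\notin\bi$. This equivalence is the missing ingredient in your argument; once it (and its analogue for $r<p\le q$) is supplied, the proof is complete along the lines you outlined.
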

\begin{proof}
Recall the isomorphism of Lie algebras from Proposition \ref{Lem:eta}:
$$\eta:\lie n^{-, \bi}\ra \lie n_{w_\bi}^-,\ \ f_{p,q} \mapsto f_{\ell_p, \ell_q}.$$ 
It suffices to show that for any $f_{p,q}\in \lie n^{-,\bi}$,
\begin{equation}\label{Eq:module}
\eta(f_{p,q})\cdot \zeta(v_{i_1} \wedge \cdots \wedge v_{i_r}) = \zeta(f_{p,q}\cdot v_{i_1} \wedge \cdots \wedge v_{i_r}),
\end{equation}
which implies that $\zeta$ is an isomorphism of $\lie n^{-, \bi}$-modules.
\par
We fix some notations. By Lemma \ref{Prop:twist}, there exist two sets of pairwise distinct 
indices $\{p_1,p_2,\cdots,p_s\}$ and $\{q_1,q_2,\cdots,q_s\}$ with $p_i\leq r\leq q_i$ for $i=1,2,\cdots,s$ such that
$$v_{i_1} \wedge \cdots  \wedge v_{i_r} = \prod_{i} f_{p_i, q_i}\cdot v_1 \wedge \cdots \wedge v_r\in V^\bi(\varpi_r),$$
where $f_{p,q}\in\lie n^{-,\bi}$. We suppose that for some $t$, $i_t\leq r<i_{t+1}$
$$\{p_1,p_2,\cdots,p_{r-t}\}=\{1,2,\cdots,r\}\backslash\{i_1,\cdots,i_t\}$$
and
$$\{q_1,q_2,\cdots,q_{r-t}\}=\{i_{t+1}-1,\cdots,i_r-1\}.$$
We consider $\underline{v}$ as in (\ref{Eq:e}): let 
$$\{m_1,m_2,\cdots,m_{r-s}\}=\{1,2,\cdots,r\}\backslash\{t_1,t_2,\cdots,t_s\},$$
then
$$\underline{v} = \pm\prod_{i=1}^{r-s} f_{\ell_{m_i}, \ell_{t_{s+i}}}\cdot v_1 \wedge \cdots \wedge v_{\ell_r}.$$
Notice that $t$ satisfies $i_t\leq r<i_{t+1}$, then $\zeta(v_{i_1} \wedge \cdots  \wedge v_{i_r})$ reads
\begin{equation}\label{Eq:zetaimage}
\underline{v}'=v_{\ell_{i_1}}\wedge\cdots\wedge v_{\ell_{i_t}}\wedge v_{\ell_{i_{t+1}-1}+1}\wedge
\cdots\wedge v_{\ell_{i_{r}-1}+1}\wedge v_{c_1}\wedge\cdots\wedge v_{c_{\ell_r-r}},
\end{equation}
which is
$$\pm\prod_{i=1}^{r-t} f_{\ell_{p_i}, \ell_{q_i}}\cdot v_1 \wedge \cdots \wedge v_{\ell_r}.$$

The proof of (\ref{Eq:module}) is separated into three cases. We give the statement of the case 
$p\leq q< r$, the cases $p\leq r\leq q$ and $r<p\leq q$ are similar. The proofs can be done by direct computations, we omit them here.

\par
The following statements are equivalent:
\begin{itemize}
\item $f_{p,q}\cdot v_{i_1} \wedge \cdots \wedge v_{i_r}=0$ in $V^\bi(\varpi_r)$;
\item $p\in\{p_1,\cdots,p_{r-t}\}$ or $q+1\notin\{p_1,\cdots,p_{r-t}\}$ or $q\in\bi$;
\item $f_{\ell_p,\ell_q}\cdot \zeta(v_{i_1} \wedge \cdots \wedge v_{i_r})=0$.
\end{itemize}

Now to verify (\ref{Eq:module}), it suffices to consider the case 
$f_{p,q}\cdot v_{i_1} \wedge \cdots \wedge v_{i_r}\neq 0$. In this case we may suppose that $p=i_w$ 
for some $w\leq t$; since $q+1\leq r$, the right hand side of (\ref{Eq:module}) reads:
$$v_{\ell_{i_1}}\wedge\cdots \wedge v_{\ell_{i_{w-1}}}\wedge v_{\ell_{q+1}}\wedge v_{\ell_{i_{w+1}}}\wedge\cdots\wedge 
v_{\ell_{i_{t+1}-1}+1}\wedge\cdots\wedge v_{\ell_{i_{r}-1}+1}\wedge v_{c_1}\wedge\cdots\wedge v_{c_{\ell_r-r}},$$
while the left hand side reads
$$v_{\ell_{i_1}}\wedge\cdots \wedge v_{\ell_{i_{w-1}}}\wedge v_{\ell_{q}+1}\wedge v_{\ell_{i_{w+1}}}
\wedge\cdots\wedge v_{\ell_{i_{t+1}-1}+1}\wedge\cdots\wedge v_{\ell_{i_{r}-1}+1}\wedge v_{c_1}\wedge\cdots\wedge v_{c_{\ell_r-r}}.$$
We know that the hypothesis $f_{p,q}\cdot v_{i_1} \wedge \cdots \wedge v_{i_r}\neq 0$ implies 
$q\notin \bi$ and hence $\ell_q+1=\ell_{q+1}$, which proves (\ref{Eq:module}).
\end{proof}

Now we turn to the general case $\lambda=\sum_{i=1}^n\lambda_i\varpi_i$.
\begin{proof}
\begin{enumerate}
\item We consider the following commutative diagram:
\begin{tiny}
\[
\xymatrix{
w_\bi^{-1}V_{w_{\bi}}(\lambda_{\,\bi})w_\bi \ar[d]^-{\iota}  & V^{\,\bi}(\lambda) \ar[d]^-{\pi} \ar[l]_-{\beta}\\ 
(w_\bi^{-1}V_{w_{\bi}}(\Psi^{\bi}(\varpi_1))w_\bi)^{\ts\lambda_1}\ts\cdots\ts (w_\bi^{-1}V_{w_{\bi}}(\Psi^{\bi}(\varpi_n))w_\bi)^{\ts \lambda_n} \ar[r]^-{\vp} & V^{\,\bi}(\varpi_1)^{\ts \lambda_1}\ts\cdots \ts V^{\,\bi}(\varpi_n)^{\ts\lambda_n}.
}
\]
\end{tiny}
where 
\begin{itemize}
\item the map $\iota$ is an embedding of $w_\bi^{-1}\lie n_{w_{\bi}}^+w_\bi$-modules into the Cartan component;
\item the map $\pi$ is a $\lie n_{w_\bi}^-$-module projection onto the Cartan component;
\item the map $\vp$ is an isomorphism of $\lie n_{w_\bi}^-$-modules by Lemma~\ref{Prop:fund}.
\end{itemize}

Hence as composition, $\beta:=\iota^{-1}\circ \vp^{-1}\circ\pi$ is surjective. 
By Proposition \ref{Prop:Triangular}, $\beta$ is an isomorphism for dimension reasons. 
This completes the proof of Theorem~\ref{Thm:HighestOrbit} (1).
\item By definition, 
$$X_{w_\bi}:=\overline{B_\bi\cdot [v_{w_\bi(\lambda_\bi)}]}\subset \mathbb{P}(V_{w_\bi}(\lambda_\bi))\ \ \text{and}\ \ \ff^\bi(\lambda):=\overline{N^{-,\bi}\cdot [v_\lambda^{\bi}]}\subset \mathbb{P}(V^{\bi}(\lambda)).$$
We examine the space $B_\bi\cdot v_{w_\bi(\lambda_\bi)}$: for $\alpha\in\Delta_\bi^+$, let $U_\alpha$ denote the corresponding root subgroup in $B_\bi$, then
$$B_\bi\cdot v_{w_\bi(\lambda_\bi)}=\prod_{\alpha\in\Delta_\bi^+}U_\alpha\cdot v_{w_\bi(\lambda_\bi)}=\prod_{\alpha\in\Delta_\bi^+\, ,\, w_\bi^{-1}(\alpha)<0}U_\alpha\cdot v_{w_\bi(\lambda_\bi)}=N_\bi^+\cdot v_{w_\bi(\lambda_\bi)}.$$
Therefore $X_{w_\bi}=\overline{N_\bi^+\cdot [v_{w_\bi(\lambda_\bi)}]}$. By Theorem \ref{Thm:HighestOrbit} (1), conjugating by $w_\bi$ gives the desired isomorphism of projective varieties
$$X_{w_\bi}=\overline{N^+_{\bi}\cdot [v_{w_\bi(\lambda_\bi)}]}\cong \overline{w_\bi^{-1}N_\bi^+\cdot [v_{w_\bi(\lambda_\bi)}]w_\bi}=\overline{N_\bi^-\cdot[v_{\lambda_\bi}]}\cong \overline{N^{-,\bi}\cdot [v_\lambda^\bi]}=\ff^\bi(\lambda).$$
\end{enumerate}
\end{proof}

Fibers on the PBW locus $U_{\text{PBW}}$ share the geometric properties of Schubert varieties.

\begin{corollary}
For any $\bi\in\mathcal{D}$, $\ff_{n+1}^\bi$ is a normal variety having rational singularities; it is Cohen-Macaulay and Frobenius split.
\end{corollary}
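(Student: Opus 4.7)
The plan is to reduce the statement to classical geometric properties of Schubert varieties, which have been well studied. The main tool is Theorem~\ref{Thm:flagSchubert}, which produces an isomorphism of projective varieties $\ff_{n+1}^{\bi}\cong X_{w_{\bi}}$, where $X_{w_{\bi}}$ is the Schubert variety in the partial flag variety $\SL_{n+1+d(\bi)}/P_{\rho_{\bi}}$ attached to the Weyl group element $w_{\bi}$. Since all four properties claimed in the corollary (normality, rationality of singularities, Cohen-Macaulayness, Frobenius splitness) are invariant under isomorphism of varieties, it is enough to verify them for $X_{w_{\bi}}$.

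Accordingly, I would simply invoke the classical results on Schubert varieties in partial flag varieties for a semisimple algebraic group in arbitrary characteristic. Namely, by the work of Mehta--Ramanathan, every Schubert variety in $G/P$ admits a compatible Frobenius splitting; this in particular yields that Schubert varieties are reduced and normal in positive characteristic, and standard arguments (characteristic zero being a limit of positive characteristic reductions) transfer these consequences to our setting over $\mathbb{C}$. By the theorem of Ramanathan, Schubert varieties in $G/P$ are normal and Cohen--Macaulay and admit rational resolutions of singularities (for example via Bott--Samelson resolutions), so in particular have rational singularities. Each of these properties applies verbatim to $X_{w_{\bi}}$ since it is a Schubert subvariety of $\SL_{n+1+d(\bi)}/P_{\rho_{\bi}}$.

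Since no genuine obstacle arises beyond correctly citing the classical references, the proof is essentially a one-line deduction: the isomorphism $\ff_{n+1}^{\bi}\cong X_{w_{\bi}}$ from Theorem~\ref{Thm:flagSchubert} transports the known geometric properties of Schubert varieties to $\ff_{n+1}^{\bi}$. I would write this argument compactly, pointing out explicitly that the remark preceding the statement (Remark~\ref{Rem:Schubert}) already prepared the ground for such a deduction via the catenoid structure of $M^{\bi}$.
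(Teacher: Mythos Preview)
Your proposal is correct and follows exactly the same approach as the paper: the corollary is stated immediately after the sentence ``Fibers on the PBW locus $U_{\text{PBW}}$ share the geometric properties of Schubert varieties,'' and is left without an explicit proof, being understood as a direct consequence of the isomorphism $\ff_{n+1}^{\bi}\cong X_{w_{\bi}}$ from Theorem~\ref{Thm:flagSchubert} together with the classical results on Schubert varieties that you cite.
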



\subsection{Construction of a unipotent group scheme}
We close this section with the construction a flat unipotent group scheme acting on the fibers over
the PBW locus with dense orbits. This scheme can be regarded as a "universal" version of the construction given above;
we use the transversal slices introduced in subsection \ref{Ts}.
 
\begin{theorem} There exists a flat unipotent group scheme $\Gamma_{\rm PBW}\rightarrow T_{\rm PBW}$ 
acting on $\pi^{-1}(T_{\rm PBW})\rightarrow T_{\rm PBW}$ with a dense orbit.
\end{theorem}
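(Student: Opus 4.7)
\medskip

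\noindent\textbf{Proof plan.} The strategy is to build a Lie algebra bundle over $T_{\rm PBW}$ whose fibers interpolate between the degenerate Lie algebras $\lie n^{-,\bi}$, exponentiate it to a group scheme, and transport the fiberwise dense-orbit statement of Theorem~\ref{Thm:HighestOrbit} to the universal family.

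\medskip

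\noindent\emph{Step 1: The Lie algebra bundle.} Recall that $T_{\rm PBW}$ is the affine space with coordinates $(\lambda_{ii})_{i=1}^{n-1}$, and that along the slice one has $f_i = \mathrm{id}$ when $\lambda_{ii}=1$ and $f_i = \pr_{i+1}$ when $\lambda_{ii} = 0$; thus the regular broken rhyme scheme attached to a point of $T_{\rm PBW}$ has support $\{i : \lambda_{ii} = 0\}$. I propose to define $\mathfrak{l}$ as the free $\mathcal{O}(T_{\rm PBW})$-module with basis $\{f_{p,q}\}_{1\leq p\leq q\leq n}$ and bracket (for $p \leq s$)
\[
[f_{p,q}, f_{s,r}] \;=\; \begin{cases} \lambda_{qq}\, f_{p,r} & \text{if } s = q+1,\\ 0 & \text{otherwise},\end{cases}
\]
which specializes to the relations~(\ref{Eq:Abel}) for $\lie n^{-,\bi}$ at each $\{0,1\}$-valued point. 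The Jacobi identity must be checked as a polynomial identity in the $\lambda_{ii}$; the only potentially nontrivial triples are $(f_{p,q}, f_{q+1,r}, f_{r+1,s})$, and a direct calculation gives $\lambda_{qq}\lambda_{rr}f_{p,s} - \lambda_{rr}\lambda_{qq}f_{p,s} + 0 = 0$, so $\mathfrak{l}$ is indeed a bundle of nilpotent Lie algebras over $T_{\rm PBW}$.

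\medskip

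\noindent\emph{Step 2: Exponentiation to a group scheme.} Since $\mathfrak{l}$ is a nilpotent Lie algebra bundle of uniformly bounded nilpotency class (at most $n$), the Baker--Campbell--Hausdorff series is polynomial and defines a group scheme structure on the total space $\Gamma_{\rm PBW} := \mathbb{V}(\mathfrak{l}^\vee) = T_{\rm PBW} \times \mathbb{A}^{n(n+1)/2}$. Flatness of $\Gamma_{\rm PBW} \to T_{\rm PBW}$ is automatic since it is a trivial affine bundle, and the geometric fiber over a point with coordinates $(\lambda_{ii})$ is isomorphic to the unipotent group $N^{-,\bi}$ of Section~\ref{PBW}, where $\bi = \{i : \lambda_{ii} = 0\}$.

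\medskip

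\noindent\emph{Step 3: The action.} This is the most delicate step, and the likely main obstacle. I plan to define the action of $\mathfrak{l}$ on $\pi^{-1}(T_{\rm PBW})$ by infinitesimal generators and then exponentiate. Each $f_{p,q}$ should act as a vector field whose specialization at a $\{0,1\}$-valued point $f_* \in T_{\rm PBW}$ recovers the action of $\lie n^{-,\bi}$ on $\mathrm{Fl}^{\bi}_{n+1} \cong \mathcal{F}l^{\bi}(\lambda)$ established in the proof of Theorem~\ref{Thm:HighestOrbit} (via the conjugation by $w_\bi$). Concretely, one realizes the action on the quiver-Grassmannian side: viewing $\pi^{-1}(T_{\rm PBW})$ as a family of subrepresentations of the universal representation $\mathcal{M}$ over $T_{\rm PBW}$, the generators $f_{p,q}$ correspond to universal nilpotent elements of $\mathrm{End}(\mathcal{M})$ depending polynomially on the $\lambda_{ii}$, which deform the Schubert-cell-type action on the classical fiber to the degenerate actions on the PBW fibers. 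Compatibility with the bracket in $\mathfrak{l}$ follows from computing the commutators of these universal endomorphisms, yielding exactly the factor $\lambda_{qq}$ obstructing the classical relation at $\lambda_{qq}=0$. Exponentiating via BCH, one obtains the desired action of $\Gamma_{\rm PBW}$ on $\pi^{-1}(T_{\rm PBW})$ over $T_{\rm PBW}$.

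\medskip

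\noindent\emph{Step 4: Dense orbit.} By Theorem~\ref{Thm:HighestOrbit}(2) each fiber $\pi^{-1}(f_*) \cong \mathcal{F}l^{\bi}(\lambda)$ is the closure of the $N^{-,\bi}$-orbit of the highest weight line $[v_\lambda^{\bi}]$, so the $\Gamma_{\rm PBW}$-orbit in each fiber is dense. Choosing a global section of $\pi$ over $T_{\rm PBW}$ whose image at every point is this highest weight line (such a section exists because the highest weight line is canonically distinguished and varies algebraically in $\lambda$), the $\Gamma_{\rm PBW}$-saturation of this section is constructible, contains a dense open subset of each fiber, and hence is dense in $\pi^{-1}(T_{\rm PBW})$, completing the proof.
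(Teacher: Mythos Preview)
Your approach is genuinely different from the paper's, and Step~3 is where it falls short of a proof.

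The paper does not build an abstract Lie algebra bundle and then exponentiate. Instead it works inside the existing group scheme $G\times T_{\rm PBW}$ (with $G=\mathrm{GL}(V)^n$), passes to the automorphism subscheme ${\rm Aut}\subset G\times T_{\rm PBW}$ of the universal representation, and writes down an explicit closed subscheme $\Gamma\subset{\rm Aut}$ by specifying the matrix entries of $(g_1(\lambda),\ldots,g_n(\lambda))$ as polynomials in free parameters $x_{p,q}$ and the slice coordinates $\lambda_{ii}$. Because $\Gamma$ sits inside ${\rm Aut}$ by construction, it automatically acts on $\pi^{-1}(T_{\rm PBW})$; flatness is manifest because $\Gamma\to T_{\rm PBW}$ is visibly a trivial affine bundle; and the dense--orbit claim is reduced to a one--line stabilizer computation at the standard flag (no appeal to Theorem~\ref{Thm:HighestOrbit} is needed).

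In your write-up, Step~3 is exactly the step the paper's method makes unnecessary, and you have not carried it out: you assert that the generators $f_{p,q}$ ``correspond to universal nilpotent elements of $\mathrm{End}(\mathcal{M})$ depending polynomially on the $\lambda_{ii}$'' and that their commutators reproduce your bracket, but you give no formula for these endomorphisms and do not verify either claim. Without this, you have a group scheme but no action. If you try to fill this in, you will essentially rediscover the paper's matrices; once you have them, the detour through BCH and Theorem~\ref{Thm:HighestOrbit} is no longer needed. A secondary issue: your Step~4 invokes the fiberwise dense orbit only at $\{0,1\}$-valued points of $T_{\rm PBW}$, whereas the theorem concerns all of $T_{\rm PBW}$; you would still need an argument (e.g.\ rescaling the $f_{p,q}$, or semicontinuity of orbit dimension) to cover the remaining points.
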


\begin{proof} We start with the trivial group scheme $G\times R\rightarrow R$ and consider the closed subscheme ${\rm Aut}=s^{-1}(\Delta)$, 
where $s:G\times R\rightarrow R\times R$ is the shear map, and $\Delta\subset R\times R$ is the diagonal. Then the fiber of ${\rm Aut}$ over $M$ is just the automorphism group of $M$ considered as a representation, which acts on the fiber $\pi^{-1}(M)$. Note that ${\rm Aut}$ is not flat since the dimension of automorphism group varies with $M$.  We would like to construct closed subgroup schemes of ${\rm Aut}$. Restricting to $U_{\rm PBW}$, we can, without loss of generality, work over the transversal slice $T_{\rm PBW}$. We identify $T_{\rm PBW}$ with ${\mathbb C}^{n-1}$ (since an element of $T_{\rm PBW}$ is specified by parameters ${\bf\lambda}=(\lambda_{11},\ldots,\lambda_{n-1,n-1})$) and define $\Gamma\subset G\times T_{\rm PBW}$ as the closed subscheme of all tuples
$((g_1({\bf\lambda}),\ldots,g_n({\bf\lambda})),{\bf\lambda})$ such that
$$(g_i({\bf\lambda}))_{p,q}=\left\{\begin{array}{ccc}0&,&p<q,\\ 1&,&p=q,\\ \lambda_{q-1,q-1}x_{p,q}&,&i<q<p,\\
x_{p,q}&,&q\leq i<p,\\
\lambda_{p-1,p-1}x_{p,q}&,&q<p\leq i\end{array}\right.$$
for arbitrary $(x_{p,q})_{p>q}$. 

It is immediately verified that $\Gamma$ is a flat unipotent closed subgroup scheme of ${\rm Aut}$ over $T_{\rm PBW}$. To see that it acts with an open orbit on each fiber, one just verifies that the stabilizer of the standard flag $(\langle v_1\rangle,\langle v_1,v_2\rangle,\ldots)$ in $\Gamma$ is trivial.
\end{proof}

\section{Geometry of linear degenerations - the flat locus}\label{flat}

Since the orbit $\mathcal{O}_{{\bf r}^2}$ is minimal in the flat locus $U_{\rm flat}$, the linear degenerate flag variety ${\rm Fl}^{{\bf r}^2}(V)$ is maximally degenerated, thus we call it the maximally flat (mf)--linear degeneration of the flag variety.


\begin{theorem}\label{t1}
${\rm Fl}^{{\bf r}^2}(V)$ is of dimension ${n+1 \choose 2}$, and its irreducible components are naturally parametrized by non-crossing arc diagrams on $n$ points. Consequently, the number of irreducible components equals the $n$-th Catalan number.
\end{theorem}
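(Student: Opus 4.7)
By Section~\ref{Sec:Loci}, $\mathrm{Fl}^{\mathbf{r}^{2}}(V)\cong\Gr_{\mathbf{e}}(M^{2})$ with $\mathbf{e}=(1,\dots,n)$ and $M^{2}\simeq A\oplus S\oplus(A^{*}/S)$ as in \eqref{Eq:DefM2}. My plan is to apply Theorem~\ref{tc} to the decomposition $M^{2}=P\oplus X$ in which $P=A\oplus P_{n}$ is the maximal projective summand (noting $S_{n}\cong P_{n}$) and the non-projective complement is
$$X=\bigoplus_{i=1}^{n-1}S_{i}\oplus\bigoplus_{j=1}^{n-1}U_{1,j},\qquad \mathbf{dim}\,X=(n,n-1,\dots,2,0).$$
Using the Hom formula of Section~\ref{quiver}, a direct check shows that every indecomposable $U_{a,b}$ with $1\le a\le b\le n-1$ satisfies the single-summand identity $[U_{a,b},X]-[U_{a,b},A^{*}]=1=[U_{a,b},U_{a,b}]$. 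For a general $\overline{N}=\bigoplus_{k}U_{a_{k},b_{k}}^{m_{k}}$ with pairwise distinct pairs $(a_{k},b_{k})$, summing yields
$$[\overline{N},X]-[\overline{N},A^{*}]=\sum_{k}m_{k},\qquad [\overline{N},\overline{N}]=\sum_{k}m_{k}^{2}+\sum_{k\ne l}m_{k}m_{l}\,[U_{a_{k},b_{k}},U_{a_{l},b_{l}}].$$
Since $m_{k}\ge 1$ and the cross-Hom terms are non-negative, the inequality of Theorem~\ref{tc}(1) holds for every admissible $\overline{N}$, so $\dim\Gr_{\mathbf{e}}(M^{2})\le n(n+1)/2$; the matching lower bound comes from the general estimate of Section~\ref{Sec:QG}, proving $\dim\mathrm{Fl}^{\mathbf{r}^{2}}(V)=\binom{n+1}{2}$.

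\textbf{Components as non-crossing arc diagrams.} By Theorem~\ref{tc}(2), equality in the bound forces $m_{k}=1$ for every $k$ together with $[U_{a_{k},b_{k}},U_{a_{l},b_{l}}]=0$ for every $k\ne l$. Thus $\overline{N}=\bigoplus_{k=1}^{r}U_{a_{k},b_{k}}$ is multiplicity-free, and by the explicit Hom formula the mutual vanishing condition translates into three combinatorial axioms on the index pairs: the left endpoints $a_{k}$ are pairwise distinct, the right endpoints $b_{k}$ are pairwise distinct, and no two intervals $[a_{k},b_{k}]$ and $[a_{l},b_{l}]$ interleave. I would encode each summand $U_{a,b}$ as the arc joining $a$ to $b+1$ among the points $\{1,\dots,n\}$; the three axioms then read exactly as the definition of a non-crossing arc diagram on $n$ points with pairwise distinct endpoints. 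A separate bookkeeping with the summand structure of $X$ (placing each $U_{a_{k},b_{k}}$ with $a_{k}<b_{k}$ into the $U_{1,b_{k}}$ summand, each $U_{i,i}$ with $i\ge 2$ into $S_{i}$, and any $U_{1,1}$ summand into one of the two $S_{1}$ copies) shows that every such $\overline{N}$ embeds into $X$ and has $\mathbf{e}-\mathbf{dim}\,\overline{N}$ equal to the dimension vector of a projective subrepresentation of $P$, so every non-crossing arc diagram does produce an irreducible component.

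\textbf{Catalan count and main obstacle.} The non-crossing arc diagrams on $n$ points so defined are in classical bijection with non-crossing partitions of $\{1,\dots,n\}$ (each arc recording a cover relation inside its block), hence their number is $C_{n}=\frac{1}{n+1}\binom{2n}{n}$. The main technical step of this argument is the dictionary in the second paragraph: I would need to check with care that equal left or right endpoints ($a_{k}=a_{l}$ or $b_{k}=b_{l}$) always produce a non-zero morphism in at least one direction, whereas disjoint and strictly nested index pairs are simultaneously Hom-free. Once this correspondence is established, the Catalan count is immediate, and all components automatically have common dimension $n(n+1)/2$.
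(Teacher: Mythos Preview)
Your argument is correct and follows essentially the same route as the paper: apply Theorem~\ref{tc} to $M^{2}$, show that $[\overline{N},X]-[\overline{N},A^{*}]=\sum_{k}m_{k}$ while $[\overline{N},\overline{N}]\ge\sum_{k}m_{k}^{2}$, and read off the non-crossing condition from the equality case. The only differences are cosmetic: the paper takes $P=A$ and $X=S\oplus A^{*}/S$ (so $S_{n}$ stays in $X$) and then rewrites the key quantity as $\dim\Ext^{1}(\overline{N},S)$ via the exact sequence $0\to S\to A^{*}\to A^{*}/S\to 0$, whereas you move $S_{n}=P_{n}$ into $P$ and compute $[\overline{N},X]-[\overline{N},A^{*}]$ directly summand by summand. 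Both computations yield $\sum_{k}m_{k}$, and the remaining combinatorics (multiplicity-free, pairwise Hom-orthogonal summands $\leftrightarrow$ non-crossing arcs via $U_{a,b}\mapsto(a,b+1)$) is identical. Your verification of the embedding of $\overline{N}$ into $X$ and of the dimension condition on $\mathbf{e}-\mathbf{dim}\,\overline{N}$ is in fact more explicit than the paper's ``easily verified''; the dictionary you flag as the ``main obstacle'' is exactly the Hom formula $[U_{a,b},U_{c,d}]=1\Leftrightarrow c\le a\le d\le b$, which indeed forbids equal left endpoints, equal right endpoints, and interlacing, while allowing disjoint and strictly nested pairs.
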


An arc diagram on $n$ points is a subset $A$ of $\{(i,j),\, 1\leq i<j\leq n\}$ (draw an arc from $i$ to $j$ for every element $(i,j)$ of $A$). An arc diagram $A$ is called non-crossing if there is no pair of different elements $(i,j)$, $(k,l)$ in $A$ such that 
$i\le k<j\le l$ 
(that is, two arcs are not allowed to properly cross, or to have the same left or right point. But immediate succession of arcs, like for example $\{(1,2),(2,3)\}$, is allowed).

To a non-crossing arc diagram we associate a rank tuple ${\bf r}(A)$ by
$$r(A)_{i,j}=i-|\{\mbox{arcs in $A$ starting in $[1,i]$ and ending in $[i+1,j]$}\}|.$$
Define $S_A\subset {\rm Fl}^{{\bf r}^2}(V)$ as the set of all tuples $(U_1,\ldots,U_n)$ such that
$${\rm rank}((f_{j-1}\circ\ldots\circ f_i)|_{U_i}:U_i\rightarrow U_j)=r(A)_{i,j}$$
for all $i<j$.

Moreover, define representations $\overline{N}_A$ and $N_A$ of $Q$ by
$$\overline{N}_A=\bigoplus_{(i,j)\in A}U_{i,j-1},\; N_A=\bigoplus_i P_i^{c_i}\oplus \overline{N}_A,$$
where
$$c_i=1+|\{\mbox{arcs ending in $i$}\}|-|\{\mbox{arcs starting in $i$}\}|.$$
It is immediately verified that ${\bf r}(A)$ is precisely the rank tuple of $N_A$.

We have:

\begin{theorem}\label{t3} The irreducible components of ${\rm Fl}^{{\bf r}^2}(V)$ are the closures of the $S_A$, for $A$ a non-crossing arc diagram.
\end{theorem}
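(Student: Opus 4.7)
The plan is to apply Theorem~\ref{tc}(2) to $M=M^2$ and match the resulting parametrization of irreducible components with non-crossing arc diagrams. From the decomposition $M^2\simeq A\oplus S\oplus(A^*/S)$ of \eqref{Eq:DefM2}, the maximal projective summand is $P=A\oplus P_n$ (since $S_n=P_n$) while the non-projective complement is
\[
X \;\simeq\; \bigoplus_{i=1}^{n-1} S_i \;\oplus\; \bigoplus_{i=1}^{n-1} U_{1,i}.
\]
Its indecomposable subrepresentations are exactly the $U_{p,q}$ with $1\leq p\leq q\leq n-1$; I would encode such a summand as the arc $(p,q+1)$ in $\{1,\ldots,n\}$. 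By Krull--Schmidt, any subrepresentation $\overline{N}\subset X$ then takes the form $\overline{N}=\bigoplus_\alpha U_{p_\alpha,q_\alpha}$ and corresponds to a multiset of arcs.

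The core of the argument is an explicit evaluation of the inequality in Theorem~\ref{tc}. Since $A^*$ is the injective cogenerator, $\dim\Hom(\overline{N},A^*)=\dim\overline{N}=\sum_\alpha(q_\alpha-p_\alpha+1)$; a direct count, handling $p=1$ and $p\geq 2$ separately so that the doubled summand $U_{1,1}^2$ in $X$ compensates for the absence of an independent $S_1$, gives $\dim\Hom(U_{p,q},X)=(q-p+1)+1$, hence
\[
\dim\Hom(\overline{N},X)\;-\;\dim\Hom(\overline{N},A^*)\;=\;\#\{\alpha\}.
\]
On the other hand $\dim\End(\overline{N})$ equals this same count plus the number of ordered pairs of distinct indices $\alpha\neq\beta$ with $p_\beta\leq p_\alpha\leq q_\beta\leq q_\alpha$. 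Rewriting this inequality in the arc encoding via $(k,l)=(p_\beta,q_\beta+1)$ and $(i,j)=(p_\alpha,q_\alpha+1)$ yields $k\leq i<l\leq j$ --- \emph{precisely} the crossing condition of the definition. Consequently the difference $\dim\End(\overline{N})-\dim\Hom(\overline{N},X)+\dim\Hom(\overline{N},A^*)$ counts crossings and repetitions of arcs, is always $\geq 0$ (which forces $\dim{\rm Fl}^{\mathbf{r}^2}(V)=n(n+1)/2$ via Theorem~\ref{tc}(1)), and vanishes precisely when $\overline{N}=\overline{N}_A$ for a genuine non-crossing arc diagram $A$.

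It remains to verify the consistency of the complementary projective part and to match the stratifications. For a non-crossing $A$, the distinctness of left (and of right) endpoints implies that at most $k$ arcs pass over position $k$, so $\mathbf{e}-\mathbf{dim}\,\overline{N}_A\geq 0$; a one-line difference computation tracking arcs starting and ending at $k+1$ shows this vector is also non-decreasing, hence the dimension vector of a unique projective $N_P$ which embeds into $P$ componentwise. Substituting $c_k=1+\#\{\text{arcs ending at }k\}-\#\{\text{arcs starting at }k\}$ into
\[
r_{i,j}(N_A)=\sum_{k\leq i}c_k+\#\{(p,q)\in A:\ p\leq i,\ q\geq j+1\}
\]
and telescoping $\sum_{k\leq i}c_k=i-\#\{(p,q)\in A:\ p\leq i,\ q\geq i+1\}$ gives $r_{i,j}(N_A)=i-\#\{(p,q)\in A:\ p\leq i,\ i+1\leq q\leq j\}=r(A)_{i,j}$. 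Therefore $\mathcal{S}_{[N_A]}=S_A$, and Theorem~\ref{tc}(2) identifies the irreducible components of ${\rm Fl}^{\mathbf{r}^2}(V)$ with the closures $\overline{S_A}$ indexed by non-crossing arc diagrams on $n$ points.

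I expect the main technical obstacle to be the Hom/End bookkeeping; in particular, deriving the clean identity $\dim\Hom(U_{p,q},X)-\dim\Hom(U_{p,q},A^*)=1$ uniformly in $p,q$ requires a careful separate treatment of $p=1$ to account for the doubled $U_{1,1}^2$ summand of $X$. Once this identity is in hand, the characterization of equality via non-crossing, the compatibility of $N_P$ with $P$, and the final rank-tuple computation are all routine.
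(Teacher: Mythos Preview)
Your argument is correct and follows the same strategy as the paper: apply Theorem~\ref{tc}(2) to $M^2$, show that the key quantity $\dim\Hom(\overline{N},X)-\dim\Hom(\overline{N},A^*)$ equals the number of indecomposable summands of $\overline{N}$, and then identify equality in the dimension estimate with the non-crossing condition on the associated arcs. The only difference is cosmetic: the paper takes $P=A$ (rather than your maximal projective $A\oplus P_n$, which is harmless since $\Hom(\overline{N},S_n)=0$) and obtains the summand count via the short exact sequence $0\to S\to A^*\to A^*/S\to 0$ and injectivity of $A^*$, rewriting the difference as $\dim\Ext^1(\overline{N},S)=\sum_{i\leq j}n_{i,j}$, whereas you compute $\dim\Hom(U_{p,q},X)$ directly; your explicit verification of the rank-tuple identity $r_{i,j}(N_A)=r(A)_{i,j}$ supplies a detail the paper merely asserts.
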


\subsection{Proofs of the theorems}\label{Sec:Proofs}

We can now combine the results and methods developed so far to give proofs of Theorems \ref{t1}, \ref{t2} and \ref {t3}.


To prove Theorem \ref{t1}, we consider $M^2=P\oplus X$ with $P=A$ and $X=S\oplus A^*/S$ and reformulate the criterion of Theorem \ref{tc}. Using the exact sequence
$$0\rightarrow S\rightarrow A^*\rightarrow A^*/S\rightarrow 0,$$
and injectivity of $A^*$, we can rewrite
$$\dim{\rm Hom}(\overline{N},S\oplus A^*/S)-\dim{\rm Hom}(\overline{N},A^*)=\dim{\rm Ext}^1(\overline{N},S).$$
We thus have to check the inequality
$$\dim{\rm End}(\overline{N})\geq\dim{\rm Ext}^1(\overline{N},S).$$
Writing
$$\overline{N}=\bigoplus_{1\leq i\leq j<n}U_{i,j}^{n_{i,j}},$$
we have
$$\dim{\rm Ext}^1(\overline{N},S)=\sum_{1\leq i\leq j<n}n_{i,j},$$
and certainly
$$\dim{\rm End}(\overline{N})\geq \sum_{1\leq i\leq j<n}n_{i,j}^2.$$
This proves the claim about the dimension of ${\rm Gr}_{\bf e}(M^2)$. The irreducible components are parametrized by the representations $N$ as above for which the direct summand $\overline{N}$ satisfies
$$\dim{\rm End}(\overline{N})=\dim{\rm Ext}^1(\overline{N},S).$$
To satisfy this equality, it is thus necessary and sufficient for $\overline{N}$ to have all multiplicities $n_{i,j}$ of indecomposables equal to either $0$ or $1$, and there should be no non-zero maps between those $U_{i,j}$ for which $n_{i,j}=1$. But this can be made explicit since
$$\dim{\rm Hom}(U_{i,j},U_{k,l})=1\mbox{ if }k\leq i\leq l\leq j,$$
and zero otherwise. Thus $\overline{N}$ has to be of the form
$$\overline{N}=\bigoplus_{(i,j)\in I}U_{i,j-1}$$
for a set $I$ of pairs $(i,j)$ with $i\leq j$, such that there is no pair of different elements $(i,j),(k,l)\in I$ 
fulfilling $i\leq k<j\leq l$. These are precisely the representations $\overline{N}_A$ associated to non-crossing arc diagrams introduced above. It suffices to check that these $\overline{N}$ fulfill the additional assumptions, that is, that they embed into $S\oplus A^*/S$ and the condition on dimension vectors. But this is easily verified.

We now turn to the proof of the first part of Theorem \ref{t2}. Suppose that $M$ does not degenerate to $M^2$. By Theorem \ref{t5}, $M$ is a degeneration of some $M({\bf a}^{i,j})$. We claim that ${\rm Gr}_{\bf e}(M({\bf a}^{i,j}))$ has dimension strictly bigger than $n(n+1)/2$. Namely, we choose $\overline{N}=S_i\oplus S_j$. The conditions of Theorem \ref{tc} are easily seen to be violated. By upper
semi-continuity of fiber dimensions, $\dim{\rm Gr}_{\bf e}(M)$ is also strictly bigger than $n(n+1)/2$. On the other hand, again applying semi-continuity of fiber dimensions, since ${\rm Gr}_{\bf e}(M^2)$ has the correct dimension, also ${\rm Gr}_{\bf e}(M')$ has dimension $n(n+1)/2$ for every representation $M'$ degenerating to $M^2$. But by the first part of Theorem \ref{ag}, the flat locus in $R$ is precisely the locus where the fibers have the minimal dimension. 

For the second part of Theorem \ref{t2} we argue similarly: Suppose that $M$ does not degenerate to $M^1$. By Theorem \ref{t5}, $M$ is a degeneration of some $M({\bf a}^{i})$. We claim that ${\rm Gr}_{\bf e}(M({\bf a}^{i}))$ is reducible. Namely, we consider the two subrepresentations $N_1$ and $N_2$ given by $N_1=A$ and $N_2=\bigoplus_{j\neq i} P_j\oplus S_i\oplus P_{i+1}$ so that $\overline{N_1}=0$ and $\overline{N_2}=S_i$ (notation as in section~\ref{Sec:QG}). Both $N_1$ and $N_2$ fulfill equality in the estimate of Theorem \ref{tc}, thus ${\rm Gr}_{\bf e}(M({\bf a}^i))$ has at least two irreducible components. We claim that ${\rm Gr}_{\bf e}(M)$ is reducible. Suppose that it is irreducible. We consider the subset $U\subset R_\mathbf{d}(Q)$ consisting of all representations degenerating to $M$: it is an irreducible open subset in $R_\mathbf{d}(Q)$. The restriction $\pi^{-1}(U)\rightarrow U$ of $\pi$ is $G_\mathbf{d}$-equivariant and flat, and the orbit of $M$ in $U$ is the only closed orbit. By the second part of Theorem \ref{ag}, ${\rm Gr}_{\bf e}(M({\bf a}^{i}))$ is irreducible, a contradiction.
\par
On the other hand, since ${\rm Gr}_{\bf e}(M^1)$ is irreducible, by the second part of Theorem \ref{ag}, ${\rm Gr}_{\bf e}(M')$ is irreducible for every representation degenerating to $M^1$.

\subsection{Geometric properties of the mf--linear degenerate flag variety}

\begin{theorem} The scheme ${\rm Fl}^{{\bf r}^2}(V)$ is reduced and locally  a complete intersection. Consequently, all linear degenerations of flag varieties over $U_{\rm flat}$ are reduced locally complete intersection varieties.
\end{theorem}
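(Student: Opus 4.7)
The plan is to establish the locally complete intersection property uniformly over $U_{\rm flat}$, then prove reducedness first for the maximally degenerate fiber ${\rm Fl}^{{\bf r}^2}(V)$, and finally propagate reducedness to all other fibers over $U_{\rm flat}$. For the lci statement, recall from Section~\ref{setup} that the total space $Y$ is smooth and irreducible while the base $R$ is an affine space, hence smooth. Over $U_{\rm flat}$ the map $\pi$ is flat, and by Theorem~\ref{t1} every fiber has dimension $n(n+1)/2=\dim Y-\dim R$. Locally at any $f_*\in U_{\rm flat}$, the fiber $\pi^{-1}(f_*)$ is therefore cut out in the smooth scheme $Y$ by the pullback of a system of local parameters on $R$ at $f_*$; flatness forces this pullback to be a regular sequence in every local ring of $Y$ along the fiber. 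Hence every such fiber is locally a complete intersection, and in particular Cohen--Macaulay.

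For reducedness of ${\rm Fl}^{{\bf r}^2}(V)$, I invoke Theorem~\ref{ag}(3): since Cohen--Macaulayness is in hand, it suffices to exhibit generic reducedness. I combine the stratification ${\rm Fl}^{{\bf r}^2}(V)=\bigsqcup_{[N]}\mathcal{S}_{[N]}$ with the description in Theorem~\ref{t3} of the irreducible components as the closures $\overline{\mathcal{S}_{[N_A]}}$ indexed by non-crossing arc diagrams $A$: since $\mathcal{S}_{[N_A]}$ is open in its closure, it suffices to find inside each such stratum a point at which the ambient quiver Grassmannian is smooth. At any $U\in\mathcal{S}_{[N_A]}$, the Zariski tangent space to $\Gr_{\bf e}(M^2)$ is canonically $\Hom_Q(U,M^2/U)$, so this reduces to proving
\[
\dim\Hom_Q(N_A,M^2/N_A)=n(n+1)/2.
\]
By the long exact sequence
\[
0\to\End(N_A)\to\Hom(N_A,M^2)\to\Hom(N_A,M^2/N_A)\to\Ext^1(N_A,N_A),
\]
combined with the identity $\dim\Hom(N_A,M^2)-\dim\End(N_A)=n(n+1)/2$ extracted from the proof of Theorem~\ref{t1}, this is equivalent to the vanishing of the connecting map on the right. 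To verify the vanishing, I use the decomposition $M^2\simeq{\rm rad}(A)\oplus S\oplus A^*$ from~(\ref{Eq:DefM2}) together with the explicit form $N_A=\bigoplus_i P_i^{c_i}\oplus\bigoplus_{(i,j)\in A}U_{i,j-1}$: the non-crossing condition on $A$ matches precisely the $\Ext^1$-pattern among the summands (recall $\Ext^1(U_{k,l},U_{i,j})\neq 0$ iff $k+1\leq i\leq l+1\leq j$), so that each potentially obstructing class in $\Hom(N_A,M^2/N_A)$ lifts back to $\Hom(N_A,M^2)$.

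The main technical obstacle is this combinatorial-homological lifting argument; once it is in place, the rest of the theorem follows without difficulty. Since $\pi$ is projective (as $Z$ is projective) and flat on $U_{\rm flat}$, Theorem~\ref{ag}(2) ensures that the locus of $f_*\in U_{\rm flat}$ with reduced fiber is open and $G$-invariant. If it were a proper subset of $U_{\rm flat}$, its complement would be a non-empty closed $G$-invariant subset which, by Theorem~\ref{t2}(1), must contain the minimal orbit $\mathcal{O}_{{\bf r}^2}$; but this would contradict reducedness of ${\rm Fl}^{{\bf r}^2}(V)$ established above. The lci assertion for arbitrary fibers over $U_{\rm flat}$ is already contained in the first paragraph.
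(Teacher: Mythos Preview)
Your lci argument in the first paragraph is correct and in fact cleaner than the paper's: once you know $Y$ and $R$ are smooth and $\pi$ is flat over $U_{\rm flat}$, the fibers are automatically cut out by regular sequences and hence lci. The paper instead realizes ${\rm Gr}_{\bf e}(M)$ as a quotient of an explicit affine subvariety ${\rm Hom}^0({\bf e},M)\subset\mathcal{M}$, counts equations, and matches this against the codimension computed from Theorem~\ref{t1}. Both routes work; yours is more conceptual. Your propagation argument for reducedness in the last paragraph is also sound.

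The genuine gap is in your generic reducedness step. You write ``$\dim\Hom_Q(N_A,M^2/N_A)$'' as if the quotient $M^2/N_A$ were determined by the isomorphism class of $N_A$; it is not. Different embeddings of $N_A$ into $M^2$ produce non-isomorphic quotients and hence different tangent space dimensions, so the long exact sequence and the connecting map you write down depend on a choice you never make. More seriously, your claim that ``the non-crossing condition on $A$ matches precisely the ${\rm Ext}^1$-pattern among the summands'' is false as stated: non-crossing allows consecutive arcs such as $A=\{(1,2),(2,3)\}$, and for these $\overline{N}_A=S_1\oplus S_2$ with ${\rm Ext}^1(S_1,S_2)\neq 0$; one also has ${\rm Ext}^1(S_1,P_2)\neq 0$, so ${\rm Ext}^1(N_A,N_A)\neq 0$. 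Thus vanishing of the connecting map cannot be read off from the non-crossing condition alone, and your lifting assertion is unjustified.

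The paper handles exactly this difficulty by introducing the dual arc diagram $A^*$ and constructing, for each $A$, an explicit short exact sequence $0\to N_A\to M^2\to Q_{A^*}\to 0$ as a direct sum of elementary sequences. It then computes $\dim{\rm Hom}(N_A,Q_{A^*})=n(n+1)/2$ directly for this particular quotient. This exhibits one specific point in each stratum $\mathcal{S}_{[N_A]}$ with tangent space of the correct dimension, which is all that is needed. To repair your argument you would have to either carry out a comparable explicit construction or prove that ${\rm Ext}^1(N_A,N_A)\to{\rm Ext}^1(N_A,M^2)$ is injective for some chosen embedding; neither follows from the combinatorics you invoke.
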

Let us first show that the scheme ${\rm Fl}^{{\bf r}^2}(V)$ is locally a complete intersection. By definition ${\rm Fl}^{{\bf r}^2}(V)={\Gr}_\mathbf{e}(M)$ where $M\simeq M^{2}$ is the $Q$--representation defined by a tuple $f_\ast=(f_1,\cdots, f_{n-1})$ such that $\mathbf{r}(f_\ast)=\mathbf{r}^2$ (see Definition~\ref{Def:R1R2}) and $\mathbf{e}=(1,2,\cdots, n)$. Consider the affine variety $\textrm{Hom}(\mathbf{e},M)$ consisting of tuples $((N_i), (g_i))$ inside the vector space $\mathcal{M}:=R_\mathbf{e}\times \prod_{i=1}^n {\rm Hom}_\mathbb{C}(\mathbb{C}^{i}, \mathbb{C}^{n+1})$ such that $f_{i+1}\circ g_i=g_{i+1}\circ N_i$ for every $i=1,\cdots, n-1$. The quiver Grassmannian ${\Gr}_\mathbf{e}(M)$ can be realized as a geometric quotient ${\Gr}_\mathbf{e}(M)\simeq \textrm{Hom}^0(\mathbf{e},M)/G_\mathbf{e}$ (see \cite{CFR1}) where $\textrm{Hom}^0(\mathbf{e},M)$ is the open subvariety of $\textrm{Hom}(\mathbf{e},M)$ consisting of points $((N_i), (g_i))$ such that all the maps $g_i$ are injective. It is hence enough to show that $\textrm{Hom}^0(\mathbf{e},M)$ is locally complete intersection. We already know that ${\rm Fl}^{{\bf r}^2}(V)$ is equidimensional of dimension $\frac{n(n+1)}{2}$ and hence $\textrm{Hom}^0(\mathbf{e},M)$ is equidimensional of dimension $\frac{n(n+1)}{2}+\textrm{dim}\, G_\mathbf{e}=\frac{n(n+1)(n+2)}{3}$. Its codimension in $\mathcal{M}$ is given by $\frac{n(n+1)^2}{2}$.
This is precisely the number of equations defining $\textrm{Hom}(\mathbf{e},M)$ inside $\mathcal{M}$ and hence $\textrm{Hom}^0(\mathbf{e},M)$ is locally a complete intersection. 

Now that we know that the scheme ${\rm Fl}^{{\bf r}^2}(V)$ is locally a complete intersection, once we prove that it is also generically reduced, we can apply the third part of Theorem \ref{ag} to conclude that it is reduced. 

We hence prove that ${\rm Fl}^{{\bf r}^2}(V)$ is generically reduced.  For this, we first consider duality of non-crossing arc diagrams.

Let $A$ be an arc diagram as above. A pair $(i,j)$ with $i\leq j$ of indices is called a {\it chain} if there is a sequence of arcs $$(i=i_0,i_1),(i_1,i_2),\ldots,(i_{k-1},i_k=j)$$ in $A$. 
It is called a {\it complete chain} if it is a chain, and there is neither an arc ending in $i$ nor an arc starting in $j$. In particular, an isolated vertex $i$ (that is, $i$ 
is not connected to any arc) counts as a complete chain $(i,i)$ of length $0$.

Define $A^*$, the dual of $A$, as
$$A^*=\{(i-1,j)\mid i\geq 2\mbox{ and }(i,j)\mbox{ is a complete chain in }A\}.$$
We denote the map $A\mapsto A^*$ by $*$.
Let us also denote by ${\rm op}$ the symmetry on the arc diagrams induced by the Dynkin diagram symmetry $i\mapsto n+1-i$.
\begin{lemma}
The map $A\mapsto {\rm op}A^*$ is an involution. 
\end{lemma}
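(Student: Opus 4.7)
Define an auxiliary ``right-shift'' dual $D'$ on non-crossing arc diagrams by
$$D'(A) := \{(l, r+1) : (l,r) \text{ is a complete chain of } A \text{ with } r \leq n-1\}.$$
The plan is to show (i) $\mathrm{op}\circ *\circ\mathrm{op} = D'$ and (ii) $D'\circ * = \mathrm{id}$; since $\mathrm{op}^2 = \mathrm{id}$ is visible, these together yield $(\mathrm{op}\circ *)^2 = \mathrm{op}\circ *\circ\mathrm{op}\circ * = D'\circ * = \mathrm{id}$.

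Part (i) will be a direct computation, using that $\mathrm{op}$ sends an arc $(i,j)$ to $(n+1-j, n+1-i)$ and carries a complete chain $(l,r)$ of $A$ to the complete chain $(n+1-r, n+1-l)$ of $\mathrm{op}(A)$. Part (ii)---the combinatorial heart---will be proved by exhibiting an explicit bijection
$$\{\text{arcs of } A\} \longleftrightarrow \{(u,v) : (u,v) \text{ is a complete chain of } A^* \text{ with } v \leq n-1\},\qquad (c,c')\longleftrightarrow (c,c'-1),$$
from which $D'(A^*) = A$ is immediate by inspection. The forward direction exploits the non-crossing hypothesis: every component $C$ of $A$ has its vertices totally ordered as $c_1<\cdots<c_s$ with arcs exactly $(c_i, c_{i+1})$, and between consecutive $c_i$ and $c_{i+1}$ the ``top-level'' components of $A$ nested inside the gap tile $\{c_i+1,\ldots,c_{i+1}-1\}$ by consecutive subintervals $[l^{(1)}, r^{(1)}],\ldots,[l^{(t)}, r^{(t)}]$. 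The corresponding arcs $(l^{(j)}-1, r^{(j)})$ of $A^*$ concatenate (right endpoint of each equals left endpoint of the next) into the single complete chain $(c_i, c_{i+1}-1)$ of $A^*$, degenerating to the isolated vertex $(c_i, c_i)$ when the gap is empty.

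The main obstacle will be the reverse direction: given a complete chain $(u,v)$ of $A^*$ with $v\leq n-1$, one must recover the arc $(u, v+1)\in A$. Completeness at $u$ implies that $u$ is not the right endpoint of any $A^*$-arc; combined with $v\leq n-1$ (which rules out the exceptional case of $u$ being the right end of the component of $A$ containing vertex $1$, for which the associated $A^*$-chain can be shown always to reach $v=n$), this forces $u$ to be a non-rightmost vertex of its $A$-component, so the next vertex $w$ of this component exists. Completeness at $v$ similarly forces $v+1$ to have a left-neighbor in its $A$-component. The crux is then to identify $w$ with $v+1$: I plan to unwind the chain of $A^*$-arcs from $u$ to $v$ as a concatenation of complete chains of $A$ strung end-to-end (each gluing forced by the definition of $*$) and to identify this concatenation with the tiling of the gap between $u$ and $w$ in the component of $u$, using the non-crossing property to ensure no vertices are missed. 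The unique complete chain of $A^*$ whose right endpoint is $n$---namely the one containing the ``tail'' of chains extending right from the component of vertex $1$---is precisely the one excluded by $v \leq n-1$, accounting for the constant discrepancy of $1$ between the number of complete chains of $A^*$ and the number of arcs of $A$.
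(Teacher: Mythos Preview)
Your approach is correct and in fact more self-contained than the paper's. The paper argues the equivalent identity $\mathrm{op}(*^{-1}A)=(\mathrm{op}\,A)^*$ by decomposing $A$ into its complete chains and tracking, for each one, the arc it contributes to $(\mathrm{op}\,A)^*$ together with the resulting complete chains of $(\mathrm{op}\,A)^*$; it then asserts that $\mathrm{op}(*^{-1}A)$ has the same constituents. This presupposes that $*$ is a bijection (so that $*^{-1}$ makes sense), whereas your reformulation via the auxiliary map $D'$ and the identity $D'\circ * = \mathrm{id}$ establishes invertibility of $*$ as a byproduct. The combinatorial core of both arguments is the same observation: the complete chain of $A^*$ beginning at the left endpoint $c$ of an arc $(c,c')\in A$ terminates exactly at $c'-1$, because the gap $\{c+1,\dots,c'-1\}$ is tiled by the spans of its top-level nested chains.

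One simplification worth making: the reverse direction you sketch (recovering $(u,v+1)\in A$ from a complete chain $(u,v)$ of $A^*$) is more work than needed. Once the forward map $(c,c')\mapsto(c,c'-1)$ is shown to land among the complete chains of $A^*$ with right endpoint at most $n-1$, injectivity is immediate, since distinct arcs in a non-crossing diagram have distinct left endpoints. Your own count at the end then finishes the job: $|A^*|=(n-|A|)-1$, so $A^*$ has $|A|+1$ complete chains, exactly one of which ends at $n$; hence both sides of your bijection have cardinality $|A|$. This bypasses the direct chain-unwinding you outline and removes the part you flag as the main obstacle.
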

\begin{proof}
We need to show that ${\rm op} (*^{-1}A)=({\rm op}A)^*$. Let $$(i=i_0,i_1),(i_1,i_2),\ldots,(i_{k-1},i_k=j)$$ be a complete chain of arcs in $A$.
After applying the composite map $*op$ to this chain we obtain a long arc $(n-j,n+1-i)$ and $k$ complete chains of the form
\[
(n+1-i_{a+1},n+1-i_{a+1}+1),(n+1-i_{a+1}+1,n+1-i_{a+1}+2),\dots, (n-i_a-1,n-i_a),  
\] 
$a=0,\dots,k-1$.
We conclude that $({\rm op}A)^*$ consists of the parts described above (one part for each complete chain in $A$) . Now it suffices to note that
${\rm op} (*^{-1}A)$ consists of the same parts.
\end{proof}

Using the self-duality of the representation $M^2$ under the previous symmetry, we can define quotient representations $Q_A$ of $M^2$ dually to the subrepresentations $N_A$ of $M^2$ for every non-crossing arc diagram $A$. More precisely, we have
$$\overline{Q}_A=\bigoplus_{(i,j)\in A}U_{i+1,j},\; Q_A=\overline{Q}_A\oplus\bigoplus_i I_i^{d_i},$$
where
$$d_i=1+|\{\mbox{arcs starting in $i$}\}|-|\{\mbox{arcs ending in $i$}\}|.$$

The following is then proved by an explicit construction:

\begin{proposition} For every arc diagram $A$, there exists a short exact sequence
$$0\rightarrow N_A\rightarrow M^2\rightarrow Q_{A^*}\rightarrow 0.$$
We have $$\dim{\rm Hom}(N_A,Q_{A^*})=n(n+1)/2.$$
\end{proposition}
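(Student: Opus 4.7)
The plan is to handle the Hom dimension first (it follows from formal considerations), then to sketch the construction of the short exact sequence.

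For $\dim \mathrm{Hom}(N_A, Q_{A^*})$, the Euler form identity gives
$$\dim \mathrm{Hom}(N_A, Q_{A^*}) - \dim \mathrm{Ext}^1(N_A, Q_{A^*}) = \langle \mathbf{e}, \mathbf{d}-\mathbf{e}\rangle,$$
and a direct computation produces $\langle \mathbf{e}, \mathbf{d}-\mathbf{e}\rangle = \sum_{i=1}^n i(n+1-i) - \sum_{i=1}^{n-1} i(n-i) = n(n+1)/2$. It therefore suffices to prove $\mathrm{Ext}^1(N_A, Q_{A^*}) = 0$. Since each $P_i$-summand of $N_A$ is projective and each $I_j$-summand of $Q_{A^*}$ is injective, this reduces to $\mathrm{Ext}^1(\overline{N}_A, \overline{Q}_{A^*}) = 0$, and summand-by-summand to $\mathrm{Ext}^1(U_{i_0, j_0-1}, U_{i'+1, j'}) = 0$ for every $(i_0, j_0)\in A$ and $(i', j')\in A^*$. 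By the Ext criterion of Section~\ref{quiver}, non-vanishing would force $i_0 \le i' < j_0 \le j'$. Since $(i', j')\in A^*$ arises from a complete chain $i'+1 = i_0' < i_1' < \cdots < i_m' = j'$ in $A$ (with $i_0' \ge 2$), the putative arc $(i_0, j_0)$ would necessarily either share an endpoint with, or properly cross, one of the arcs $(i_s', i_{s+1}')$ of this chain, contradicting non-crossing. A short case analysis ($i_0 = i'$ versus $i_0 < i'$, together with the possible positions of $j_0$ relative to the internal vertices $i_s'$ and the right end $j'$) completes the argument.

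For the existence of the sequence, I would build it explicitly using the decomposition of $A$ into complete chains. Non-crossing ensures that the complete chains of $A$ (counting isolated vertices as trivial chains) partition $\{1, \ldots, n\}$. For each complete chain $i_0 < i_1 < \cdots < i_k$, the arcs contribute $\bigoplus_{s=0}^{k-1} U_{i_s, i_{s+1}-1}$ to $\overline{N}_A$ and (when $i_0 \ge 2$) the dual arc $(i_0-1, i_k) \in A^*$ contributes $U_{i_0, i_k}$ to $\overline{Q}_{A^*}$. Using the elementary short exact sequences
$$0 \to U_{i, j-1} \to I_{j-1} \to I_{i-1} \to 0 \quad \text{and} \quad 0 \to P_j \to P_i \to U_{i, j-1} \to 0,$$
one assembles a local exact sequence per chain that absorbs the correct projective and injective multiplicities $c_{i_s}$ and $d_{i_s}$. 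Taking the direct sum over all chains, together with identity maps on the diagonal $S$-summands of $M^2 \simeq \mathrm{rad}(A) \oplus S \oplus A^*$, produces the desired global sequence. In the base case $A = \emptyset$, this reduces to embedding each $P_p$ into the corresponding summand of $\mathrm{rad}(A)$ (or into $I_n = P_1$ when $p = 1$), with the expected quotient $Q_{\emptyset^*}$.

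The main obstacle lies in the local-to-global bookkeeping. Extensions of $Q_{A^*}$ by $N_A$ are classified by $\mathrm{Ext}^1(Q_{A^*}, N_A)$, which is nonzero in general, so the existence of a sequence whose middle term is specifically $M^2$ is a genuine identification rather than a formal consequence. Verifying that the direct sum of the local exact sequences recovers exactly the isomorphism class of $M^2$, and not another extension with the same dimension vector, is the technical heart of the argument.
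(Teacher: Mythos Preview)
Your argument for $\dim\Hom(N_A,Q_{A^*})=n(n+1)/2$ is correct and proceeds differently from the paper. The paper computes $\Hom$ summand by summand and finds
\[
\dim\Hom(N_A,Q_{A^*})=\tfrac{n(n+1)}{2}+\bigl|\{((i,j),(k,l))\in A\times A^*:\ k<i\le l<j\}\bigr|,
\]
the extra term being $\dim\Hom(\overline{N}_A,\overline{Q}_{A^*})$, and then argues combinatorially that this extra term vanishes. You instead invoke the Euler form to reduce to $\Ext^1(N_A,Q_{A^*})=0$, which by projectivity/injectivity reduces to $\Ext^1(\overline{N}_A,\overline{Q}_{A^*})=0$, and your combinatorial argument ruling out the configuration $i_0\le i'<j_0\le j'$ is valid (one small addition to your case list: when $j_0$ equals the left end $i_0'$ of the complete chain, the contradiction comes not from sharing an endpoint with a chain arc but from completeness---no arc ends at $i_0'$). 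The two approaches are equivalent via the Euler form identity, but yours is arguably cleaner since it bypasses the somewhat involved direct summation of four Hom terms.

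Your sketch of the exact sequence is in the right spirit---assembling $0\to N_A\to M^2\to Q_{A^*}\to 0$ as a direct sum of elementary pieces indexed by arcs and complete chains---but your proposed building blocks will not produce $M^2$ in the middle. The representation $M^2$ contains the full semisimple summand $S=\bigoplus_{i=1}^n S_i$, and your sequences $0\to U_{i,j-1}\to I_{j-1}\to I_{i-1}\to 0$ and $0\to P_j\to P_i\to U_{i,j-1}\to 0$ have no simple summands in the middle; nor can the $S_i$ be handled by ``identity maps on the diagonal $S$-summands'', since a general $S_i$ is not a direct summand of $N_A$ or of $Q_{A^*}$. The paper's fix is to use instead
\[
0\to U_{i,j-1}\to I_{j-1}\oplus S_i\to I_i\to 0\quad\text{for each arc }(i,j)\in A,
\]
\[
0\to P_j\to P_i\oplus S_j\to U_{i,j}\to 0\quad\text{for each complete chain }(i,j)\text{ in }A,
\]
together with trivial sequences $0\to P_j\to P_j\to 0\to 0$ (for each $j$ that is the right end of an arc) and $0\to 0\to I_i\to I_i\to 0$ (for each $i$ with no arc $(j,i+1)\in A$). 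Each vertex $i$ is either the left end of an arc or the right end of a complete chain, so exactly one $S_i$ appears, and summing recovers $M^2$. Note also that your second building block has quotient $U_{i,j-1}$, whereas the summand of $\overline{Q}_{A^*}$ coming from a complete chain $(i,j)$ is $U_{i,j}$; the extra $S_j$ in the paper's version is precisely what corrects this.
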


\begin{proof} We consider the direct sum of the following short exact sequences:
\begin{itemize}
\item One copy of
$$0\rightarrow U_{i,j-1}\rightarrow I_{j-1}\oplus S_i\rightarrow I_i\rightarrow 0$$
for every arc $(i,j)$ in $A$,
\item one copy of
$$0\rightarrow P_j\rightarrow P_i\oplus S_j\rightarrow U_{i,j}\rightarrow 0$$
for every complete chain from $i$ to $j$ in $A$ (equivalently, for every arc $(i+1,j)$ in $A^*$),
\item one copy of
$$0\rightarrow P_j\rightarrow P_j\rightarrow 0\rightarrow 0$$
for every $j$ such that there exists an arc $(i,j)$ in $A$,
\item one copy of
$$0\rightarrow 0\rightarrow I_i\rightarrow I_i\rightarrow 0$$
for every $i$ such that there is no arc $(j,i+1)$ in $A$.
\end{itemize}

Using the definition of $N_A$, $M^{(2)}$ and $Q_{A^*}$, as well as the definition of $A^*$ and the fact that $A$ is non-crossing, one can verify in a straightforward way that this direct sum yields the desired exact sequence.

Moreover, one computes
$$\dim {\rm Hom}(N_A,Q_{A^*})=n(n+1)/2+|\{((i,j),(k,l))\in A\times A^*\, :\, k<i\leq l<j\}|.$$
Again using the definition of $A^*$, the second summand is seen to equal zero.

Using the previous proposition, for every irreducible component of ${\rm Gr}_{\bf e}(M^2)$, we find a specific subrepresentation $U$ of $M^2$ (namely the one given by the above exact sequence) for which the tangent space $T_U{\rm Gr}_{\bf e}(M^2)\simeq{\rm Hom}(U,M^2/U)$ is of dimension $\dim{\rm Gr}_{\bf e}(M^2)$, proving generic reducedness.
\end{proof}

\subsection{Desingularizations of the irreducible components}
In this subsection we describe explicitly the desingularization of irreducible components of ${\rm Fl}^{{\bf r}^2}(V)$. 
In particular, we reprove that
the dimension of every component is equal to $n(n+1)/2$. Our main tool is the general construction of \cite{CFR2}.

Let $A$ be an arc diagram. The irreducible components are labeled by the non-crossing arc diagrams. 
For a non-crossing arc diagram $A$ the irreducible component is the closure $\overline{S_A}$ of the subset $S_A\subset {\rm Fl}^{{\bf r}^2}(V)$ 
of all tuples $(U_1,\ldots,U_n)$ such that
${\rm rank}((f_{j-1}\circ\ldots\circ f_i)|_{U_i}:U_i\rightarrow U_j)=r(A)_{i,j}$, where 
$$r(A)_{i,j}=i-|\{\mbox{arcs in $A$ starting in $[1,i]$ and ending in $[i+1,j]$}\}|.$$

The desingularization $R_A$ is formed by the collections of vector spaces $U_{i,j}\subset V$, $1\le i\le j\le n$ subject to the following conditions:
\begin{enumerate}
\item $U_{i,j}\subset {\rm Im} (f_{j-1}\circ\ldots\circ f_i),\ \dim U_{i,j}=r(A)_{i,j}$, \label{1}
\item $U_{i,j}\subset U_{i+1,j},\ f_j U_{i,j}\subset U_{i,j+1}$. \label{2}
\end{enumerate}
The map $R_A\to \overline{S_A}$ sends a collection $(U_{i,j})_{i,j}$ to $(U_{i,i})_{i=1}^n$. 

\begin{lemma}
Each variety $R_A$ is isomorphic to a tower $R_A=R_A(1)\to\dots\to R_A(N)={\rm pt}$, $N=n(n+1)/2$, where each map $R_A(k)\to R_A(k+1)$
is a fibration with the fibers being Grassmannians. 
\end{lemma}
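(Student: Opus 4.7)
The plan is to introduce the subspaces $U_{i,j}$ one at a time in an order compatible with the containment and image constraints, and at each step realize the new choice as a point of a Grassmannian that depends only on previously chosen data.

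To this end, I would order the index set $\{(i,j):1\le i\le j\le n\}$ lexicographically by $(j,i)$ (with $i$ ascending within each column $j$), yielding a sequence $(i_1,j_1)<(i_2,j_2)<\cdots<(i_N,j_N)$ with $N=n(n+1)/2$. For each $k$, let $R_A(k)$ be the variety of partial collections $(U_{i_\ell,j_\ell})_{\ell\le N-k+1}$ satisfying those conditions (1), (2) in the definition of $R_A$ which involve only pairs in the prefix $\ell\le N-k+1$. Then $R_A(1)=R_A$ and the ``forget the last introduced pair'' maps give a tower $R_A(1)\to R_A(2)\to\cdots$ terminating at a point (matching the statement up to an indexing convention).

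The key local analysis is this. Writing $(i,j):=(i_{N-k+1},j_{N-k+1})$ for the pair introduced when passing from $R_A(k+1)$ to $R_A(k)$, set
$$
W_k := U_{i-1,j}+f_{j-1}\bigl(U_{i,j-1}\bigr),
$$
with the convention that out-of-range terms are taken to be zero. Because of the chosen ordering, when we append $U_{i,j}$ the only relevant constraints from condition (2) are $U_{i-1,j}\subset U_{i,j}$ and $f_{j-1}(U_{i,j-1})\subset U_{i,j}$; the ``outgoing'' constraints involving $U_{i+1,j}$ and $U_{i,j+1}$ are enforced at later stages of the tower. Therefore the fiber of $R_A(k)\to R_A(k+1)$ over a point of $R_A(k+1)$ is the set of subspaces $U\subset V$ with $W_k\subset U\subset \mathrm{Im}(f_{j-1}\circ\cdots\circ f_i)$ and $\dim U=r(A)_{i,j}$, i.e.\ the Grassmannian
$$
\mathrm{Gr}\bigl(r(A)_{i,j}-\dim W_k,\ \mathrm{Im}(f_{j-1}\circ\cdots\circ f_i)/W_k\bigr).
$$

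The main obstacle I anticipate is showing that $\dim W_k$ is constant on $R_A(k+1)$, so that the fibers are pairwise isomorphic and the tower is one of genuine Grassmannian bundles. Since $\dim U_{i-1,j}$ and $\dim U_{i,j-1}$ are prescribed by condition (1), this reduces to the constancy of $\dim(U_{i,j-1}\cap\ker f_{j-1})$ and of $\dim(U_{i-1,j}\cap f_{j-1}(U_{i,j-1}))$, which I would verify by a case analysis tracking how the arcs of the non-crossing diagram $A$ interact with the rank function $r(A)$ and the explicit decomposition of $M^2$. Once this constancy is in hand, summing the dimensions of the Grassmannian fibers across all $k$ recovers $\dim R_A=n(n+1)/2$, reproving the dimension assertion for each component $\overline{S_A}$ of $\mathrm{Fl}^{\mathbf{r}^2}(V)$.
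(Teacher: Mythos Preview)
Your approach is correct and proves the same lemma, but you build the tower in a different order than the paper does. You add the pairs $(i,j)$ column by column (increasing $j$, then increasing $i$), so that when $(i,j)$ is introduced the already-placed neighbours are $(i-1,j)$ and $(i,j-1)$; the relevant constraints are the \emph{incoming} ones $U_{i-1,j}\subset U_{i,j}$ and $f_{j-1}(U_{i,j-1})\subset U_{i,j}$, and the fibre is the Grassmannian of subspaces between $W_k=U_{i-1,j}+f_{j-1}(U_{i,j-1})$ and the fixed ambient $\mathrm{Im}(f_{j-1}\circ\cdots\circ f_i)$. The paper instead starts from $L(N)=\{(1,n)\}$ and adds $(i,j)$ only once $(i-1,j)$ and $(i,j+1)$ are present; the relevant constraints at each step are then $U_{i-1,j}\subset U_{i,j}$ and $f_jU_{i,j}\subset U_{i,j+1}$, and the fibre is the Grassmannian
\[
\mathrm{Gr}\bigl(r(A)_{i,j}-r(A)_{i-1,j},\ f_j^{-1}U_{i,j+1}\cap \mathrm{Im}(f_{j-1}\circ\cdots\circ f_i)\bigr).
\]
So the two towers are genuinely different: in yours the ambient space of the step-Grassmannian is fixed (depending only on $M^2$) and the lower bound $W_k$ varies with the point, whereas in the paper's the lower bound $U_{i-1,j}$ has the fixed dimension $r(A)_{i-1,j}$ and the ambient $f_j^{-1}U_{i,j+1}\cap\mathrm{Im}(\cdots)$ varies. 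The paper does not verify constancy of that ambient dimension either---it simply records the Grassmannian and defers the dimension count to the corollary with a ``one can show''. Your identification of the constancy of $\dim W_k$ as the residual work is accurate, and the case analysis you outline (via the non-crossing condition on $A$ and the explicit form of $M^2$) is the right way to finish; the paper's route would require the analogous check on $\dim\bigl(f_j^{-1}U_{i,j+1}\cap\mathrm{Im}(f_{j-1}\circ\cdots\circ f_i)\bigr)$.
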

\begin{proof}
A point in $R_A$ is a collection of spaces $U_{i,j}$. Our first step is to define the space $U_{1,n}$. This is a subspace of the one-dimensional space spanned by 
$f_{n-1}\circ\ldots\circ f_1$. Since $\dim U_{1,n}$ is either one or zero (depending on $r(A)_{1,n}$), we have no choice when fixing $U_{1,n}$.
We define $R_A(N)={\rm Gr}(r(A)_{1,n},1)$.     

In general, the space $R_A(k)$ is defined as the set of collections $(U_{i,j})_{i,j\in L(k)}$, where the cardinality 
of $L(k)$ is $N+1-k$ and 
$(i,j)\in L(k)$ implies $(i-1,j),(i,j+1)\in L(k)$, $1\le i\le j\le n$; 
the properties \eqref{1} and \eqref{2} are assumed to be fulfilled provided all the pairs $(i,j)$
popping up belong to $L(k)$. The sets $L(k)$ satisfy 
\[
\{(i,j), 1\le i\le j\le n\}=L(1)\supset L(2)\supset\dots\supset L(N)=\{(1,n)\}.
\]
Now assume that $(i-1,j), (i,j+1)\in L(k)$ and $(i,j)\notin L(k)$. Then we define $L(k-1)=L(k)\cup \{(i,j)\}$. Then there is a natural map
$R_A(k-1)\to R_A(k)$ and the fiber of such a map is parametrized by the subspaces $U_{i,j}$ such that $U_{i-1,j}\subset U_{i,j}$ and
$f_j U_{i,j}\subset U_{i,j+1}$. Hence such $U_{i,j}$ are parametrized by the Grassmannian 
\[
{\rm Gr} (r(A)_{i,j}-r(A)_{i-1,j}, f_{j}^{-1} U_{i,j+1}\cap {\rm Im} (f_{j-1}\circ\ldots\circ f_i)).   
\] 
\end{proof}

\begin{corollary}
${\rm Fl}^{{\bf r}^2}(V)$ is equidimensional of dimension $n(n+1)/2$.
\end{corollary}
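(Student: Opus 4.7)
The plan is to deduce the corollary directly from the preceding lemma combined with the classification of irreducible components in Theorem~\ref{t3}. By Theorem~\ref{t3}, every irreducible component of ${\rm Fl}^{{\bf r}^2}(V)$ is of the form $\overline{S_A}$ for some non-crossing arc diagram $A$, so it suffices to prove that $\dim\overline{S_A}=n(n+1)/2$ for every such $A$. Since $R_A$ is smooth (being an iterated Grassmannian bundle by the lemma) and the projection $R_A\to\overline{S_A}$ sending $(U_{i,j})_{i,j}$ to $(U_{i,i})_{i=1}^n$ is proper and surjective, we have $\dim\overline{S_A}\le\dim R_A$; moreover, over the open stratum $S_A$ the intermediate subspaces $U_{i,j}=(f_{j-1}\circ\cdots\circ f_i)(U_i)$ are uniquely determined, so the morphism is birational and the inequality is an equality.

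Next I would compute $\dim R_A$ by summing the dimensions of the Grassmannian fibers along the tower $R_A(1)\to\cdots\to R_A(N)=\mathrm{pt}$. At the step adding the pair $(i,j)$, the fiber is a Grassmannian of $(r(A)_{i,j}-r(A)_{i-1,j})$-planes in a prescribed subspace of $V$, with ambient dimension governed by rank-nullity applied to $f_j$ restricted to $\mathrm{Im}(f_{j-1}\circ\cdots\circ f_i)$. A bookkeeping argument using the definition
\[
r(A)_{i,j}=i-|\{\text{arcs of $A$ starting in $[1,i]$ and ending in $[i+1,j]$}\}|
\]
together with the rank formulas $r^2_{i,j}=n-j+i$ for $M^2$ should collapse the telescoping sum of Grassmannian dimensions to $n(n+1)/2$. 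I expect this to be the main technical step, but it is essentially routine bookkeeping once the ambient dimensions in each fiber are written down explicitly.

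Alternatively, one can avoid the explicit sum by invoking the general lower bound of Section~\ref{Sec:QG}: every irreducible component of ${\rm Gr}_{\mathbf e}(M^2)$ has dimension at least $n(n+1)/2$, whence $\dim\overline{S_A}\ge n(n+1)/2$; combined with the upper bound $\dim\overline{S_A}\le\dim{\rm Fl}^{{\bf r}^2}(V)=n(n+1)/2$ from Theorem~\ref{t1}, equality follows for every component and equidimensionality is established. I would present the direct tower computation first (to make the corollary self-contained from the lemma) and then remark that the lower-bound route provides an independent verification.

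The principal obstacle, if any, is verifying that the map $R_A\to\overline{S_A}$ is indeed surjective onto all of $\overline{S_A}$ (not merely dominant), which requires checking that over points of the closure the requisite subspaces $U_{i,j}$ satisfying conditions (1) and (2) still exist; this follows by semicontinuity of the rank functions defining $S_A$ together with the explicit construction of $U_{i,j}$ as images of $U_i$ on the open stratum, extended by a Grassmannian limit argument on the boundary.
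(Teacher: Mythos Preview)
Your main approach---computing $\dim R_A$ by summing the Grassmannian fiber dimensions in the tower and using that $R_A\to\overline{S_A}$ is a birational proper surjection---is exactly the paper's route; the paper's proof simply asserts that $\dim R_A=n(n+1)/2$ for every non-crossing $A$ and concludes, so your write-up is in fact more detailed than the original.

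One remark on your alternative route: the subsection containing this corollary is explicitly framed as \emph{reproving} equidimensionality via the desingularizations $R_A$ (the paper says so in the opening sentence of the subsection). Invoking Theorem~\ref{t1} for the upper bound therefore defeats the purpose---it is not logically circular, since Theorem~\ref{t1} was established earlier by the stratification argument of Theorem~\ref{tc}, but it renders the lemma on $R_A$ superfluous for the corollary. If you want the corollary to stand on the lemma alone, you should carry out the telescoping sum you describe rather than fall back on Theorem~\ref{t1}.

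A smaller point: your ``principal obstacle'' paragraph about surjectivity onto all of $\overline{S_A}$ is unnecessary. For the dimension count you only need the map to be dominant (birational onto $S_A$ already gives $\dim R_A\ge\dim\overline{S_A}$, and properness plus irreducibility of the source then gives equality), so the boundary-extension argument can be dropped.
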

\begin{proof}
One can show that for any non-crossing arc diagram $A$ the dimension of $R(A)$ is equal to
$\dim R_A=n(n+1)/2$. This implies the claim.
\end{proof}

\subsection{Cell decompositions}\label{Sec:CellDec}
We retain notations of previous sections. Thus $Q$ denotes an equioriented quiver of type $A_n$ (for some fixed integer $n\geq 1$) and let $M:=(V_\bullet, f_\bullet)$ be a $Q$-representation: 
$$
\xymatrix{
M:&V_1\ar^{f_1}[r]&V_2\ar^{f_2}[r]&\ar[r]\cdots\ar^{f_{n-1}}[r]&V_n
}
$$
We denote by $d_i:=\textrm{dim}_\mathbb{C}(V_i)$. Let $\mathbf{e}=(e_1,e_2,\cdots, e_n)\in \mathbf{Z}_{\geq0}^n$ be a dimension vector. We consider the corresponding quiver Grassmannian
$$
{\Gr}_\mathbf{e}(M):=\{(U_1,U_2,\cdots, U_n)\in \prod_{i=1}^n {\Gr}_{e_i}(V_i)|\,f_i(U_i)\subseteq U_{i+1}\}.
$$ 
In this subsection we show that all varieties ${\rm Gr}_\mathbf{e}(M)$ 
(for every $Q$--representation $M$ and dimension vector $\mathbf{e}$) admit  cellular decompositions
such that the points of each cell are all isomorphic as representations of $Q$.
\par
Let us introduce our candidates for the cells. It is well-known that  there exists a basis 
$$
\mathcal{B}_i=\{v_1^{(i)}, v_2^{(i)},\cdots, v_{d_i}^{(i)}\}
$$ 
of $V_i$ (for all $i=1,\cdots, n$) such that
\begin{equation}\label{Reduct1}
f_i(v_k^{(i)})\textrm{ is either zero or a basis vector }v_{k'}^{(i+1)}\in \mathcal{B}_{i+1}.
\end{equation}
We call $\mathcal{B}$ the \emph{standard basis} of $M$.   We renumber the basis vectors in such a way that if $f_i(v_{k}^{(i)})\neq 0$ then it equals the standard basis vector $v_{k}^{(i+1)}$ (with the same index $k$) where $k\geq 1$ is a positive integer.  We say that a maximal collection of vectors $\{v_{k}^{(i)}\}_i$ such that $f_i(v_{k}^{(i)})=v_{k}^{(i+1)}$ 
form the $k$--th \emph{segment} of $M$. 
\par
Our second reduction is the following: for every index $i$, we renumber the basis vectors of $\mathcal{B}_i$ so that
\begin{equation}\label{Reduct2}
v_k^{(i)}\in \ker f_i\qquad\Rightarrow\qquad v_j^{(i)}\in \ker f_i \qquad \forall j>k
\end{equation}
for every choice of $i$ and $k$. Such a renumbering is always possible (see Remark~\ref{Rem:Degree}). This property is equivalent to the following 
\begin{equation}\label{Reduct3}
f_i(v_k^{(i)}+\sum_{j>k}a_jv_j^{(i)})\neq 0\qquad\Rightarrow\qquad f_i(v_k^{(i)})\neq 0
\end{equation}
for every choice of $i$, $k$ and  of coefficients $a_j\in \mathbb{C}$.
\par
We can now construct our candidate for the cells (i.e. affine spaces) of ${\Gr}_\mathbf{e}(M)$. Following \cite{Cer}, we assign a degree to each standard basis vector as follows:
\begin{equation}\label{Eq:Degree}
\textrm{deg}(v_k^{(i)})=k.
\end{equation}
With this choice the vectors of every segment are homogenous of the same degree.  The 1--dimensional torus $T=\mathbb{C}^*$ acts on the quiver Grassmannian ${\rm Gr}_\mathbf{e}(M)$ as follows: given $\lambda\in T$ and a basis vector $v\in \bigcup_{i=1}^n\mathcal{B}_i$ we put:
$$
\lambda\cdot v=\lambda^{\textrm{deg}(v)} v
$$
and we extend this action to $M$ by linearity. It is easy to see that the map $v\mapsto \lambda.v$ is an automorphism of the $Q$--representation $M$: indeed, given a vector $v=\sum_k a_kv_k^{(i)}\in V_i$ and $\lambda\in T$, 
$$
f_i(\lambda\cdot v)=\sum_k a_k\lambda^k f_i(v_k^{(i)})=\sum_{k:f_i(v_k^{(i)})\neq0}a_k\lambda^k v_k^{(i+1)}=\lambda\cdot (f_i(v))
$$
Since the group $\textrm{Aut}_Q(M)$ naturally acts on ${\Gr}_\mathbf{e}(M)$, it follows that  $T\subset \textrm{Aut}(M)$ acts on ${\Gr}_\mathbf{e}(M)$. The set ${\Gr}_\mathbf{e}(M)^T$ of $T$--fixed points is finite and consists of all sub--representations of $M$ of dimension vector $\mathbf{e}$ which are spanned by standard basis vectors. Given $L\in {\Gr}_\mathbf{e}(M)^T$ we consider its attracting set
$$
\mathcal{C}(L):=\{N\in {\Gr}_\mathbf{e}(M)|\,\lim_{\lambda\rightarrow 0}\lambda\cdot N=L\}.
$$
\begin{theorem}\label{Thm:CellDecomp}
For every $L\in {\Gr}_\mathbf{e}(M)^T$, the subset $\mathcal{C}(L)\subseteq {\Gr}_\mathbf{e}(M)$ is an affine space and the quiver Grassmannian admits the cellular decomposition
\begin{equation}\label{Eq:CellDecomp}
{\Gr}_\mathbf{e}(M)=\coprod_{L\in {\Gr}_\mathbf{e}(M)^T}\mathcal{C}(L).
\end{equation}
\end{theorem}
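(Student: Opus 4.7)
The strategy is a Bialynicki-Birula style argument applied to the $T$-action, with the caveat that ${\rm Gr}_\mathbf{e}(M)$ may be singular, so the standard theorem does not apply directly and affineness of the strata must be verified in explicit coordinates.

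First I would show that the limit $\lim_{\lambda\to 0}\lambda\cdot N$ exists in ${\rm Gr}_\mathbf{e}(M)$ for every point $N$. Since by \eqref{Eq:Degree} every standard basis vector is assigned a strictly positive degree, the induced $T$-action on each Pl\"ucker coordinate of each factor ${\rm Gr}_{e_i}(V_i)$ has non-negative weights; hence the $T$-orbit of any point in the ambient product $\prod_i{\rm Gr}_{e_i}(V_i)$ extends to a morphism from $\mathbb{A}^1$, and closedness of ${\rm Gr}_\mathbf{e}(M)$ in the product forces this extension to land in the quiver Grassmannian. The resulting limit point is automatically $T$-fixed, so the set-theoretic disjoint-union decomposition \eqref{Eq:CellDecomp} follows immediately.

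Next I would describe $\mathcal{C}(L)$ as the intersection of ${\rm Gr}_\mathbf{e}(M)$ with an ambient Schubert cell. Writing $L_i=\sspan\{v_k^{(i)}:k\in S_i\}$ for subsets $S_i\subseteq\{1,\dots,d_i\}$ of size $e_i$, the attracting set of $L$ in $\prod_i{\rm Gr}_{e_i}(V_i)$ is a product of Schubert cells, an affine space parametrized by scalars $a_{k,j}^{(i)}$ with $k\in S_i$, $j>k$, $j\notin S_i$, via the basis
\[
w_k^{(i)}=v_k^{(i)}+\sum_{j>k,\ j\notin S_i}a_{k,j}^{(i)}\,v_j^{(i)}.
\]
The stratum $\mathcal{C}(L)\subseteq{\rm Gr}_\mathbf{e}(M)$ is cut out of this cell by the subrepresentation conditions $f_i(U_i)\subseteq U_{i+1}$. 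The core step is to show that these conditions reduce to homogeneous linear equations in the $a_{k,j}^{(i)}$. Using \eqref{Reduct1}, expand
\[
f_i(w_k^{(i)})=f_i(v_k^{(i)})+\sum_{j>k,\ j\notin S_i}a_{k,j}^{(i)}\,f_i(v_j^{(i)}),
\]
where each $f_i(v_j^{(i)})$ is either $0$ or the distinct basis vector $v_j^{(i+1)}$. Requiring $f_i(w_k^{(i)})\in\sspan\{w_{k'}^{(i+1)}:k'\in S_{i+1}\}$ amounts to matching coefficients on the basis of $V_{i+1}$: the pivots at indices $k'\in S_{i+1}$ determine the expansion coefficients $b_{k'}$, and each remaining coefficient at an index $l\notin S_{i+1}$ must then equal $\sum_{k'}b_{k'}a_{k',l}^{(i+1)}$, producing a linear relation among the $a$'s. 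Property \eqref{Reduct2}/\eqref{Reduct3} ensures that whenever $f_i(v_k^{(i)})=0$ one also has $f_i(v_j^{(i)})=0$ for all $j>k$, so the corresponding conditions are vacuous and no product of two distinct $a$'s can arise. Since $L\in\mathcal{C}(L)$ corresponds to $a_{k,j}^{(i)}=0$, the system is homogeneous, so its solution set is a linear subspace of the Schubert cell, hence an affine space.

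The main obstacle I anticipate is the last step: bookkeeping the precise form of the linear equations and verifying that no nonlinear contribution arises from the change of basis between the $w_{k'}^{(i+1)}$ and the $v_l^{(i+1)}$ on the target side. The ordering conventions \eqref{Reduct2}/\eqref{Reduct3} are crucial here, since it is exactly their compatibility with the grading that makes the relevant transition matrices triangular, prevents the appearance of spurious nonlinear terms when one inverts a pivot block depending on the $a$'s, and guarantees that the resulting system of equations is consistent and homogeneous.
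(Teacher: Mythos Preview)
Your proposal is correct and follows essentially the same route as the paper: intersect ${\rm Gr}_\mathbf{e}(M)$ with the product of ambient Schubert cells $\prod_i\mathcal{C}(L_i)$, use the explicit coordinates $a^{(i)}_{k,j}$, and check that the subrepresentation conditions become linear. Your worry in the last paragraph dissolves once you note (as the paper does) that \eqref{Reduct3} forces the leading term of $f_i(w_k^{(i)})$, when nonzero, to be $v_k^{(i+1)}$, which is a pivot of $L_{i+1}$; hence the expansion coefficients $b_{k'}$ you introduce are all $0$ except for a single $1$, so the equations are simply $a^{(i+1)}_{k,j}=a^{(i)}_{k,j}$ whenever $f_i(v_j^{(i)})\neq 0$, and no nonlinear term can appear.
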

\begin{proof}
The torus $T$ acts on each Grassmannian ${\Gr}_{e_i}(V_i)$ as \eqref{Eq:Degree} and induces a cell decomposition 
$$
{\Gr}_{e_i}(V_i)=\coprod_{L_i\in \Gr_{e_i}(V_i)^T}\mathcal{C}(L_i)\ \ 
\text{where}\ \ 
\mathcal{C}(L_i):=\{N_i\in {\rm Gr}_{e_i}(V_i)|\,\lim_{\lambda\rightarrow 0}\lambda\cdot N_i=L_i\}.
$$
Let $L\in {\Gr}_\mathbf{e}(M)^T$ and let us denote by $L_i$ the corresponding subspace of $V_i$ (for every vertex $i=1,\cdots, n$). Then, since the embedding ${\rm Gr}_\mathbf{e}(M)\subseteq \prod_{i=1}^n {\Gr}_{e_i}(V_i)$ is $T$--equivariant, 
\begin{equation}\label{Eq:Inters}
\mathcal{C}(L)={\rm Gr}_\mathbf{e}(M)\cap \prod_{i=1}^n \mathcal{C}(L_i).
\end{equation}
In order to finish the proof it remains to show that this intersection is an affine space.

It is easy to describe the affine space $\mathcal{C}(L_i)$: suppose that  $L_i$ is spanned by $\{v_{k_1}^{(i)},v_{k_2}^{(i)},\cdots, v_{k_{e_i}}^{(i)}\}$ for some set of  indices $K_i:=\{ k_1<k_2<\cdots<k_{e_i}\}$, then a point $N_i\in \mathcal{C}(L_i)$ is spanned by vectors $\{w_1^{(i)},\cdots, w_{e_i}^{(i)}\}$ of the form 
\begin{equation}\label{Eq:SpanningVectorsN}
w_s^{(i)}=v_{k_s}^{(i)}+\sum_{j>k_s,\, j\notin K_i} a_{j,s}^{(i)} v_j^{(i)}
\end{equation}
for some coefficients $a_{j,s}^{(i)}\in\mathbb{C}$. We claim that in the coordinates  $\{a_{j,s}^{(i)}\}$, the intersection \eqref{Eq:Inters} is described by the following equations:
\begin{equation}\label{EquationsCells}
a_{j,s}^{(i+1)}=a_{j,s}^{(i)}\quad\textrm{whenever }f_i(v_j^{(i)})\neq 0
\end{equation}
In particular, the claim shows that $\mathcal{C}(L)$ is a cell. The proof of the claim is straightforward: let us take a point $\{a_{j,s}^{(i)}\}\in\prod\mathcal{C}(L_i)$ which defines a collection $N=\{N_i\}_{i=1}^n$ of subspaces, each one spanned by vectors \eqref{Eq:SpanningVectorsN}. This point $N$ belongs to ${\Gr}_\mathbf{e}(M)$ if and only if $f_i(N_i)\subseteq N_{i+1}$ for $i=1,2,\cdots, n-1$. This means that $f_i(w_s^{(i)})$ must be in the span of  $\{w_{1}^{(i+1)},\cdots, w_{e_{i+1}}^{(i+1)}\}$. In view of \eqref{Reduct3}, if $f_i(w_s^{(i)})\neq 0$ then it equals 
$$
f_i(w_s^{(i)})=v_{k_s}^{(i+1)}+\sum_{j>k_s,\,j\notin K_i} a_{j,s}^{(i)}\, f_i(v_j^{(i)})=
v_{k_s}^{(i+1)}+\sum_{j>k_s,\,j\notin K_i,\, f_i(v_j^{(i)})\neq0} a_{j,s}^{(i)}\, v_j^{(i+1)}.
$$
(Notice that $v_{k_s}^{(i+1)}\in L_{i+1}$ since $L$ is a sub-representation of $M$.) This vector is in the span of $\{w_{1}^{(i+1)},\cdots, w_{e_{i+1}}^{(i+1)}\}$ if and only if it equals
$$
w_s^{(i+1)}=v_{k_s}^{(i+1)}+\sum_{j>k_s\,j\notin K_{i+1}} a_{j,s}^{(i+1)}\, v_j^{(i+1)}
$$
and this forces \eqref{EquationsCells}.
\end{proof}
\begin{remark}
A different numbering  than \eqref{Reduct2} does not produce cells. For example, let $V_1=\sspan\{v_1^{(1)}, v_2^{(1)}\}$, $V_2=\sspan\{v_2^{(2)}, v_3^{(2)}\}$ and $f_1(v_1^{(1)})=0$, $f_1(v_2^{(1)})=v_2^{(2)}$. 
\par
Then we see that condition \eqref{Reduct2} is not satisfied (since $f_1(v_1^{(1)})=0$ but $f_1(v_2^{(1)})\neq 0$). Let $\mathbf{e}=(1,1)$. The attracting set of the T--fixed point 
$L=\sspan\{v_1^{(1)}, v_2^{(2)}\}$ is given by
$$
\mathcal{C}(L)=\{(v_1^{(1)}+xv_2^{(1)},v_2^{(2)}+yv_3^{(2)})|\, xy=0\}
$$ 
which is not a cell. 
\end{remark}
Recall the stratification of ${\Gr}_\mathbf{e}(M)$ as union of locally closed subsets $\mathcal{S}(N)$ consisting of points $U\in {\Gr}_\mathbf{e}(M)$ isomorphic to $N$. 
\begin{corollary}\label{Cor:CellDecSN}
The cellular decomposition  \eqref{Eq:CellDecomp} induces a cellular decomposition 
\begin{equation}\label{Eq:CellDecompSN}
\mathcal{S}(N)=\coprod_{L\in {\Gr}_\mathbf{e}(M)^T:L\simeq N} \mathcal{C}(L).
\end{equation} 
In other words all the points of a cell $\mathcal{C}(L)$ are isomorphic to $L$ as $Q$--representations. 
\end{corollary}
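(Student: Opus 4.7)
The plan is to prove the stronger statement that every subrepresentation $N\in\mathcal{C}(L)$ is isomorphic to $L$ as a $Q$-representation. Once this is shown, the decomposition \eqref{Eq:CellDecompSN} is immediate: since $\mathcal{S}(N)$ is by definition the set of all points in $\Gr_\mathbf{e}(M)$ isomorphic to $N$, and since the cells $\mathcal{C}(L)$ partition $\Gr_\mathbf{e}(M)$ by Theorem~\ref{Thm:CellDecomp}, $\mathcal{S}(N)$ must be the union of those cells $\mathcal{C}(L)$ for which $L\simeq N$.

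To exhibit an isomorphism $\varphi\colon L\xrightarrow{\sim} N$, I would use the explicit description of $\mathcal{C}(L)$ extracted from the proof of Theorem~\ref{Thm:CellDecomp}. Writing $K_i=\{k_1^{(i)}<\cdots<k_{e_i}^{(i)}\}$ for the index set determining $L$ at vertex $i$, the subspace $L_i$ is spanned by $\{v_{k_s^{(i)}}^{(i)}\}_s$ and a general point $N\in\mathcal{C}(L)$ has basis $\{w_s^{(i)}\}_s$ with $w_s^{(i)}=v_{k_s^{(i)}}^{(i)}+\sum_{j>k_s^{(i)},\,j\notin K_i}a_{j,s}^{(i)}v_j^{(i)}$. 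Define $\varphi_i\colon L_i\to N_i$ as the linear isomorphism sending $v_{k_s^{(i)}}^{(i)}$ to $w_s^{(i)}$; this is tautologically bijective.

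The heart of the proof is checking that $\varphi=(\varphi_i)_i$ commutes with each arrow $f_i$. It suffices to verify $f_i(\varphi_i(v_{k_s^{(i)}}^{(i)}))=\varphi_{i+1}(f_i(v_{k_s^{(i)}}^{(i)}))$ on standard basis vectors, splitting into two cases according to whether $f_i(v_{k_s^{(i)}}^{(i)})=0$ or equals $v_{k_s^{(i)}}^{(i+1)}$. In the first case, property \eqref{Reduct2} forces every $v_j^{(i)}$ with $j>k_s^{(i)}$ into $\ker f_i$, so $f_i(w_s^{(i)})=0$ and both sides vanish. In the second case, since $L$ is a subrepresentation, $v_{k_s^{(i)}}^{(i+1)}\in L_{i+1}$, so $k_s^{(i)}=k_{s'}^{(i+1)}$ for some $s'$ and $\varphi_{i+1}$ sends it to $w_{s'}^{(i+1)}$; on the other hand, expanding $f_i(w_s^{(i)})$ using \eqref{Reduct1}, and applying the cell-defining equations \eqref{EquationsCells} (which identify $a_{j,s}^{(i)}$ with $a_{j,s'}^{(i+1)}$ precisely when $f_i(v_j^{(i)})\neq 0$) shows $f_i(w_s^{(i)})=w_{s'}^{(i+1)}$.

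The principal obstacle is bookkeeping in the second case: confirming that the index re-labeling $s\mapsto s'$ induced by the embedding $K_i\cap f_i^{-1}(\text{non-zero})\hookrightarrow K_{i+1}$ matches exactly the parameters $a_{j,s}^{(i)}$ and $a_{j,s'}^{(i+1)}$ paired by \eqref{EquationsCells}. This is not a serious difficulty, but it is the point at which the numbering conventions \eqref{Reduct1}--\eqref{Reduct3} are used in an essential way, and it explains \emph{why} the equations cutting out the cell $\mathcal{C}(L)$ are literally the same equations that force $\varphi$ to be a morphism of $Q$-representations.
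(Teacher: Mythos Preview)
Your argument is correct and in fact slightly more direct than the paper's own proof. Both approaches rest on the same observation, extracted from the proof of Theorem~\ref{Thm:CellDecomp}: for $N\in\mathcal{C}(L)$ the map $f_i$ sends each generator $w_s^{(i)}$ either to $0$ or to $w_{s'}^{(i+1)}$, exactly mirroring how $f_i$ acts on the coordinate basis $\{v_{k_s}^{(i)}\}$ of $L$. The paper draws from this that the rank tuple $(r_{i,j}(N))_{i<j}$ coincides with that of $L$, and then invokes the fact that for the equioriented type~$A$ quiver the rank tuple determines the isomorphism class. You instead package the same observation as the statement that $\varphi_i\colon v_{k_s}^{(i)}\mapsto w_s^{(i)}$ intertwines the $f_i$, producing the isomorphism explicitly without appealing to the rank classification. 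Your route is marginally more self-contained; the paper's is marginally shorter because it outsources the final step. The bookkeeping you flag in the second case (matching $s$ with $s'$ via the inclusion $K_i\cap f_i^{-1}(\text{nonzero})\hookrightarrow K_{i+1}$) is indeed the only place requiring care, and it is handled by exactly the same computation the paper carries out when deriving \eqref{EquationsCells}.
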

\begin{proof}
We need to prove that each point $U$ of ${\Gr}_\mathbf{e}(M)$ is attracted by a torus fixed point which is isomorphic to it. By the explicit description of the cell $\mathcal{C}(L)$ given in the proof of the theorem, it follows that the ranks of the maps induced on each point of  $\mathcal{C}(L)$ are precisely the ranks of the same maps induced on $L$. Since isomorphism classes of $Q$--representations are parametrized by such ranks, this concludes the proof. 
\end{proof}
\begin{corollary}\label{Cor:SubTypeA}
The possible sub-representation types of M are given by torus fixed points. 
\end{corollary}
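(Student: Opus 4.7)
The plan is to deduce this corollary directly from Corollary \ref{Cor:CellDecSN}, since all the real work has already been done in constructing the cellular decomposition \eqref{Eq:CellDecomp} and verifying that each cell consists of subrepresentations of a single isomorphism type.

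I would argue as follows. Take any subrepresentation $N \subseteq M$, and let $\mathbf{e} = \mathbf{dim}\,N$. Then $N$ is a point of ${\Gr}_{\mathbf{e}}(M)$, so the stratum $\mathcal{S}(N) \subseteq {\Gr}_{\mathbf{e}}(M)$ is nonempty. Applying Corollary \ref{Cor:CellDecSN} to this dimension vector, we obtain
\[
\mathcal{S}(N) \;=\; \coprod_{L \in {\Gr}_{\mathbf{e}}(M)^T,\; L \simeq N} \mathcal{C}(L),
\]
and since the left-hand side is nonempty the index set on the right must be nonempty as well. Hence there exists at least one torus fixed point $L \in {\Gr}_{\mathbf{e}}(M)^T$ with $L \simeq N$ as $Q$-representations. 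Varying $\mathbf{e}$ over all dimension vectors $\le \mathbf{dim}\,M$, every isomorphism class of subrepresentation of $M$ is realized by some torus fixed point.

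The converse inclusion is tautological: every torus fixed point is, by construction, a subrepresentation of $M$ spanned by standard basis vectors, so its isomorphism class is among the subrepresentation types of $M$. Combining the two directions yields the claimed bijection between subrepresentation isomorphism types of $M$ and isomorphism classes appearing among the torus fixed points in the various ${\Gr}_{\mathbf{e}}(M)$. There is essentially no obstacle to overcome here; the content lies entirely in the cellular decomposition established above, and the corollary is a formal extraction of that content.
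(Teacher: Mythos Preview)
Your argument is correct and matches the paper's approach: the paper states Corollary~\ref{Cor:SubTypeA} immediately after Corollary~\ref{Cor:CellDecSN} without a separate proof, treating it as the evident consequence you have spelled out. Your deduction---that nonemptiness of $\mathcal{S}(N)$ forces the existence of a torus fixed point $L\simeq N$ via the decomposition \eqref{Eq:CellDecompSN}---is exactly the intended reading.
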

Notice that Corollary~\ref{Cor:SubTypeA} is not true for Dynkin quivers of type $D_n$ (see \cite[Example~4.3]{CFR1}).
\begin{remark}
It is worth noting that the dimension of the tangent space at ${\Gr}_\mathbf{e}(M)$ is not constant along each cell, in general (see Example~\ref{Example1}). So, it can happen that the center of a cell (i.e. its $T$--fixed point) is singular while the cell contains smooth points of ${\Gr}_\mathbf{e}(M)$.
\end{remark}
\begin{remark}
Equation~\eqref{EquationsCells} provides a  formula to compute the dimension of any given attracting cell $\mathcal{C}(L)$ for $L\in\Gr_\mathbf{e}(M)^T$.  
\end{remark}
\begin{remark}\label{Rem:Degree}
To each indecomposable Q--representation $U_{ij}$ ($1\leq i\leq j\leq n$) we assign the degree
\begin{equation}\label{Eq:DegreeIndec}
\deg U_{ij}:=j-i+1+{n+1\choose 2}-{j+1\choose 2}.
\end{equation}
It satisfies the following recursive relations: 
$$
\deg U_{ij}=\left\{
\begin{array}{lc}
\deg U_{i-1,j}-1&\textrm{ if }1<i\leq j\\
\deg U_{j-1,j-1}-1&\textrm{ if }i=1<j
\end{array}
\right.
$$
In particular, 
\begin{equation}\label{Eq:GradingProp}
\deg U_{ij}\geq\deg U_{rs}\Rightarrow j\leq s.
\end{equation}

This provides a total ordering on the set of indecomposable $Q$--representations.
\par
Given a $Q$--representation $M$, we order its indecomposable direct summands $M=\bigoplus_{i=1}^N M(i)$ so that
$$
i<j \Rightarrow \deg M(i)\leq \deg M(j).
$$ 
This ordering induces a  grading of the standard basis vectors of $M$ which satisfies \eqref{Reduct2}. Indeed, by assumption, the $k$--th segment is the span of $\{v_k^{(i)}\}_i$ and it is isomorphic to $M(k)$; every standard basis vector of such segment has degree $k$. If $v_k^{(i)}$ is defined and $f_i(v_k^{(i)})=0$, then the k--th segment of M is isomorphic to $U_{ri}$ for some $r\leq i$ and if $j>k$ then the $j$--th segment is $U_{st}$ with $\deg U_{ri}\leq \deg U_{st}$ and in view of \eqref{Eq:GradingProp}, $t\leq i$. In case $t=i$, this forces $f_i(v_j^{(i)})=0$ as desired. 
\end{remark}

We conclude this subsection with examples. They all concern the mf--linear degenerate flag variety ${\rm Fl}^{{\bf r}^2}(V)$, that we denote by $\mathcal{G}_n$ for simplicity. Recall that by definition ${\rm Fl}^{{\bf r}^2}(V)$ is the quiver Grassmannian ${\Gr}_\mathbf{e}(M^{(2)})$ where $\mathbf{e}=(1,2,\cdots, n)$ and  $M\simeq M^2$ is the representation of $Q$ given by
$M:=\bigoplus_{i=1}^n P_i\oplus\bigoplus_{j=1}^{n-1} I_j\oplus\bigoplus_{k=1}^nS_k.$
We order the indecomposable direct summands of $M$ as explained in Remark~\ref{Rem:Degree}.
Thus, the strings of $M$ (for $n=4$) are ordered as follows, from top to bottom:
$$
\xymatrix@R=0pt@C=12pt{
&&&\cdot&1\\
&&&\cdot&2\\
&&\cdot\ar[r]&\cdot&3\\
&\cdot\ar[r]&\cdot\ar[r]&\cdot&4\\
\cdot\ar[r]&\cdot\ar[r]&\cdot\ar[r]&\cdot&5\\
&&\cdot&&6\\
\cdot\ar[r]&\cdot\ar[r]&\cdot&&7\\
&\cdot&&&8\\
\cdot\ar[r]&\cdot&&&9\\
\cdot&&&&10\\
\cdot&&&&11}
$$

As shown in Theorem~\ref{t3} the irreducible components of $\mathcal{G}_n$ are labeled with non--crossing partitions on $n$ vertices. According to Corollary~\ref{Cor:CellDecSN}, each stratum $\mathcal{S}(N)$ of the mf-degenerate flag variety is divided into cells, parametrized by coordinate sub-representations of $M$ isomorphic to $N$. It is straightforward to compute the dimension of each cell in examples: consider the coefficient quiver of $M$, and arrange its strings as above. Given a $T$--fixed point $L$ of $\mathcal{S}(N)$, color black the vertices of the coefficient quiver of $M$ corresponding to its basis vectors and color white the remaining vertices. The dimension of the cell $\mathcal{C}(L)$ is given by counting the number of white vertices below each black \emph{source} (i.e. a source of the segments defining $L$). 

\begin{example}\label{Example1}
Let us consider the following $T$--fixed point $L$ of $\mathcal{G}_4$
$$
\xymatrix@R=0pt@C=12pt{
&&&\bullet&1\\
&&&\cdot&2\\
&&\bullet\ar[r]&\bullet&3\\
&\bullet\ar[r]&\bullet\ar[r]&\bullet&4\\
\cdot\ar[r]&\bullet\ar[r]&\bullet\ar[r]&\bullet&5\\
&&\cdot&&6\\
\cdot\ar[r]&\cdot\ar[r]&\cdot&&7\\
&\cdot&&&8\\
\cdot\ar[r]&\cdot&&&9\\
\bullet&&&&10\\
\cdot&&&&11}
$$
Its attracting cell $\mathcal{C}(L)$ has dimension 10 which is also the dimension of the whole variety $\mathcal{G}_4$. In view of Corollary~\ref{Cor:CellDecSN},  it follows that the  stratum $\mathcal{S}(L)$ is generic (its closure is an irreducible component of $\mathcal{G}_4$) and indeed it is indexed by the non--crossing partition $\{(1,2)\}$. This cell is interesting, because its center is non--smooth (the tangent space of $\mathcal{G}_4$ at $L$ has dimension 11) but the cell contains smooth points (since $\mathcal{G}_n$ is generically reduced for every $n$). 
\end{example}
\subsection{Normal flat locus}
We have shown above that the flat locus consists of those $Q$--representations $M$ such that $M\leq_{\textrm{deg}} M^2$. Inside the flat locus, the irreducible flat locus consists of  those $M$ such that $M\Leq M^1$. Theorem~\ref{t5} shows that $M$ lies in the flat locus but not in the flat irreducible locus if and only if there exists an index $i$ such that $M(\mathbf{a^i})\Leq M\Leq M^2$. 
Figure~\ref{Fig} summarizes the situation.
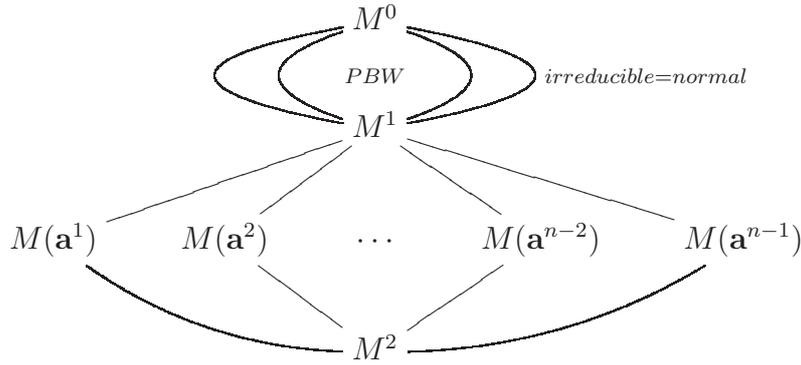
\begin{figure}\
$$
\xymatrix{
&&M^0&&\\
&&M^1\ar@/^5pc/@{-}[u]\ar@/^3pc/@{-}[u]\ar@/_5pc/@{-}_{irreducible=normal}[u]\ar@/_3pc/@{-}[u]\ar|{PBW}@{}[u]&&\\
M(\mathbf{a}^1)\ar@{-}[urr]&M(\mathbf{a}^2)\ar@{-}[ur]&\cdots&M(\mathbf{a}^{n-2})\ar@{-}[ul]&M(\mathbf{a}^{n-1})\ar@{-}[ull]\\
&&M^2\ar@/^1pc/@{-}[ull]\ar@/_1pc/@{-}[urr]\ar@{-}[ul]\ar@{-}[ur]&&
}
$$
\caption{Flat locus}
\label{Fig}
\end{figure}
The next result shows that the fiber over a point lying in the flat locus is normal if and only if  the point lies in the irreducible flat locus. In other words a flat linear degeneration of the complete flag variety is normal if and only if it is irreducible. 

\begin{theorem}\label{normallocus}
For every $i$, the quiver Grassmannian $X:={\Gr}_{(1,2,\cdots, n)}(M(\mathbf{a}^i))$  has singularities in codimension~1 and hence it is not normal. A flat linear degeneration of the complete flag variety is normal if and only if it is irreducible. 
\end{theorem}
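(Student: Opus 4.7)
My plan is to prove that $X := \Gr_{\mathbf{e}}(M(\mathbf{a}^i))$ has a codimension-one singular locus by exploiting the reducibility of $X$, then extend to arbitrary fibers in $U_{\rm flat}$ using openness of the normal locus (Theorem~\ref{ag}(2)) and $G$-equivariance. Since $M(\mathbf{a}^i)$ has isomorphisms at all arrows except $f_i$ (which has rank $n-1$), identifying $V_j \cong V$ for $j \le i$ and $V_j \cong V'$ for $j \ge i+1$ via these isomorphisms reduces the representation to a single map $f := f_i \colon V \to V'$ with $K := \ker f$ two-dimensional. A point of $X$ becomes a pair of flags $U_1 \subset \cdots \subset U_i$ in $V$ and $U_{i+1} \subset \cdots \subset U_n$ in $V'$ satisfying $f(U_i) \subset U_{i+1}$. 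From the proof of Theorem~\ref{t2}(2) together with a direct check via Theorem~\ref{tc} excluding any other full-dimensional generic stratum, $X$ has exactly two irreducible components of dimension $n(n+1)/2$: $X_1 := \overline{\mathcal{S}_{[A]}}$ (generic condition $U_i \cap K = 0$) and $X_2 := \overline{\mathcal{S}_{[N_2]}}$ (generic condition $\dim(U_i \cap K) = 1$), where $N_2 = \bigoplus_{j \ne i} P_j \oplus S_i \oplus P_{i+1}$.

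The central computation is $\dim(X_1 \cap X_2) = n(n+1)/2 - 1$. On $\mathcal{S}_{[A]}$, $U_{i+1}$ contains $f(U_i) \subset f(V)$ of dimension $i$, so $\dim(U_{i+1} \cap f(V)) \ge i$; this inequality persists on $X_1$ by upper semi-continuity of the intersection dimension. On $\mathcal{S}_{[N_2]}$ the analogous intersection has generic dimension $i-1$, so ``$\dim(U_{i+1} \cap f(V)) \ge i$'' is a proper codimension-one Schubert condition on $U_{i+1} \in \Gr(i+1, V')$. Conversely, given a point $(U_*^{(0)}) \in \mathcal{S}_{[N_2]}$ satisfying this Schubert condition, one constructs a one-parameter deformation $U_i^{(t)} := \langle u_0 + t u_1 \rangle \oplus U_{i-1}^{(0)}$, where $u_0$ spans $U_i^{(0)} \cap K$ and $u_1 \notin K$ is chosen so that $f(u_1) \in U_{i+1}^{(0)} \cap f(V) \setminus f(U_{i-1}^{(0)})$ (possible by the Schubert condition); then $f(U_i^{(t)}) = \langle f(u_1) \rangle + f(U_{i-1}^{(0)}) \subset U_{i+1}^{(0)}$ for $t \ne 0$, and $(U_*^{(t)})$ lies in $\mathcal{S}_{[A]}$ for $t \ne 0$ while degenerating to $(U_*^{(0)})$ at $t = 0$. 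Hence $X_1 \cap \mathcal{S}_{[N_2]}$ equals the codimension-one Schubert locus inside $\mathcal{S}_{[N_2]}$, and combined with the upper bound from $X_1, X_2$ being distinct equidimensional components we obtain $\dim(X_1 \cap X_2) = n(n+1)/2 - 1$.

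Every point of $X_1 \cap X_2$ lies on two components of $X$ and is therefore singular, so the singular locus has codimension one and $X$ fails Serre's $R_1$. Since $X$ is a reduced locally complete intersection by the previous subsection, it is Cohen--Macaulay and satisfies $S_2$; Serre's normality criterion then forces non-normality. For the second assertion, any $M \in U_{\rm flat} \setminus U_{\rm flat, irr}$ lies in $\overline{\mathcal{O}_{M(\mathbf{a}^i)}}$ for some $i$ by Theorem~\ref{t5}(1), and the non-normal locus is $G$-invariant and closed in $R$ by Theorem~\ref{ag}(2), hence contains this orbit closure. Conversely, for $M \in U_{\rm flat, irr}$ the fiber over $\mathcal{O}_{\mathbf{r}^1}$ is the normal PBW-degenerate flag variety $(\SL_{n+1}/B)^a$, and the $G$-invariant open normal locus contains $\mathcal{O}_{\mathbf{r}^1} \subset \overline{\mathcal{O}_M}$; by density of $\mathcal{O}_M$ in the irreducible $\overline{\mathcal{O}_M}$ it must contain $\mathcal{O}_M$ as well.

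The main obstacle is the one-parameter family argument in the second paragraph: it must simultaneously sweep out the entire codimension-one Schubert locus inside $\mathcal{S}_{[N_2]}$ via sufficient freedom in choosing $u_1$, and, combined with Theorem~\ref{tc}, rule out any third generic stratum that could yield an additional full-dimensional irreducible component.
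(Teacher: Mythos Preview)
Your proof is correct and takes a genuinely different route from the paper's. The paper relies on the torus cell decomposition developed in Section~\ref{Sec:CellDec}: it writes down explicit torus fixed points $P\simeq A$, $R\simeq N_2$, and $Q\simeq N_2$ in $X$, checks via the combinatorial cell-dimension formula~\eqref{EquationsCells} that $\dim\mathcal{C}(P)=\dim\mathcal{C}(R)=d$ and $\dim\mathcal{C}(Q)=d-1$, and then argues that $Q$ lies in both component closures $\overline{\mathcal{C}(P)}$ and $\overline{\mathcal{C}(R)}$ via one-step basis-vector mutations corresponding to $T$-invariant $\mathbb{P}^1$'s. You instead bypass the cell machinery entirely: after reducing $M(\mathbf{a}^i)$ to a single corank-$2$ map $f$, you identify the two components as $X_1=\overline{\mathcal{S}_{[A]}}$ and $X_2=\overline{\mathcal{S}_{[N_2]}}$ via Theorem~\ref{tc}, and then locate a codimension-one piece of $X_1\cap X_2$ as the Schubert locus $\{\dim(U_{i+1}\cap f(V))\ge i\}$ inside $\mathcal{S}_{[N_2]}$, proving the containment in $X_1$ by a hands-on one-parameter deformation. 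Your argument is more geometric and self-contained; the paper's is more combinatorial and produces an explicit cell in the intersection.

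Two small points worth tightening. First, you should say explicitly why the Schubert locus inside $\mathcal{S}_{[N_2]}$ is nonempty and has codimension exactly one: the Schubert divisor in $\Gr_{i+1}(V')$ is cut out by a single Pl\"ucker coordinate, so by Krull's principal ideal theorem its pullback to the irreducible variety $\mathcal{S}_{[N_2]}$ is either all of $\mathcal{S}_{[N_2]}$ (impossible, since then $X_2\subset X_1$) or pure of codimension one; nonemptiness is an easy direct construction (choose $U_{i+1}$ containing an $i$-dimensional subspace of $f(V)$ extending $f(U_{i-1})$). Second, neither your argument nor the paper's actually needs \emph{exactly} two components---two distinct components with a codimension-one intersection already forces codimension-one singularities. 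In fact there is an even shorter route once you know (as you do) that $X$ is connected, Cohen--Macaulay, and reducible: Hartshorne's connectedness theorem then forces any two components meeting to do so in codimension one, and connectedness guarantees such a meeting.
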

\begin{proof}
Let $\mathbf{a}^i=(0,\cdots, 0, 2,0,\cdots,0)$, with $2$ in the $i$--th position. The corresponding representation  $M(\mathbf{a}^i)$ is the $Q$--representation
$$
\xymatrix@R=5pt{
M(\mathbf{a}^i):=&\mathbb{C}^{n+1}\ar^1[r]&\cdots\ar^1[r]&\mathbb{C}^{n+1}\ar^{\textrm{pr}_{i,i+1}}[r]&\mathbb{C}^{n+1}\ar^1[r]&\cdots\ar^1[r]&\mathbb{C}^{n+1}
}
$$
where the map $\textrm{pr}_{i,i+1}$ is between vertex $i$ and vertex $i+1$. Its decomposition is $M(\mathbf{a}^i):=P_1^{\oplus(n+1-2)}\oplus P_{i+1}^{\oplus 2}\oplus I_{i}^{\oplus 2}$. Since $M(\mathbf{a}^i)$ degenerates to $M^{2}$, we know that $X$ is Cohen--Macaulay. Let us show that it has singularities in codimension~1. Recall that the irreducible components of $X$ have all the same dimension $d:=n(n+1)/2$ and they are closures of attracting sets of some torus fixed points. We show that there exist two irreducible components $I_1=\overline{\mathcal{C}(P)}$ and $I_2=\overline{\mathcal{C}(R)}$ (for suitable subrepresentations $P$ and $R$ of $M(\mathbf{a}^2)$) such that the intersection $I_1\cap I_2$ contains a cell $\mathcal{C}(L)$ of dimension $d-1$. 

We order the indecomposable direct summands $M_1,M_2,\cdots, M_{n+3}$ of $M(\mathbf{a}^i)$ as $M_1=M_2=P_{i+1}$, $M_3=\cdots=M_{n+1}=P_1$, $M_{n+2}=M_{n+3}=I_{i}$. This ordering induces an ordering of the standard basis of $M(\mathbf{a}^i)$ (see Section~\ref{Sec:CellDec}):  $\{v_{k}^{(\ell)}|\, k\in[1,n+3],\, \ell\textrm{ is in the support of }M_k\}$. The one--dimensional torus $T\simeq \mathbb{C}^\ast$ embeds into $\textrm{Aut}_Q(M(\mathbf{a}^i))$ by rescaling this basis elements: $\lambda\cdot v_k^{(\ell)}:=\lambda^kv_k^{(\ell)}$. This induces an action of $T$ on $X$. The $T$--fixed points are precisely the coordinate subrepresentations of $M(\mathbf{a}^i)$ of the prescribed dimension vector. Moreover, as shown in Section~\ref{Sec:CellDec} the attracting sets of $T$--fixed points are cells. 
\begin{example}\label{Ex:Ma2}
For n=4, $M(\mathbf{a}^2)$ is given by
$$
\xymatrix@R=0pt@C=12pt{
&&v_1^{(3)}\ar[r]&v_1^{(4)}&1\\
&&v_2^{(3)}\ar[r]&v_2^{(4)}&2\\
v_3^{(1)}\ar[r]&v_3^{(2)}\ar[r]&v_3^{(3)}\ar[r]&v_3^{(4)}&3\\
v_4^{(1)}\ar[r]&v_4^{(2)}\ar[r]&v_4^{(3)}\ar[r]&v_4^{(4)}&4\\
v_5^{(1)}\ar[r]&v_5^{(2)}\ar[r]&v_5^{(3)}\ar[r]&v_5^{(4)}&5\\
v_6^{(1)}\ar[r]&v_6^{(2)}&&&6\\
v_7^{(1)}\ar[r]&v_7^{(2)}&&&7
}
$$
In this figure, the indecomposable direct summands of $M(\mathbf{a}^2)$ are displayed as segments (see Section~\ref{Sec:CellDec}) and they are numbered from top to bottom according to the enumeration shown on the right. 
\end{example}

We are now ready to define the torus fixed points $P$ and $R$ of $X$. Recall that a torus fixed point is a coordinate subrepresentations of $M(\mathbf{a}^i)$ in the basis $\{v_k^{(\ell)}\}$. Such a representation is given by a collection of sub--segments of the segments forming $M(\mathbf{a}^i)$ and it is uniquely determined by its generators, i.e. the sources of such sub--segments. Let $P\in X$ be the sub-representation of $M(\mathbf{a}^i)$ generated by 
$$
P=\langle v_3^{(1)}, v_4^{(2)},\cdots, v_{i+2}^{(i)}, v_1^{(i+1)}, v_2^{(i+2)}, v_{i+3}^{(i+3)}, \cdots, v_{n}^{(n)}\rangle_{\mathbb{C}Q}
$$
We define $R$ to be the sub-representation generated by
$$
R=\langle v_3^{(1)}, v_4^{(2)},\cdots, v_{i+1}^{(i-1)}, v_{n+2}^{(i)}, v_1^{(i+1)}, v_2^{(i+1)}, v_{i+2}^{(i+2)}\cdots, v_{n}^{(n)} \rangle_{\mathbb{C}Q}
$$
\begin{example}\label{Ex:Irr}
In our running example~\ref{Ex:Ma2} (n=4, i=2), the T--fixed points $P$ and $R$ are given by
$$
\begin{tabular}{|l|r|}
\hline
\xymatrix@R=3pt@C=12pt{
&&&\\
P&&*+[F]{v_1^{(3)}}\ar[r]&*+[F]{v_1^{(4)}}\\
&&v_2^{(3)}\ar[r]&*+[F]{v_2^{(4)}}\\
*+[F]{v_3^{(1)}}\ar[r]&*+[F]{v_3^{(2)}}\ar[r]&*+[F]{v_3^{(3)}}\ar[r]&*+[F]{v_3^{(4)}}\\
v_4^{(1)}\ar[r]&*+[F]{v_4^{(2)}}\ar[r]&*+[F]{v_4^{(3)}}\ar[r]&*+[F]{v_4^{(4)}}\\
v_5^{(1)}\ar[r]&v_5^{(2)}\ar[r]&v_5^{(3)}\ar[r]&v_5^{(4)}\\
v_6^{(1)}\ar[r]&v_6^{(2)}&&\\
v_7^{(1)}\ar[r]&v_7^{(2)}&&
}
&
\xymatrix@R=3pt@C=12pt{
&&&\\
R&&*+[F]{v_1^{(3)}}\ar[r]&*+[F]{v_1^{(4)}}\\
&&*+[F]{v_2^{(3)}}\ar[r]&*+[F]{v_2^{(4)}}\\
*+[F]{v_3^{(1)}}\ar[r]&*+[F]{v_3^{(2)}}\ar[r]&*+[F]{v_3^{(3)}}\ar[r]&*+[F]{v_3^{(4)}}\\
v_4^{(1)}\ar[r]&v_4^{(2)}\ar[r]&v_4^{(3)}\ar[r]&*+[F]{v_4^{(4)}}\\
v_5^{(1)}\ar[r]&v_5^{(2)}\ar[r]&v_5^{(3)}\ar[r]&v_5^{(4)}\\
v_6^{(1)}\ar[r]&*+[F]{v_6^{(2)}}&&\\
v_7^{(1)}\ar[r]&v_7^{(2)}&&
}
\\\hline
\end{tabular}
$$
The dimension of $\mathcal{C}(P)$ is computed by counting  for each generator of $P$, the number of vertices which lie below it and which are not in P: this is $4+3+2+1=10$. Similarly the dimension of $\mathcal{C}(R)$ is given by $4+1+2+2+1=10$. 
\end{example}
Formula \eqref{EquationsCells} implies that $\textrm{dim}\,\mathcal{C}(P)=\textrm{dim}\,\mathcal{C}(R)=d$ and hence both $\mathcal{I}_1:=\overline{\mathcal{C}(P)}$ and $\mathcal{I}_1:=\overline{\mathcal{C}(R)}$ are irreducible components of $X$. We consider the subrepresentation $Q$ generated by
$$
Q=\langle v_3^{(1)}, v_4^{(2)},\cdots, v_{i+1}^{(i-1)}, v_{n+2}^{(i)}, v_1^{(i+1)}, v_{i+2}^{(i+1)}, v_2^{(i+2)}, v_{i+3}^{(i+3)}\cdots, v_{n}^{(n)} \rangle_{\mathbb{C} Q}.
$$
\begin{example}\label{Ex:Q}
In our running example~\ref{Ex:Ma2}, the following is the subrepresentation $Q$:
$$
\begin{tabular}{|c|}
\hline
\xymatrix@R=3pt@C=12pt{
&&&\\
Q&&*+[F]{v_1^{(3)}}\ar[r]&*+[F]{v_1^{(4)}}\\
&&v_2^{(3)}\ar[r]&*+[F]{v_2^{(4)}}\\
*+[F]{v_3^{(1)}}\ar[r]&*+[F]{v_3^{(2)}}\ar[r]&*+[F]{v_3^{(3)}}\ar[r]&*+[F]{v_3^{(4)}}\\
v_4^{(1)}\ar[r]&v_4^{(2)}\ar[r]&*+[F]{v_4^{(3)}}\ar[r]&*+[F]{v_4^{(4)}}\\
v_5^{(1)}\ar[r]&v_5^{(2)}\ar[r]&v_5^{(3)}\ar[r]&v_5^{(4)}\\
v_6^{(1)}\ar[r]&*+[F]{v_6^{(2)}}&&\\
v_7^{(1)}\ar[r]&v_7^{(2)}&&
}\\\hline
\end{tabular}
$$
\end{example}
We notice that $Q$ is obtained from $P$ by  replacing  $v_{i+2}^{(i)}\mapsto v_{n+2}^{(i)}$ and by keeping all the other basis elements. Geometrically, this map represents  a ``positive'' $T$--fixed vector of the tangent space of $X$ at $P$, which is the direction of a 1--dimensional $T$--fixed subvariety of $X$ whose limit points are precisely $P$ and $Q$. Notice that by the tangent space formula, $T_P(X)\simeq \Hom_Q(P, M(\mathbf{a}^1)/P)$ this vector corresponds to the (unique up to scalars) non--zero homomorphism from $P_i$ to $I_i$.   Similarly, $Q$  is obtained from $R$ by $v_{2}^{(i+1)}\mapsto v_{i+2}^{(i+1)}$ which has the same geometric interpretation (this corresponds to a non--zero homomorphism from $P_{i+1}$ to $I_{i+1}$). In particular, $Q$ and all its attracting cell lies in $\mathcal{I}_1\cap \mathcal{I}_2$. 

It remains to show that $\textrm{dim }\mathcal{C}(Q)=d-1$. From the dimension formula \eqref{EquationsCells} we immediately get $\textrm{dim }\mathcal{C}(Q)=\textrm{dim }\mathcal{C}(P)-(n+2-i)+(n+1-i)=\textrm{dim }\mathcal{C}(P)-1$.

The rest of the proof follows from the fact that normality is preserved under deformations. In particular, the fiber over a point $M$ such that $M(\mathbf{a}^i)\Leq M$ is not normal. On the other hand, if $M$ lies in the irreducible flat locus, then it degenerates to $M^1$. Since the degenerate flag variety (which is the fiber over $M^1$) is normal, it follows that the fiber over $M$ is normal as well. 
\end{proof}

\subsection{Geometry of linear degenerations - the flat irreducible locus}\label{flatirr}

Since the degenerate flag variety ${\rm Fl}^{{\bf r}^1}(V)$ is the special fiber of $\pi:\pi^{-1}(U_{\rm flat,irr})\rightarrow U_{\rm flat,irr}$, we can conclude from Theorem \ref{ag}:

\begin{theorem} All linear degenerations of flag varieties over $U_{\rm flat, irr}$ are reduced irreducible normal local complete intersection varieties.
\end{theorem}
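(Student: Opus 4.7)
My plan is to combine three inputs: (i) the preceding theorem over the larger locus $U_{\rm flat}$, (ii) the definition of $U_{\rm flat,irr}$, and (iii) the openness results of Theorem \ref{ag}(2) applied at the minimal orbit $\mathcal{O}_{{\bf r}^1}$.

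First, since $U_{\rm flat,irr}\subset U_{\rm flat}$, the theorem at the beginning of this subsection (the ``reduced locally complete intersection'' statement) already gives, over $U_{\rm flat,irr}$, that every fiber ${\rm Fl}^{f_*}(V)$ is a reduced local complete intersection variety. Irreducibility is immediate from the very definition of $U_{\rm flat,irr}$. So only normality remains.

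For normality, the restriction $\pi\colon\pi^{-1}(U_{\rm flat,irr})\to U_{\rm flat,irr}$ is proper (as $\pi$ is projective) and flat (by definition of $U_{\rm flat,irr}$). Applying Theorem \ref{ag}(2), the subset
\[
U_{\rm norm}=\{f_*\in U_{\rm flat,irr}\mid {\rm Fl}^{f_*}(V)\text{ is normal}\}
\]
is open in $U_{\rm flat,irr}$. Since the fiber is $G$-equivariantly determined up to isomorphism, $U_{\rm norm}$ is also $G$-stable. Hence its complement $Z=U_{\rm flat,irr}\setminus U_{\rm norm}$ is a closed $G$-invariant subset of $U_{\rm flat,irr}$. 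The key point will be: by Theorem \ref{t2}(2), every $G$-orbit contained in $U_{\rm flat,irr}$ degenerates to $\mathcal{O}_{{\bf r}^1}$, so any non-empty closed $G$-invariant subset of $U_{\rm flat,irr}$ must contain $\mathcal{O}_{{\bf r}^1}$.

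Thus it suffices to verify that $\mathcal{O}_{{\bf r}^1}\subset U_{\rm norm}$, i.e.\ that the fiber ${\rm Fl}^{{\bf r}^1}(V)\cong(\SL_{n+1}/B)^a$ is normal. This has already been established in Section \ref{PBW}: Theorem \ref{Thm:flagSchubert} identifies ${\rm Fl}^{{\bf r}^1}(V)$ with the Schubert variety $X_{w_{\bi}}$ for $\bi=(1,2,\ldots,n-1)$, and the corollary at the end of that subsection records that such varieties are normal (in fact Cohen--Macaulay with rational singularities). Hence $Z$ is empty, so $U_{\rm norm}=U_{\rm flat,irr}$, completing the proof. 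I do not expect a serious obstacle: everything reduces to citing Theorem \ref{ag}(2), Theorem \ref{t2}(2), and the normality of the PBW degenerate flag variety already proved in Section \ref{PBW}. One could alternatively invoke the equivalence ``normal $\Leftrightarrow$ irreducible'' in the flat locus proved in Theorem \ref{normallocus}, which gives the same conclusion in a single step.
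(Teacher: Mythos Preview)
Your proof is correct and follows essentially the same approach as the paper: the paper's argument is the terse observation that ${\rm Fl}^{{\bf r}^1}(V)$ is the special fiber of $\pi$ over $U_{\rm flat,irr}$ and that one concludes from Theorem~\ref{ag}, which is precisely the openness-of-normality argument you spell out in detail (together with the Schubert-variety normality of the PBW degenerate flag variety from Section~\ref{PBW}). Your alternative remark via Theorem~\ref{normallocus} is also in line with the paper, which notes immediately afterward that the irreducible flat locus can be characterized as the normal locus inside the flat locus.
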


Moreover, we can alternatively characterize the irreducible flat locus as the open subset of the flat locus where the fibers are normal varieties; see Theorem \ref{normallocus}. We can also characterize the PBW locus inside the flat irreducible locus.

\begin{theorem} Inside $U_{\rm flat,irr}$, the locus $U_{\rm PBW}$ consists of those points whose fibers are Schubert quiver Grassmannians (see Remark~\ref{Rem:Schubert}).
\end{theorem}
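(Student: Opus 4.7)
The plan is to prove the two inclusions separately. The forward direction is essentially immediate from the computation already performed in Section~\ref{PBW}: for $\bi=(i_1,\dots,i_k)\in\mathcal{D}$, the fiber over a point of the corresponding orbit is
\[
M^\bi=P_1^{\oplus(n+1-k)}\oplus\bigoplus_{m=1}^k\bigl(I_{i_m}\oplus P_{i_m+1}\bigr),
\]
a direct sum of indecomposable projectives and injectives. By the characterization of catenoids in the equioriented type $A$ case recalled in Remark~\ref{Rem:Schubert}, $M^\bi$ is a catenoid and hence $\Gr_\mathbf{e}(M^\bi)$ is a Schubert quiver Grassmannian (this has in fact already been made explicit in Theorem~\ref{Thm:flagSchubert}).

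For the converse, I would take $M\in U_{\rm flat,irr}\setminus U_{\rm PBW}$ and produce an indecomposable summand of $M$ that is neither projective nor injective, which by the same characterization will prevent $\Gr_\mathbf{e}(M)$ from being Schubert. By Proposition~\ref{proprhyme} and Definition~\ref{defpbw}, I may assume $M$ is defined by a non-regular broken rhyme scheme $(b_1,\dots,b_{n-1})$, so that $f_l=\id_V$ when $b_l=0$ and $f_l=\pr_{b_l}$ otherwise. My main step is to exhibit the indecomposable decomposition of $M$ explicitly: for every $a\in\{1,\dots,n+1\}$ the line $\sspan(v_a)\subset V_l$ is stable under all the $f_l$ (it is sent identically when $b_l\neq a$ and to zero when $b_l=a$), so $M$ splits as $\bigoplus_{a=1}^{n+1}M^{(a)}$ into $n+1$ thin strands. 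Writing $S_a=\{l\,:\,b_l=a\}=\{l_1<\dots<l_s\}$, the strand $M^{(a)}$ is an identity chain severed exactly at each $l_i$, hence
\[
M^{(a)}\cong I_{l_1}\oplus U_{l_1+1,l_2}\oplus\cdots\oplus U_{l_{s-1}+1,l_s}\oplus P_{l_s+1}
\]
when $S_a\neq\emptyset$, and $M^{(a)}\cong U_{1,n}$ when $S_a=\emptyset$.

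Non-regularity of the broken rhyme scheme is equivalent to some $|S_a|\geq 2$, and for such $a$ the decomposition above produces a middle summand $U_{l_i+1,l_{i+1}}$ with $l_i+1\geq 2$ and $l_{i+1}\leq n-1$, which is neither projective (right endpoint $<n$) nor injective (left endpoint $>1$). Hence $M$ is not a direct sum of projectives and injectives, so not a catenoid, and Remark~\ref{Rem:Schubert} (via \cite{CFRSchubert}) rules out $\Gr_\mathbf{e}(M)$ being a Schubert quiver Grassmannian. The delicate point, as usual, will be the reverse implication of the catenoid/Schubert correspondence---that a non-catenoid cannot yield a Schubert quiver Grassmannian under the natural embedding---which I would import cleanly from \cite{CFRSchubert} rather than reprove here; once it is granted, the combinatorial strand decomposition above is routine.
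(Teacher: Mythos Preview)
Your proof is correct and follows the same strategy as the paper: invoke the main theorem of \cite{CFRSchubert} that $\Gr_\mathbf{e}(M)$ is a Schubert quiver Grassmannian if and only if $M$ is a catenoid, and then verify combinatorially---using the explicit representatives of the orbits from Proposition~\ref{proprhyme}---that $M$ is a catenoid precisely when the corresponding broken rhyme scheme is regular. The paper simply asserts that this verification is clear from Proposition~\ref{proprhyme}, whereas you spell it out via the strand decomposition $M=\bigoplus_{a=1}^{n+1}M^{(a)}$ and the resulting list of indecomposable summands; this is a welcome elaboration but not a different argument. One small remark: Remark~\ref{Rem:Schubert} only records the forward implication (projective $\oplus$ injective $\Rightarrow$ catenoid), so for the reverse direction you are right to point directly to \cite{CFRSchubert} for the characterization of catenoids in equioriented type $A$ rather than to the Remark alone.
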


\begin{proof} By the main result of \cite{CFRSchubert}, the image of the natural embedding of a quiver Grassmannian ${\rm Gr}_{\bf e}(M)$ into a flag manifold is a union of Schubert varieties if and only if $M$ is a catenoid (see \cite[Definition~1.1]{CFRSchubert} for the definition). By the explicit description of orbits in $U_{\rm flat,irr}$ in Proposition \ref{proprhyme}, we see that this holds true if and only if $M$ belongs to $U_{\rm PBW}$.
\end{proof}

Conjecturally, there is an alternative characterization of the PBW locus inside the flat irreducible locus:

\begin{conjecture} There exists a flat solvable group scheme $\Gamma\rightarrow U_{\rm flat,irr}$ acting on $\pi^{-1}(U_{\rm flat,irr})\rightarrow U_{\rm flat,irr}$ with a dense orbit.
\end{conjecture}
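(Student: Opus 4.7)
The plan is to mimic the construction of the unipotent group scheme $\Gamma_{\rm PBW}$ from the previous theorem, localising first on the transversal slice $T \subset U_{\rm flat,irr}$ introduced in Section~\ref{Ts}. Specifically, I would produce a closed subscheme $\Gamma \subset {\rm Aut}|_T$ of the automorphism group scheme whose fiber over ${\bf\lambda}=0 \in T$ recovers the semidirect product of the diagonal torus of $G$ with the unipotent fiber $\Gamma_{\rm PBW}(0)$, and whose fiber over a generic point of $T$ (lying in $U_{\rm iso}$) is conjugate inside $G$ to a standard Borel subgroup. Concretely, following the PBW recipe, one defines the matrix entries $(g_i({\bf\lambda}))_{p,q}$ as polynomial expressions in the coordinates $\lambda_{p,q}$ of $T$: lower triangular for $p>q$ with coefficients scaled by suitable products of the $\lambda_{p,q}$ (so as to be compatible with the varying linear maps $f_i$), together with an arbitrary diagonal torus part adjoined on each factor.

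For the group scheme structure, one verifies closure under composition by a direct matrix computation analogous to the unipotent case, and the result is solvable by construction (it is an extension of a torus by a unipotent part). Flatness over $T$ should follow because, after choosing a suitable set of free coordinates $(x_{p,q})$ together with the torus parameters, the scheme $\Gamma$ is realised as a trivial affine space bundle over $T$; its relative dimension should match $\binom{n+1}{2}+n$, the dimension of a Borel subgroup of $GL_{n+1}$. Once $\Gamma$ is built over $T$, one extends by transport of structure along the $G$-action to a flat solvable group scheme over $U_{\rm flat,irr}=G\cdot T$, using the fact that $T$ meets every $G$-orbit in $U_{\rm flat,irr}$.

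The hardest step is to verify that $\Gamma$ acts with an open dense orbit on every fiber of $\pi$ over $T$. At ${\bf\lambda}=0$ this is Theorem C, and at a generic point of $T$ this is the classical Bruhat decomposition. For intermediate points of $T$, the natural approach is via upper semi-continuity: the dimension of the $\Gamma$-stabiliser of the standard flag $(\langle v_1\rangle,\ldots,\langle v_1,\ldots,v_n\rangle)$ is upper semi-continuous in ${\bf\lambda}$, and at both extremes this stabiliser is zero-dimensional by the preceding observations. If one can show the stabiliser is zero-dimensional at \emph{every} ${\bf\lambda}\in T$ --- equivalently, that the differential of the orbit map at the standard flag surjects onto the tangent space of the fiber, which has dimension $\binom{n+1}{2}$ by Theorem B --- then the $\Gamma$-orbit of the standard flag is open of full dimension, hence dense in the (irreducible) fiber. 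This tangent-space calculation is the genuine obstacle, since the structure of ${\rm Aut}_Q(M)$ varies non-trivially with $M$ across $U_{\rm flat,irr}$; one strategy is to organise $T$ by the $G$-orbit stratification, along which $\pi$ is locally trivial, and argue stratum by stratum, reducing in each stratum to a finite check on a single representative quiver representation.
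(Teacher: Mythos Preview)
The statement you are attempting to prove is explicitly labelled a \emph{conjecture} in the paper; the authors do not claim a proof. They only exhibit, immediately after stating it, an explicit solvable group scheme verifying the conjecture in the single case $n=3$, by writing out three $4\times 4$ matrices $(g_1(\lambda),g_2(\lambda),g_3(\lambda))$ whose entries are polynomials in the slice parameters $\lambda_{11},\lambda_{12},\lambda_{22}$ and in free coordinates $x_{p,q}$.

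Your outline follows the same philosophy as that example --- work over the transversal slice $T$ and try to write down $\Gamma$ explicitly --- but there are two concrete difficulties you have underestimated. First, the shape you propose for $\Gamma$ (lower-triangular unipotent part scaled by the $\lambda$'s, plus a diagonal torus) is too restrictive: already for $n=3$ the matrices in the paper have genuinely \emph{upper}-triangular entries such as $\lambda_{12}^2 x_{32}$ and non-constant diagonal entries such as $1+2\lambda_{12}x_{32}$, mixing the off-diagonal slice parameters $\lambda_{ij}$ ($i<j$) with the unipotent coordinates. These off-diagonal $\lambda$'s parametrise exactly the non-PBW strata of $T$, so the ``PBW recipe'' from the preceding theorem does not extend in the naive way you suggest. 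Second, the passage from $T$ to all of $U_{\rm flat,irr}$ by ``transport of structure along the $G$-action'' is not automatic: $T$ is a slice, not a section of the quotient map, and the $G$-stabilisers of points in $T$ vary across orbits, so patching a group scheme over $T$ to one over $G\cdot T$ requires an equivariance argument you have not supplied.

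Your semicontinuity heuristic for the dense-orbit property is sensible, and you rightly identify the tangent-space check at non-PBW points as the real obstacle --- which is precisely why the statement remains open in the paper.
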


As an example, we consider the case $n=3$. We can work over the transversal slice $T$, which consists of all pairs $(f_1,f_2)$ of the form
$$f_1=\left(\begin{array}{cccc}1&0&0&0\\ 0&\lambda_{11}&\lambda_{12}&0\\ 0&0&1&0\\ 0&0&0&1\end{array}\right),\; f_2=\left(\begin{array}{cccc}1&0&0&0\\ 0&1&\lambda_{12}&0\\ 0&0&\lambda_{22}&0\\ 0&0&0&1\end{array}\right).$$
It consists of five orbits, given by the following equations:
\begin{enumerate}
\item $\lambda_{11},\lambda_{22}\not=0$,
\item $\lambda_{11}\not=0,\lambda_{22}=0$,
\item $\lambda_{11}=0,\lambda_{22}\not=0$,
\item $\lambda_{11}=\lambda_{22}=0,\lambda_{12}\not=0$,
\item $\lambda_{11}=\lambda_{22}=\lambda_{12}=0$.
\end{enumerate}
The PBW consists of all orbits except the fourth one.

The following solvable group scheme verifies the conjecture in this case. It is given by triples $(g_1(\lambda),g_2(\lambda),g_3(\lambda))$:
$$\left(\begin{array}{cccc}1&0&0&0\\ x_{21}&1&0&0\\ x_{31}&\lambda_{11}x_{32}&1+2\lambda_{12}x_{32}&0\\ x_{41}&\lambda_{11}x_{42}&\lambda_{22}x_{43}+2\lambda_{12}x_{42}&1\end{array}\right),$$
$$\left(\begin{array}{cccc}1&0&0&0\\ \lambda_{11}x_{21}+\lambda_{12}x_{31}&1+\lambda_{12}x_{32}&\lambda_{12}^2x_{32}&0\\
x_{31}&x_{32}&1+\lambda_{12}x_{32}&0\\ x_{41}&x_{42}&\lambda_{22}x_{43}+\lambda_{12}x_{42}&1\end{array}\right),$$
$$\left(\begin{array}{cccc}1&0&0&0\\ \lambda_{11}x_{21}+2\lambda_{12}x_{31}&1+2\lambda_{12}x_{32}&0&0\\
\lambda_{22}x_{31}&\lambda_{22}x_{32}&1&0\\ x_{41}&x_{42}&x_{43}&1\end{array}\right)$$


\begin{thebibliography}{99}
\bibitem{ASS} I.~Assem, D.~Simson, A.~Skowronski. \emph{Elements of the representation theory of associative
algebras. Vol. 1. Techniques of representation theory.} London Mathematical Society Student
Texts, 65. Cambridge University Press, Cambridge, 2006.
\bibitem{Bo} K. Bongartz. \emph{Minimal singularities for representations of Dynkin quivers}. Comment. Math. Helv. \textbf{69} (1994), no. 4, 575--611.
\bibitem{Bo2} K. Bongartz. \emph{On degenerations and extensions of finite dimensional modules}. Adv. Math. \textbf{121} (1996), 245--287.
\bibitem{CR} P.~Caldero, M.~Reineke. \emph{On the quiver Grassmannians in the acyclic case}. Journal of Pure and Applied Algebra \textbf{212} (2008), 2369--2380.
\bibitem{Cer} G.~Cerulli~Irelli. \emph{Quiver Grassmannians associated with string modules}. J.~Algebr.~Comb. \textbf{33} (2011), 259--276.
\bibitem{CFR1} G. Cerulli Irelli, E. Feigin, M. Reineke. \emph{Quiver Grassmannians and degenerate flag varieties}. Algebra Number Theory \textbf{6} (2012), no. 1, 165--194.
\bibitem{CFR2} G. Cerulli Irelli, E. Feigin, M. Reineke. \emph{Desingularization of quiver Grassmannians for Dynkin quivers}, Advances in Mathematics, 2013, no. 245, pp. 182--207.
\bibitem{CFRSchubert} G. Cerulli Irelli, E. Feigin, M. Reineke. \emph{Schubert quiver Grassmannians}. To appear in Algebras and Representation Theory (2016). arXiv:1508.00264.
\bibitem{CL} G. Cerulli Irelli, M. Lanini. \emph{Degenerate flag varieties of type A and C are Schubert varieties}. Internat. Math. Res. Notices (2014). arXiv:1403.2889.
\bibitem{CLL1} G.~Cerulli Irelli, M.~Lanini, and  P.~Littelmann. \emph{Degenerate flag varieties and Schubert varieties: a characteristic free approach}. Preprint 2015. arXiv:1502.04590.
\bibitem{CG} N. Chriss, V. Ginzburg.\emph{Representation theory and complex geometry}. Birkh\"auser Boston, Boston, 2010.
\bibitem{Feigin1} E.~Feigin. \emph{Degenerate flag varieties and the median Genocchi numbers}. Math. Res. Lett. \textbf{18} (2011), no. 6, 1163-1178.
\bibitem{Feigin2} E.~Feigin. \newblock {\em  $\mathbb{G}_a^M$ degeneration of flag varieties.} \newblock Selecta Math. (N.S.), \textbf{18}(3):513--537, 2012.
\bibitem{FF} E. Feigin, M. Finkelberg. \emph{Degenerate flag varieties of type A: Forbenius splitting and BW theorem}. Math. Z. 275 (2013), no. 1--2, 55--77.
\bibitem{FeFiLit} E.~Feigin, M.~Finkelberg, P.~Littelmann. \emph{Symplectic degenerate flag varieties}. Canad. J. Math. 66 (2014), no. 6, 1250-1286. 
\bibitem{FeFoLit} E.~Feigin, G.~Fourier, P.~Littelmann. \emph{PBW filtration and bases for irreducible modules in type ${\tt A}_n$}. Transform. Groups 16 (2011), no. 1, 71-89.
\bibitem{Fou1} G.~Fourier. \emph{PBW-degenerated Demazure modules and Schubert varieties for triangular elements}. Journal of Combinatorial Theory, Series A, DOI 10.1016/j.jcta.2015.12.001, 2015.
\bibitem{EGAIV} A. Grothendieck, \emph{El\'ements de g\'eom\'etrie alg\'ebrique. IV. \'Etude locale des sch\'emas et des morphismes de sch\'emas. III}. Inst. Hautes \'Etudes Sci. Publ. Math. No. 28 (1966).
\bibitem{Mat}  H. Matsumura, \emph{Commutative ring theory} Cambridge Studies in Advanced Mathematics, 8. Cambridge University Press, Cambridge, 1989.
\bibitem{MoeR} K. M\"ollenhoff, M. Reineke. \emph{Embeddings of representations}. Algebras and Representation Theory
August 2015, Volume 18, Issue 4, pp 977--987.
\bibitem{R_Mon} M.~Reineke. \emph{Monomials in canonical bases of quantum groups and quadratic forms}. J. Pure Appl. Algebra 157 (2001), no. 2--3, 301--309.
\bibitem{R} C.~M.~Ringel. \emph{The Catalan combinatorics of the hereditary artin algebras}. Preprint 2015. arXiv:1502.06553.
\bibitem{Rior} J. Riordan, \emph{A budget of rhyme scheme counts.} Second International Conference on Combinatorial Mathematics (New York, 1978), pp. 455--465,
Ann. New York Acad. Sci., 319, New York Acad. Sci., New York, 1979.
\bibitem{Scho} A. Schofield, \emph{General representations of quivers}. Proc. London Math. Soc. (3) 65 (1992). no. 1, 46--64. 




   
   
\end{thebibliography}
\end{document}